\DeclareMathOperator\supp{supp}
\newtheorem{lemma}{Lemma}[section]
\numberwithin{equation}{section}
\newtheorem{theorem}{Theorem}[section]
\newtheorem{proposition}[theorem]{Proposition}
\begin{document}
\title[Bilinear Bochner-Riesz Means on M\'etivier groups]{Bilinear Bochner-Riesz Means on M\'etivier groups}

\author[S. Bagchi, Md N. Molla, J. Singh]
{Sayan Bagchi \and Md Nurul Molla \and Joydwip Singh} 

\address[S. Bagchi]{Department of Mathematics and Statistics, Indian Institute of Science Education and Research Kolkata, Mohanpur--741246, West Bengal, India.}
\email{sayan.bagchi@iiserkol.ac.in}

\address[Md N. Molla]{Department of Mathematics and Statistics, Indian Institute of Science Education and Research Kolkata, Mohanpur--741246, West Bengal, India; Department of General Sciences, BITS Pilani Dubai Campus, International Academic City, Dubai, 345055, UAE}
\email{nurul.pdf@iiserkol.ac.in \& nurul@dubai.bits-pilani.ac.in}

\address[J. Singh]{Department of Mathematics and Statistics, Indian Institute of Science Education and Research Kolkata, Mohanpur--741246, West Bengal, India.}
\email{js20rs078@iiserkol.ac.in}

\subjclass[2020]{43A85, 22E25, 42B15}

\keywords{Bilinear Bochner-Riesz means, Sub-Laplacians, M\'etivier groups, Heisenberg type groups, Bilinear spectral multipliers}

\begin{abstract}
In this paper, we study the $L^{p_1}(G) \times L^{p_2}(G)$ to $L^{p}(G)$ boundedness of the bilinear Bochner-Riesz means associated with the sub-Laplacian on M\'etivier group $G$ under the H\"older's relation $1/p = 1/p_1 + 1/p_2$, $1\leq p_1, p_2 \leq \infty$. Our objective is to obtain boundedness results, analogous to the Euclidean setting, where the Euclidean dimension in the smoothness threshold is possibly replaced by the topological dimension of the underlying M\'etivier group $G$.
\end{abstract}

\maketitle

\section{Introduction}
\subsection{Bochner-Riesz on Euclidean spaces}
A central theme in harmonic analysis is understanding the convergence of Fourier series and integrals in Lebesgue spaces. The Bochner-Riesz mean plays a crucial role in this context, as it offers an approach to validating the Fourier inversion formula in the $L^p$ setting. For $R>0$, the Bochner-Riesz operator, denoted by $S^{\alpha}_R$ in $\mathbb{R}^n$ and of order $\alpha \geq 0$, is the Fourier multiplier operator defined by
\begin{align*}
    S^{\alpha}_R(f)(x) = \int_{\mathbb{R}^n} \left(1-\frac{|\xi|^2}{R^2}\right)_{+}^{\alpha} \widehat{f}(\xi)\ e^{2 \pi i x \cdot \xi} \ d\xi,
\end{align*}
where $(r)_{+} = \max\{r, 0\}$ for $r \in \mathbb{R}$, $f \in \mathcal{S}(\mathbb{R}^n)$, the space of all Schwartz class functions in $\mathbb{R}^n$. The famous Bochner-Riesz conjecture concerns finding the optimal range of the parameter $\alpha \geq 0$, for which the operator $S^{\alpha}_R$ are bounded in $L^p$-spaces. For $1\leq p \leq \infty$ and $p\neq 2$, it has been conjectured that the Bochner-Riesz means $S^{\alpha}_R$ is bounded on $L^p(\mathbb{R}^n)$ if and only if $\alpha> \alpha(p) := \max\left\{n\left|\frac{1}{p}-\frac{1}{2}\right|-\frac{1}{2}, 0 \right\}$. In 1972, Carleson and Sj\"olin \cite{Carleson_Sjolin_Multiplier_Problem_on_Disc_1972} proved that the conjecture is indeed true when $n=2$. Despite extensive research on the Bochner-Riesz problem, only partial results are known to be true, and in general it remains open for $n\geq 3$. For historical background and recent progress on the Bochner-Riesz conjecture, see \cite{Tao_Recent_Progress_Restriction_conjecture_2004}, \cite{Fefferman_Strongly_Singular_Convolution_Operator_1970}, \cite{Bourgain_Guth_Oscillatory_Integral_2011}, \cite{Lee_Improved_Bounds_Bochner_Riesz_Maximal_2004}, \cite{Tao_Vargas_Vega_Bilinear_Restriction_Kakeya_1998}, \cite{Tao_Vargas_Bilinear_approach_2000} and references therein.

One can also consider a bilinear generalization of the Bochner-Riesz operator, called the bilinear Bochner-Riesz operator. As in the linear setting, it is related to the convergence of the product of two $n$-dimensional Fourier series; see \cite{Bernicot_Grafakos_Song_Yan_Bilinear_Bochner_Riesz_2015} for more details. For $R>0$, the bilinear Bochner-Riesz operator $B^{\alpha}_R$ in $\mathbb{R}^n$, of order $\alpha \geq 0$ is defined by
\begin{align*}
    B^{\alpha}_R(f,g)(x) = \int_{\mathbb{R}^n} \int_{\mathbb{R}^n} \left(1-\frac{|\xi|^2+|\eta|^2}{R^2} \right)_{+}^{\alpha} \widehat{f}(\xi)\ \widehat{g}(\eta)\ e^{2 \pi i x \cdot (\xi + \eta)} \ d\xi \  d\eta,
\end{align*}
where $f, g \in \mathcal{S}(\mathbb{R}^n)$ and $\widehat{f}, \widehat{g}$ are their Fourier transforms. As the bilinear Bochner-Riesz operator is the obvious bilinear generalization of the linear Bochner-Riesz operator, it is therefore natural, just as in the linear case, to ask for the optimal range of the parameter $\alpha$, such that the corresponding bilinear Bochner-Riesz operator $B^{\alpha}_R$ is bounded from $L^{p_1}(\mathbb{R}^n) \times L^{p_2}(\mathbb{R}^n)$ to $L^{p}(\mathbb{R}^n)$ under the condition $1\leq p_1, p_2 \leq \infty$ and $1/p=1/p_1 + 1/p_2$.  This condition is often referred to as $(p_1, p_2,p)$ satisfies the H\"older's relation. Recently, several authors have investigated the convergence of $B^{\alpha}_R$ under this condition; see \cite{Bernicot_Germain_Bilinear_multilier_narrow_support_2013}, \cite{Bernicot_Grafakos_Song_Yan_Bilinear_Bochner_Riesz_2015}, \cite{Diestel_Grafakos_Ball_multiplier_problem_2007}, \cite{Jeong_Lee_Vargas_Bilinear_Bochner_Riesz_2018} and \cite{Liu_Wang_Bilinear_Bochner_Riesz_Non_Banach_2020}. For $n=1$, the problem has been nearly completely solved for the Banach triangle case, that is, when all $p_1, p_2, p \in [1, \infty]$ and $1/p=1/p_1 + 1/p_2$; see \cite[Theorem 4.1]{Bernicot_Grafakos_Song_Yan_Bilinear_Bochner_Riesz_2015} and \cite{Grafakos_Li_Disc_Multiplier_2006}. For the non-Banach range ($p <1$), some progress has been made, notably in \cite[Theorem 2.2]{Jotsaroop_Shrivastava_Maximal_Bochner_Riesz_2022}. When $n\geq 2$ and $\alpha>0$, Bernicot et al. addressed this problem in \cite{Bernicot_Grafakos_Song_Yan_Bilinear_Bochner_Riesz_2015}, establishing both positive and negative results for the bilinear Bochner-Riesz operator under the H\"older's relation. Following the work of \cite{Bernicot_Grafakos_Song_Yan_Bilinear_Bochner_Riesz_2015}, it was subsequently improved in two different regimes. In \cite{Jeong_Lee_Vargas_Bilinear_Bochner_Riesz_2018}, Jeong, Lee and Vargas studied the bilinear Bochner-Riesz problem. By introducing a new decomposition, they related the estimate of bilinear Bochner-Riesz operator to the product of square function estimate of the linear Bochner-Riesz operator, and from that they were able to improve the results of \cite{Bernicot_Grafakos_Song_Yan_Bilinear_Bochner_Riesz_2015} in certain ranges for the Banach triangle case, that is, when $2 \leq p_1, p_2 \leq \infty$ and $p \geq 1$. On the other hand, when $0<p < 1$,  Liu and Wang \cite{Liu_Wang_Bilinear_Bochner_Riesz_Non_Banach_2020} further improved the results of \cite{Bernicot_Grafakos_Song_Yan_Bilinear_Bochner_Riesz_2015} by obtaining a lower smoothness threshold $\alpha$. Specifically, they improved the range of $\alpha$ at the point $(p_1, p_2, p)=(1,2,2/3)$ and by symmetry at $(2,1,2/3)$. In fact, \cite{Bernicot_Grafakos_Song_Yan_Bilinear_Bochner_Riesz_2015} and \cite{Liu_Wang_Bilinear_Bochner_Riesz_Non_Banach_2020} obtained the following result.

\begin{theorem}\cite[Proposition 4.10, 4.11]{Bernicot_Grafakos_Song_Yan_Bilinear_Bochner_Riesz_2015}, \cite[Theorem 1.1]{Liu_Wang_Bilinear_Bochner_Riesz_Non_Banach_2020}
\label{Theorem: Euclidean bilinear Bochner-Riesz for Grafakos}
Let $n \geq 2$ and $1\leq p_1, p_2 \leq \infty$ with $1/p = 1/p_1 +1/p_2$. Then $B^{\alpha}_R$ is bounded from $L^{p_1}(\mathbb{R}^n) \times L^{p_2}(\mathbb{R}^n)$ to $L^{p}(\mathbb{R}^n)$ if $p_1, p_2, p$ and $\alpha$ satisfy one of the following conditions:
\begin{enumerate}
    \item (Region I) $2 \leq p_1, p_2 \leq \infty$, $1\leq p \leq 2$ and $\alpha> (n-1)(1-\frac{1}{p})$.
    \item (Region II) $2 \leq p_1, p_2, p \leq \infty$ and $\alpha> \frac{n-1}{2} + n(\frac{1}{2}-\frac{1}{p})$.
    \item (Region III) $2 \leq p_2 < \infty$, $1\leq p_1, p \leq 2$ and $\alpha> n(\frac{1}{2}-\frac{1}{p_2})-(1-\frac{1}{p})$.
    \item (Region III) $2 \leq p_1 < \infty$, $1\leq p_2, p \leq 2$ and $\alpha> n(\frac{1}{2}-\frac{1}{p_1})-(1-\frac{1}{p})$.
    \item (Region IV) $1\leq p_1 \leq 2 \leq p_2 \leq \infty$, $0<p<1$ and $\alpha> n(\frac{1}{p_1}-\frac{1}{2})$.
    \item (Region IV) $1\leq p_2 \leq 2 \leq p_1 \leq \infty$, $0<p<1$ and $\alpha> n(\frac{1}{p_2}-\frac{1}{2})$.
    \item (Region V) $1\leq p_1 \leq p_2 \leq 2$ and $\alpha> n(\frac{1}{p}-1)-(\frac{1}{p_2}-\frac{1}{2})$.
    \item (Region V) $1\leq p_2 \leq p_1 \leq 2$ and $\alpha> n(\frac{1}{p}-1)-(\frac{1}{p_1}-\frac{1}{2})$. 
\end{enumerate}
\end{theorem}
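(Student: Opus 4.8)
The plan is to follow the standard decomposition-and-summation scheme: dyadically split the multiplier near the sphere $\{|\xi|^2+|\eta|^2=1\}\subset\mathbb{R}^{2n}$, estimate each dyadic piece with an explicit loss $2^{j\sigma}$ depending on the region, cash in the gain $2^{-j\alpha}$ from the $\alpha$-th power, and sum a geometric series — which converges exactly under the stated strict inequalities. By the rescaling $f\mapsto f(R\,\cdot)$, $g\mapsto g(R\,\cdot)$ one reduces to $R=1$. Choosing $\phi\in C_c^\infty([1/2,2])$ with $\sum_{j\ge 1}\phi(2^{j}s)=1$ for $0<s\le1$ (plus a tail piece), one writes
\begin{equation*}
\bigl(1-|\xi|^2-|\eta|^2\bigr)_+^{\alpha}=\sum_{j\ge 0}m_j(\xi,\eta),\qquad m_j(\xi,\eta)=\bigl(1-|\xi|^2-|\eta|^2\bigr)_+^{\alpha}\,\phi\bigl(2^{j}(1-|\xi|^2-|\eta|^2)\bigr),
\end{equation*}
where $m_j$ is supported in the $2^{-j}$-slab $\{\,1-|\xi|^2-|\eta|^2\sim 2^{-j}\,\}$ and obeys $|\partial^{\beta}m_j|\lesssim_{\beta}2^{-j\alpha}2^{j|\beta|}$. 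It then suffices, in each region, to prove the single-scale bound $\bigl\|B^{m_j}(f,g)\bigr\|_{p}\lesssim_{\epsilon}2^{\,j(\sigma(n,p_1,p_2,p)-\alpha+\epsilon)}\|f\|_{p_1}\|g\|_{p_2}$, with $\sigma$ equal to the exponent appearing in the corresponding threshold on $\alpha$, and then sum in $j$.

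\textbf{Banach regions I--III.} For $2\le p_1,p_2<\infty$ I would use the square-function factorization of Jeong--Lee--Vargas. Insert a partition of unity in $|\xi|^2$ and $|\eta|^2$ at scale $2^{-j}$ so that $m_j=\sum_{a,b}m_j^{a,b}$, where $m_j^{a,b}$ is a product of a bump localized to $\bigl||\xi|^2-a\bigr|\lesssim 2^{-j}$ and a bump localized to $\bigl||\eta|^2-b\bigr|\lesssim 2^{-j}$ with $a+b\approx1$, and only $O(2^{j})$ pairs matter. Then $B^{m_j^{a,b}}(f,g)=\bigl(S_j^{a}f\bigr)\bigl(S_j^{b}g\bigr)$ up to rapidly decaying errors, where $S_j^{a}$ is a linear multiplier adapted to the annulus $\bigl||\xi|-\sqrt a\bigr|\lesssim 2^{-j}$. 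Cauchy--Schwarz in $(a,b)$ and Hölder in $x$ reduce the matter to the linear Bochner--Riesz square-function estimate $\bigl\|(\textstyle\sum_a|S_j^{a}h|^2)^{1/2}\bigr\|_q\lesssim_{\epsilon}2^{\,j(\gamma_n(q)+\epsilon)}\|h\|_q$ for $2\le q<\infty$, which is classical for $q=2,4$ (orthogonality and Córdoba), follows for $n=2$ from Carleson--Sjölin, and in general from Stein--Tomas plus interpolation; tracking $\gamma_n(p_1),\gamma_n(p_2)$ (with an extra Hölder factor when $p>2$) produces exactly the exponents of cases (1)--(4).

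\textbf{Non-Banach regions IV--V.} When $p<1$, Cauchy--Schwarz over the annuli is too lossy, so I would pass to the kernel $K_j=(m_j)^{\vee}$ on $\mathbb{R}^{2n}$, for which $B^{m_j}(f,g)(x)=\iint K_j(x-y,x-z)f(y)g(z)\,dy\,dz$, $K_j$ decaying like $2^{jn}$ times an inverse polynomial sharpened by the curvature of $\mathbb{S}^{2n-1}$. Splitting $m_j$ according to which of $|\xi|,|\eta|$ is larger, on the part where $|\eta|$ dominates $m_j$ is essentially a tensor product of a $2^{-j}$-thin linear Bochner--Riesz multiplier in $\eta$ with a smooth bump in $\xi$; one then bounds $B^{m_j}(f,g)$ by the $L^{p_2}\to L^{p_2}$ (for $p_2\le2$), respectively $L^p\to L^p$ (including the Hardy-space bounds for $p<1$), operator norm of a $2^{-j}$-thin linear Bochner--Riesz means — costing $2^{\,j(n(1/p_2-1/2)-1/2+\epsilon)}$, respectively $2^{\,j(n(1/p-1)-1/2+\epsilon)}$ — times a trivial estimate in the other factor, and sums the dyadic angular contributions. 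Optimizing this against $2^{-j\alpha}$ yields the thresholds of cases (5)--(8), the Liu--Wang value $n(1/p_1-1/2)$ at $(p_1,p_2,p)=(1,2,2/3)$ being recovered precisely by treating the cone $|\xi|=|\eta|$ without a wasteful cap-by-cap count.

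\textbf{Main obstacle.} The crux is the sharp single-scale linear input in each regime. In the Banach range it is the Bochner--Riesz square-function estimate with the correct exponent $\gamma_n(q)$ — this is exactly where the dimension enters every threshold, and since it is only partially known for $n\ge3$, the theorem is not expected to be optimal there. In the non-Banach range the delicate point is the interaction near $|\xi|=|\eta|$: a naive angular decomposition loses a power of $2^{j}$ per cap, and the sharp exponent requires either a finer kernel analysis exploiting that a $2^{-j}$-slab of $\mathbb{S}^{2n-1}$ meets only $O(2^{j/2})$ caps effectively, or a bilinear restriction estimate. This is the heart of the improvement of Liu--Wang over Bernicot--Grafakos--Song--Yan, and the part one must engineer most carefully.
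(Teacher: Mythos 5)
You should note at the outset that the paper does not prove this statement at all: Theorem \ref{Theorem: Euclidean bilinear Bochner-Riesz for Grafakos} is quoted from \cite{Bernicot_Grafakos_Song_Yan_Bilinear_Bochner_Riesz_2015} and \cite{Liu_Wang_Bilinear_Bochner_Riesz_Non_Banach_2020} and serves only as the Euclidean benchmark for Theorems \ref{Bilinear Bochner-Riesz Main theorem} and \ref{Bilinear Bochner-Riesz theorem with restricted f and g}. So there is no internal proof to compare with; your proposal has to be measured against those references, whose overall scheme (dyadic decomposition of the multiplier into $2^{-j}$-slabs, a single-scale estimate with loss $2^{j\sigma}$, and geometric summation against the gain $2^{-j\alpha}$) your outline does reproduce at the level of a roadmap.

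The decisive steps, however, are asserted rather than proved, and this is where genuine gaps lie. In the Banach regions you invoke the Jeong--Lee--Vargas factorization \cite{Jeong_Lee_Vargas_Bilinear_Bochner_Riesz_2018} and claim that ``tracking $\gamma_n(p_1),\gamma_n(p_2)$ \dots produces exactly the exponents of cases (1)--(4)'', but this bookkeeping is never carried out; moreover the square-function route naturally yields JLV-type thresholds, which do not coincide with the BGSY exponents stated here throughout regions I--III, so matching (1)--(4) requires an actual computation with the classical (unconditional) square-function bounds, not just a pointer to Stein--Tomas and interpolation. (The proof in \cite{Bernicot_Grafakos_Song_Yan_Bilinear_Bochner_Riesz_2015} instead expands each slab piece into a Fourier series of tensor products of linear multipliers, which is exactly the device the present paper transplants in \eqref{Fourier series decomposition}.) More seriously, in regions IV--V your reduction ``bound $B^{m_j}$ by the $L^p\to L^p$ (including Hardy-space, $p<1$) norm of a $2^{-j}$-thin linear Bochner--Riesz multiplier times a trivial estimate in the other factor'' does not parse as stated: the inputs lie only in $L^{p_1}\times L^{p_2}$ with $p_1,p_2\ge 1$ while the target is $L^p$ with $p<1$, so no linear $H^p\to L^p$ bound applies directly, and one must instead argue through kernel estimates of the product as in \cite{Bernicot_Grafakos_Song_Yan_Bilinear_Bochner_Riesz_2015}. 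Finally, the assertion that the sharp value $n(1/p_1-1/2)$ at $(1,2,2/3)$ is ``recovered precisely by treating the cone $|\xi|=|\eta|$ without a wasteful cap-by-cap count'' names the crux --- which is precisely the content of \cite{Liu_Wang_Bilinear_Bochner_Riesz_Non_Banach_2020} --- without supplying any argument for it; the heuristic that a $2^{-j}$-slab of $\mathbb{S}^{2n-1}$ meets only $O(2^{j/2})$ caps does not by itself produce that exponent. As it stands, the proposal is a plausible summary of the known proofs with the two key single-scale estimates missing.
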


\begin{figure}[!ht]
\begin{centering}
\definecolor{qqqqff}{rgb}{0,0,1}
\begin{tikzpicture}[line cap=round,line join=round,x=0.8cm,y=0.8cm]
\draw (6,6)-- (0,6);
\draw (6,0)-- (6,6);
\draw (0,3)-- (6,3);
\draw [->] (0,0) -- (8,0);
\draw [->] (0,0) -- (0,7.5);
\draw (3,6)-- (3,3);
\draw (0,6)-- (6,0);
\draw (3,3)-- (3,0);
\draw (3,3)-- (6,6);
\draw (0,3)-- (3,0);
\begin{scriptsize}
\fill [color=qqqqff] (0,0) circle (1.5pt);
\draw[color=qqqqff] (1,-.5) node {$\alpha>n-\frac{1}{2}$};
\fill [color=qqqqff] (0,6) circle (1.5pt);
\draw[color=qqqqff] (0.65,6.5) node {$\alpha > \frac{n}{2}$};
\fill [color=qqqqff] (6,6) circle (1.5pt);
\draw[color=qqqqff] (6.19,6.5) node {$\alpha>n-\frac{1}{2}$};
\fill [color=qqqqff] (6,0) circle (1.5pt);
\draw[color=qqqqff] (6.7,0.4) node {$\alpha>\frac{n}{2}$};
\fill [color=qqqqff] (3,0) circle (1.5pt);
\draw[color=qqqqff] (4.2,-0.5) node {$\alpha>\frac{n-1}{2}$};
\fill [color=qqqqff] (3,6) circle (1.5pt);
\draw[color=qqqqff] (3.3,6.5) node {$\alpha>\frac{n}{2}$};
\fill [color=qqqqff] (0,3) circle (1.5pt);
\draw[color=qqqqff] (-1,3.6) node {$\alpha>\frac{n-1}{2}$};
\fill [color=qqqqff] (3,3) circle (1.5pt);
\draw[color=qqqqff] (4,3.33) node {$\alpha>0$};
\fill [color=qqqqff] (6,3) circle (1.5pt);
\draw[color=qqqqff] (6.7,2.8) node {$\alpha>\frac{n}{2}$};
\fill [color=qqqqff] (6,-0.5) circle (0pt);
\draw[color=qqqqff] (6.0,-0.38) node {$1$};
\fill [color=qqqqff] (3,-0.5) circle (0pt);
\draw[color=qqqqff] (2.9,-0.48) node {$\frac{1}{2}$};
\fill [color=qqqqff] (-0.25,-0.42) circle (0pt);
\draw[color=qqqqff] (-0.25,-0.28) node {$O$};
\fill [color=qqqqff] (-0.5,3) circle (0pt);
\draw[color=qqqqff] (-0.38,3.0) node {$\frac{1}{2}$};
\fill [color=qqqqff] (-0.5,6) circle (0pt);
\draw[color=qqqqff] (-0.38,6.0) node {$1$};
\fill [color=qqqqff] (8.5,-0.5) circle (0pt);
\draw[color=qqqqff] (8.5,-0.16) node {$\frac{1}{p_1}$};
\fill [color=qqqqff] (-0.5,8.5) circle (0pt);
\draw[color=qqqqff] (-0.58,7.5) node {$\frac{1}{p_2}$};
\draw[color=qqqqff] (2.3,2.1) node {$I$};
\draw[color=qqqqff] (1.0,1.0) node {$II$};
\draw[color=qqqqff] (4.1,1.0) node {$III$};
\draw[color=qqqqff] (1.0,4.1) node {$III$};
\draw[color=qqqqff] (4.7,2.1) node {$IV$};
\draw[color=qqqqff] (2.3,4.7) node {$IV$};
\draw[color=qqqqff] (3.7,4.7) node {$V$};
\draw[color=qqqqff] (5,4) node {$V$};
\end{scriptsize}
\end{tikzpicture}
        \caption{Here $O=(0,0)$, and $\alpha>\alpha(p_1, p_2)$ represents that $B^{\alpha}_R$ is bounded on $L^{p_1}(\mathbb{R}^n) \times L^{p_2}(\mathbb{R}^n) \to L^p(\mathbb{R}^n)$ for $\alpha>\alpha(p_1, p_2)$ (see Theorem \ref{Theorem: Euclidean bilinear Bochner-Riesz for Grafakos}).}
\end{centering}        
\end{figure}

\subsection{Bochner-Riesz beyond Euclidean spaces}
Considerable attention has been paid to the boundedness of Bochner-Riesz means and more generally for multipliers in non-Euclidean frameworks as well. For the boundedness of Bochner-Riesz means related to the Hermite operator, see \cite{Thangavelu_Summability_hermite_Expansion_1989} and \cite{Karadzhov_Riesz_Summability_Hermite_1994}. For the sub-Laplacians on the Heisenberg groups, one can refer to \cite{Thangavelu_Riesz_Means_Heisenberh_Group_1990}, for the sharper result with mixed norm, see \cite{Muller_Riesz_Means_Kohn_Laplacian_1989}, and for multiplier related result, see \cite{Muller_Stein_Spectral_Multiplier_Heisenberg_Related_groups_1994}, \cite{Hebisch_Spectral_Multiplier_Heisenberg_1993}, \cite{Mauceri_weyl_transform_1980}, \cite{Lin_Mult_Heisenberg_group_1995} and \cite{Bagchi_Fourier_Mult_Heisenberg_2021}. 

Beyond the Heisenberg group, extensive research has also been conducted. Let $L$ be the sub-Laplacian on any stratified Lie group $G$ with homogeneous dimension $Q$. In 1991, Christ \cite{Christ_Spectral_multiplier_Nilpotent_groups_1991} and independently Mauceri and Meda \cite{Mauceri_Meda_Multipliers_Stratified_groups_1990}, established the $L^p$-boundedness of spectral multiplier for $L$ under Mihlin-H\"ormander type condition with order of differentiability $s>Q/2$. In particular, these results imply that the Bochner-Riesz means $(1-tL)_{+}^{\alpha}$ is bounded on $L^p(G)$, $1<p<\infty$, provided $\alpha>(Q-1)/2$. However, for stratified Lie groups with step bigger than one, in general, the homogeneous dimension $Q$ is always strictly bigger than the topological dimension $d$ of $G$. At that time, it was not known whether these result was sharp or not. The first surprise came when, for sub-Laplacian on the Heisenberg (type) groups, M\"uller and Stein \cite{Muller_Stein_Spectral_Multiplier_Heisenberg_Related_groups_1994}, and independently Hebisch \cite{Hebisch_Spectral_Multiplier_Heisenberg_1993}, showed that the above Mihlin-H\"ormander type multiplier result is not sharp. They showed that the previously known threshold  $Q/2$ can be replaced with $d/2$, where $d$ is the topological dimension of Heisenberg (type) groups. In particular, this refinement also improved the result of Bochner-Riesz multiplier by lowering the required smoothness threshold from $(Q-1)/2$ to $(d-1)/2$, and this improvement turned out to be sharp (see \cite{Martini_Muller_spectral_mult_two_step_2016}). Following these discoveries, there has been extensive research on determining the sharp threshold  in Mihlin-H\"ormander type result for various sub-Laplacians in many different settings. Such improvements have been established  for certain classes of two-step stratified Lie groups,  for  instance, Heisenberg-Reiter type groups \cite{Martini_Spectral_Multiplier_Heisenberg_Reiter_2015}, M\'etivier groups, and more generally, for Lie groups of polynomial growth \cite{Martini_Lie_groups_Polynomial_Growth_2012}, as well as two-step stratified groups with lower dimensions \cite{Martini_Muller_New_Class_Two_Step_Stratified_Groups_2014}. These sharp spectral multiplier results also yield sharp Bochner-Riesz multiplier result with the critical index $(d-1)/2$. However, it remains an open question whether the smoothness threshold in Mihlin-H\"ormander type condition $s>d/2$ is sufficient or not for boundedness of spectral multiplier on all two step stratified Lie groups (see \cite{Martini_Muller_spectral_mult_two_step_2016}). For related results in different settings, one can consult \cite{Martini_Sikora_Grushin_Weighted_Plancherel_2012}, \cite{Martini_Muller_Sharp_Multiplier_Grushin_2014}, \cite{Ahrens_Cowling_Martini_Muller_Quaternionic_Sphere_2020}, \cite{Casarino_Ciatti_Martini_Grushin_Sphere_2019}, \cite{Cowling_Klima_Sikora_Kohn_Laplacian_On_Sphere_2011} and references therein. A somewhat different problem concerning the $p$-specific boundedness of Bochner-Riesz means, that is, boundedness of Bochner-Riesz operator for $0<\alpha\leq (d-1)/2$, in the context of Heisenberg type groups, or more generally on M\'etivier groups has recently been studied by Niedorf in \cite{Niedorf_p_specific_Heisenber_group_2024}, \cite{Niedorf_Metivier_group_2023}.

There are also studies about the boundedness of bilinear Bochner-Riesz means beyond Euclidean spaces, such as for sub-Laplacians on the Heisenberg group \cite{Liu_Wang_Reasz_Means_Heisenberg_Group}, Heisenberg-type groups \cite{Wang_Wang_Riesz_means_Htype_groups_2024}. It is worth noting that in all the results by \cite{Liu_Wang_Reasz_Means_Heisenberg_Group} and \cite{Wang_Wang_Riesz_means_Htype_groups_2024}, the smoothness threshold $\alpha(p_1, p_2)$ is expressed in terms of the homogeneous dimension $Q$ of the underlying space. However, as observed earlier, in the linear setting for stratified Lie groups with step greater than one, the boundedness of spectral multipliers or Bochner-Riesz multipliers with smoothness threshold expressed in terms of the homogeneous dimension are generally not sharp. In certain cases, the smoothness threshold can be further refined and expressed in terms of the topological dimension $d$. This suggests that, analogous to the linear theory, one may expect the boundedness of the bilinear Bochner–Riesz operator to also hold with the smoothness threshold $\alpha(p_1, p_2)$ expressed in terms of $d$ rather than $Q$. 

Motivated by this perspective, our goal in this paper is to establish the boundedness of bilinear Bochner-Riesz operator associated with the sub-Laplacians on the M\'etivier groups $G$,  a class that strictly contains the Heisenberg type groups (see \cite{Muller_Seeger_Nilpotent_Lie_group_2004}). Furthermore, we aim to express the smoothness threshold $\alpha(p_1, p_2)$ in terms of the topological dimension $d$ of $G$. Our result applies to both Banach and non-Banach triangle cases, where $(p_1, p_2, p)$ satisfies H\"older's relation, that is $1/p=1/p_1+1/p_2$ with $1\leq p_1, p_2 \leq \infty$.


\subsection{Sub-Laplacian on M\'etivier groups}
Let $G$ be a connected, simply connected, two-step nilpotent Lie group with Lie algebra $\mathfrak{g}$, such that $\mathfrak{g} = \mathfrak{g}_1 \oplus \mathfrak{g}_2$ with $[\mathfrak{g}_1, \mathfrak{g}_1]=\mathfrak{g}_2$ and $[\mathfrak{g}, \mathfrak{g}_2]=\{0\}$. We refer to $\mathfrak{g}_1, \mathfrak{g}_2$ as first layer and second layer respectively. Let $d_1= \text{dim}\ \mathfrak{g}_1$, $d_2= \text{dim}\ \mathfrak{g}_2$ and $d=d_1+d_2$. Suppose $X_1, \ldots, X_{d_1}$ is a basis of $\mathfrak{g}_1$ and $T_1, \ldots, T_{d_2}$ is a basis of $\mathfrak{g}_2$. Also, there is an inner product $\langle \cdot, \cdot \rangle$ on $\mathfrak{g}$, so that the basis $X_1, \ldots, X_{d_1}, T_1, \ldots, T_{d_2}$ becomes an orthonormal basis. This inner product $\langle \cdot, \cdot \rangle$ induces a norm on $\mathfrak{g}_2^{*}$, the dual of $\mathfrak{g}_2$, which we denoted by $|\cdot|$. Then for any $\lambda \in \mathfrak{g}_2^{*}$, there is a skew-symmetric endomorphism $J_{\lambda}$ on $\mathfrak{g}_1$ such that 
\begin{align*}
    \lambda([x, x']) = \langle J_{\lambda} x, x' \rangle, \quad \text{for all}\ \ x, x' \in \mathfrak{g}_1 .
\end{align*}
Consequently, $G$ is said to be a M\'etivier group if and only if $J_{\lambda}$ is invertible for all $\lambda \in \mathfrak{g}_2^{*} \setminus \{0\}$. In addition, if $J_{\lambda}$ satisfies $J_{\lambda}^2 = -|\lambda|^2 \ \text{id}_{\mathfrak{g}_1}$ for all $\lambda \in \mathfrak{g}_2^{*}$, the group $G$ is called a Heisenberg-type group. Therefore, the class of M\'etivier groups is larger than the class of Heisenberg-type groups; in fact the containment is strict (see \cite{Muller_Seeger_Nilpotent_Lie_group_2004}).
Since $G$ is a simply connected nilpotent Lie group, the exponential map $\exp : \mathfrak{g} \to G$ is a global diffeomorphism, and therefore $G$ can be identified with its Lie algebra $\mathfrak{g}$, which in turn can be identified with $\mathbb{R}^{d_1} \times \mathbb{R}^{d_2}$, via the chosen basis of $\mathfrak{g}$. On $G$, the Haar measure coincides with the Lebesgue measure on $\mathfrak{g}$ and the group operation is given by
\begin{align*}
    (x,u) (x', u') &= \left(x+x', u+u'+\tfrac{1}{2}[x,x'] \right), \quad x, x' \in \mathfrak{g}_1, \ u, u' \in \mathfrak{g}_2 .
\end{align*}
In this paper, we will always assume that $G$ is a M\'etivier group, unless otherwise specified. The sub-Laplacian $\mathcal{L}$, generated by the first-layer vector fields $X_1, \ldots, X_{d_1}$, is defined by
\begin{align*}
    \mathcal{L} = -(X_1^2 + \cdots + X_{d_1}^2) .
\end{align*}
Then $\mathcal{L}$ is positive and essentially self-adjoint on $L^2(G)$. Consequently, the spectral theorem allows us to define the functional calculus for $\mathcal{L}$; that is, for every bounded Borel measurable function $F: \mathbb{R} \to \mathbb{C}$, the spectral multiplier operator $F(\mathcal{L})$ is bounded on $L^2(G)$.

\subsection{Bilinear Bochner-Riesz means associated to \texorpdfstring{$\mathcal{L}$}{}}
\label{subsection: Bilinear Bochner-Riesz means}
The spectral decomposition of $\mathcal{L}$ has been well studied in the literature; see, for example, \cite{Niedorf_Metivier_group_2023}. For $f\in L^1(G)$ and $\lambda \in \mathfrak{g}_2^{*}$, we define the Fourier transform of $f$ along the central variable by 
\begin{align}
\label{Definition: Fourier transform along central variable}
    \mathcal{F}_2 f(x, \lambda) := f^{\lambda}(x) &= \int_{\mathfrak{g}_2} f(x,u)\, e^{- i \langle \lambda, u \rangle} \ du , \quad x \in \mathfrak{g}_1.
\end{align}
We define the $\lambda$-twisted convolution of two functions $\phi, \psi \in \mathcal{S}(\mathfrak{g}_1)$ by
\begin{align}
\label{definition: twisted convolution}
    \phi \times_{\lambda} \psi(x) &= \int_{\mathfrak{g}_1} \phi(x') \psi(x-x') e^{\frac{i}{2}\lambda([x, x'])} \ dx' , \quad x \in \mathfrak{g}_1 .
\end{align}

Let $\Lambda \in \mathbb{N}$, $\mathbf{b}=(b_1, \ldots, b_{\Lambda}) \in (0, \infty)^\Lambda$, $\mathbf{r}=(r_1, \ldots, r_\Lambda) \in \mathbb{N}^\Lambda$, $\mathbf{k}=(k_1, \ldots, k_\Lambda) \in \mathbb{N}_0^\Lambda$. We define the $(\mathbf{b}, \mathbf{r})$-rescaled Laguerre functions $\varphi_{\mathbf{k}}^{\mathbf{b}, \mathbf{r}}$ by setting
\begin{align*}
    \varphi_{\mathbf{k}}^{\mathbf{b}, \mathbf{r}} &= \varphi_{k_1}^{(b_1, r_1)} \otimes \cdots \otimes \varphi_{k_\Lambda}^{(b_\Lambda, r_\Lambda)},
\end{align*}
where $\varphi_{k}^{(\mu, m)}(z) = \mu^m L^{m-1}_k(\tfrac{1}{2}\mu |z|^2) e^{-\frac{1}{2}\mu |z|^2}$ for $z \in \mathbb{R}^{2m}$, $\mu>0$ is the $\mu$-rescaled Laguerre function and $L^{m-1}_k$ denotes the $k$-th Laguerre polynomial of type $m-1$.
    
Let $f \in \mathcal{S}(G)$. Then, for any Schwartz class functions $F: \mathbb{R} \to \mathbb{C}$, from \cite[Proposition 3.10]{Niedorf_Restriction_Stratified_group_2025}, the operator $F(\mathcal{L})$ is given by
\begin{align}
\label{Functional calculus for sub Laplacian}
    F(\mathcal{L})f(x,u) &= \frac{1}{(2\pi)^{d_2}} \int_{\mathfrak{g}_{2,r}^{*}} \sum_{\mathbf{k} \in \mathbb{N}^\Lambda} F(\eta_{\mathbf{k}}^{\lambda}) \left[f^{\lambda} \times_{\lambda} \varphi_{\mathbf{k}}^{\mathbf{b}^{\lambda}, \mathbf{r}}(R_{\lambda}^{-1}\cdot) \right](x) \ e^{i \langle \lambda, u \rangle} \ d\lambda ,
\end{align}
where $\eta_{\mathbf{k}}^{\lambda} := \eta_{\mathbf{k}}^{\mathbf{b}^{\lambda}, \mathbf{r}} = \displaystyle{\sum_{n=1}^{\Lambda} (2 k_n + r_n) b_n^{\lambda}}$, the functions $\lambda \to R_{\lambda}$ are Borel measurable on $\mathfrak{g}_{2,r}^{*}$ and $\mathfrak{g}_{2,r}^{*}$ is the Zariski open subset of $\mathfrak{g}_{2}^{*}$.

In particular for $\alpha \geq 0$, if we take $F(\eta)=(1-\eta)_{+}^{\alpha}$, then it is easy to verify that the expression of $F(\mathcal{L})$ given above is well defined. For $R>0$, we define the Bochner-Riesz operator associated with the sub-Laplacian $\mathcal{L}$ on the M\'etivier groups by
\begin{align*}
    S_R^{\alpha}(\mathcal{L})f(x,u) &= \frac{1}{(2\pi)^{d_2}} \int_{\mathfrak{g}_{2,r}^{*}} \sum_{\mathbf{k} \in \mathbb{N}^\Lambda} \left(1-\frac{\eta_{\mathbf{k}}^{\lambda}}{R} \right)_{+}^{\alpha} \left[f^{\lambda} \times_{\lambda} \varphi_{\mathbf{k}}^{\mathbf{b}^{\lambda}, \mathbf{r}}(R_{\lambda}^{-1}\cdot) \right](x) \, e^{i \langle \lambda, u \rangle} \, d\lambda .
\end{align*}

Correspondingly, for $f, g \in \mathcal{S}(G)$, the bilinear Bochner-Riesz operator associated to the sub-Laplacian $\mathcal{L}$, denoted by $\mathcal{B}_R^{\alpha}$ and is defined as
\begin{align}
\label{Bilinear Bochner-Riesz menas}
    \mathcal{B}_R^{\alpha}(f,g)(x,u) &= \frac{1}{(2\pi)^{2 d_2}} \int_{\mathfrak{g}_{2,r}^{*}} \int_{\mathfrak{g}_{2,r}^{*}} e^{i \langle \lambda_1 + \lambda_2 , u \rangle} \sum_{\mathbf{k}_1, \mathbf{k}_2 \in \mathbb{N}^\Lambda} \left(1-\frac{\eta_{\mathbf{k}_1}^{\lambda_1}+ \eta_{\mathbf{k}_2}^{\lambda_2}}{R} \right)_{+}^{\alpha} \\
    &\nonumber \hspace{1cm} \left[f^{\lambda_1} \times_{\lambda_1} \varphi_{\mathbf{k}_1}^{\mathbf{b}^{\lambda_1}, \mathbf{r}_1}(R_{\lambda_1}^{-1}\cdot) \right](x) \left[g^{\lambda_2} \times_{\lambda_2} \varphi_{\mathbf{k}_2}^{\mathbf{b}^{\lambda_2}, \mathbf{r}_2}(R_{\lambda_2}^{-1}\cdot) \right](x) \,  d\lambda_1 \, d\lambda_2 .
\end{align}

\subsection{Statement of the main result}We are concerned with the following estimate: for any $R>0$, whenever $\alpha>\alpha(p_1, p_2)$ for some $\alpha(p_1, p_2) \geq 0$, then we have
\begin{align}
\label{Boundedness of Bochner-Ries with R}
    \|\mathcal{B}_R^{\alpha}(f,g)\|_{L^p(G)} &\leq C \|f\|_{L^{p_1}(G)} \|g\|_{L^{p_2}(G)} ,
\end{align}
for all $f, g \in \mathcal{S}(G)$, where $1\leq p_1, p_2 \leq \infty$ and $1/p=1/p_1 + 1/p_2$, with the constant $C>0$ independent of $R$. In this article, we aim to determine the smoothness threshold $\alpha(p_1, p_2)$
analogous to the smoothness threshold of the Euclidean bilinear Bochner-Riesz means from \cite{Bernicot_Grafakos_Song_Yan_Bilinear_Bochner_Riesz_2015}, \cite{Liu_Wang_Bilinear_Bochner_Riesz_Non_Banach_2020}, where the Euclidean dimension is replaced by the topological dimension of the group. 

On $G$, we have the family of non-isotropic dilation $\{\delta_t\}_{t>0}$ defined by $\delta_t(x, u)= (tx, t^2u)$ (see \eqref{Definition: Dilation in Metivier groups}). It is then straightforward to check that
\begin{align*}
    \mathcal{B}_{t^{-2}R}^{\alpha}(f,g)(x,u) = \delta_{t^{-1}} \circ \mathcal{B}_R^{\alpha} (\delta_t f,\delta_t g)(x,u) .
\end{align*}

In view of the above relation, to study the $L^{p_1}(G) \times L^{p_2}(G) \to L^p(G)$ boundedness of $\mathcal{B}_R^{\alpha}$, it is enough to consider the case $R=1$. When $R=1$, we simply write $\mathcal{B}_1^{\alpha}$ as $\mathcal{B}^{\alpha}$.

The following is our first main result in this direction.
\begin{theorem}
\label{Bilinear Bochner-Riesz Main theorem}
Let $1\leq p_1, p_2 \leq \infty$ with $1/p = 1/p_1 +1/p_2$. Then $\mathcal{B}^{\alpha}$ is bounded from $L^{p_1}(G) \times L^{p_2}(G)$ to $L^{p}(G)$, if $p_1, p_2, p$ and $\alpha> \alpha(p_1, p_2)$ satisfy one of the following conditions:
\begin{enumerate}
    \item (Region I) $2 \leq p_1, p_2 \leq \infty$, $1\leq p \leq 2$ and $\alpha(p_1, p_2)= (d-1)(1-\frac{1}{p})$.
    \item (Region II) $2 \leq p_1, p_2, p \leq \infty$ and $\alpha(p_1, p_2)= \frac{d-1}{2} + d(\frac{1}{2}-\frac{1}{p})$.
    \item (Region III) $2 \leq p_2 \leq \infty$, $1\leq p_1, p \leq 2$ and $\alpha(p_1, p_2)= Q(\frac{1}{p_1}-\frac{1}{2})+(d-1)(1-\frac{1}{p})$.
    \item (Region III) $2 \leq p_1 \leq \infty$, $1\leq p_2, p \leq 2$ and $\alpha(p_1, p_2)= Q(\frac{1}{p_2}-\frac{1}{2})+(d-1)(1-\frac{1}{p})$.
    \item (Region IV) $1\leq p_1 \leq 2 \leq p_2 \leq \infty$, $0<p\leq 1$ and $\alpha(p_1, p_2)= (d+1)(\frac{1}{p}-1)+Q(\frac{1}{2}-\frac{1}{p_2})$.
    \item (Region IV) $1\leq p_2 \leq 2 \leq p_1 \leq \infty$, $0<p \leq 1$ and $\alpha(p_1, p_2)= (d+1)(\frac{1}{p}-1)+Q(\frac{1}{2}-\frac{1}{p_1})$.
    \item (Region V) $1\leq p_1, p_2 \leq 2$ and $\alpha(p_1, p_2)= (d+1)(\frac{1}{p}-1)$.
\end{enumerate}
\end{theorem}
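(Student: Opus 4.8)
The plan is to reduce the bilinear operator $\mathcal{B}^{\alpha}$ to a superposition of products of \emph{linear} operators, and then feed in the known sharp spectral multiplier / Bochner--Riesz estimates for the sub-Laplacian $\mathcal{L}$ on M\'etivier groups (with the topological-dimension threshold $(d-1)/2$) together with elementary $L^p$-improving and kernel estimates. First I would decompose the bilinear multiplier symbol $(1-(s+t)/R)_+^\alpha$ on the joint spectrum. Writing $R=1$ (legitimate by the dilation relation already noted), I would use a smooth dyadic partition of unity away from the singular variety $\{s+t=1\}$, so that $(1-(s+t))_+^\alpha=\sum_{j\ge 0} 2^{-j\alpha}\, m_j(s,t)$ where each $m_j$ is supported in a slab of width $\sim 2^{-j}$ around $\{s+t=1\}$ and is normalized in a suitable symbol class. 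The gain $2^{-j\alpha}$ is what must beat the operator-norm growth of the pieces; the required $\alpha(p_1,p_2)$ in each region will be exactly the exponent for which $\sum_j 2^{-j\alpha}\cdot(\text{norm of }m_j(\mathcal{L})\text{-type piece})$ converges.

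Next, for each slab piece I would further split the unit-scale frequency region: the resolution of identity $1=\sum_{\ell}\phi(2^{-\ell}\mathcal{L})$ combined with the constraint $s+t\approx 1$ forces one of the two factors to live at frequency $\lesssim 1$ and the other to be essentially arbitrary in $[0,1]$, or both to be comparable. This is the bilinear-to-linear decomposition in the spirit of Bernicot--Grafakos--Song--Yan and Jeong--Lee--Vargas: one rewrites $m_j(\mathcal{L}_1\otimes I, I\otimes \mathcal{L}_2)(f,g)$ as an integral/sum over a parameter of products $A_j^{(1)}f\cdot A_j^{(2)}g$, where $A_j^{(i)}$ are linear spectral multipliers of $\mathcal{L}$ whose symbols are, after rescaling, smooth bumps or truncated Bochner--Riesz-type symbols of order $\alpha$ at scale $2^{-j}$. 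Then H\"older's inequality $\|A_j^{(1)}f\cdot A_j^{(2)}g\|_{L^p}\le \|A_j^{(1)}f\|_{L^{p_1}}\|A_j^{(2)}g\|_{L^{p_2}}$ reduces everything to \emph{linear} bounds: on $L^2$ one uses the Plancherel/Weyl-transform machinery (the explicit functional calculus \eqref{Functional calculus for sub Laplacian} and Laguerre expansions) to get the sharp $\mathcal{L}$-Plancherel estimate with the $d$-dimensional (not $Q$-dimensional) exponent, which is the source of all the $(d-1)$ and $(d+1)$ terms; on $L^1$ or $L^\infty$ one uses the kernel size bound $\|\text{kernel of }\phi(2^{-j}\mathcal{L})\|_{L^1}\lesssim 1$ and crude $L^1\to L^\infty$ bounds, which is where the homogeneous dimension $Q$ reappears (hence the $Q(\tfrac1{p_1}-\tfrac12)$ terms in Regions III and IV); intermediate exponents come by interpolation. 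The region-by-region bookkeeping --- which factor is "high" and which is "low", and whether to interpolate first or sum in $j$ first --- mirrors exactly the Euclidean case in Theorem \ref{Theorem: Euclidean bilinear Bochner-Riesz for Grafakos}, with $n$ replaced by $d$ wherever the sharp $\mathcal{L}^2$-spectral estimate is used and by $Q$ wherever only trivial kernel bounds are available.

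The main obstacle, I expect, is establishing the sharp linear building-block estimates with the \emph{topological} dimension $d$ in place of the homogeneous dimension $Q$ --- i.e.\ a weighted-Plancherel / restriction-type estimate for $\mathcal{L}$ of the form $\|F(\mathcal{L})\|_{L^2\to L^2(\text{localized})}\lesssim \|F\|_{L^2}$-scaled-by-$2^{jd/2}$ rather than $2^{jQ/2}$, uniformly over the fibering $\lambda$. On M\'etivier groups this is subtler than on Heisenberg-type groups because $J_\lambda^2\ne -|\lambda|^2\,\mathrm{id}$, so the Laguerre parameters $b_n^\lambda$ genuinely vary with $\lambda$ and one cannot diagonalize once and for all; one must control the eigenvalue-counting function $\#\{\mathbf{k}:\eta_{\mathbf{k}}^\lambda\le \tau\}$ and its interaction with the $\times_\lambda$-convolution uniformly in $\lambda\in\mathfrak{g}_{2,r}^*$, using the truncated restriction estimates of Niedorf and Martini--M\"uller. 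A secondary difficulty is handling the \emph{bilinear} coupling of the two central-frequency integrals $d\lambda_1\,d\lambda_2$ in \eqref{Bilinear Bochner-Riesz menas}: the twisted convolutions are taken at two independent frequencies $\lambda_1,\lambda_2$ but the output is read off at $\lambda_1+\lambda_2$, so after the linear reduction one still needs a transference/Fubini argument (and, in the non-Banach range $p<1$, an $\ell^p$-summation of the local pieces, since Minkowski fails) to assemble the global bound from the fiberwise ones --- this is where Region V and the endpoints of Region IV are most delicate.
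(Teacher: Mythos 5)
Your high-level skeleton does coincide with the paper's: dyadic slabs $\Psi_j^{\alpha}$ of width $2^{-j}$ about $\{\eta_1+\eta_2=1\}$, conversion of each slab into a sum of products of \emph{linear} spectral multipliers (the paper does this concretely by periodizing in $\eta_2$ and expanding $\Psi_j^{\alpha}(\eta_1,\cdot)=\sum_{l}\phi_{j,l}^{\alpha}(\eta_1)e^{i\pi l\eta_2}$ with the decay \eqref{Convergence of sum of l using phi}), H\"older, reduction by bilinear real interpolation to the endpoint triples $(1,1,1/2),(1,2,2/3),(2,2,1),(1,\infty,1),(2,\infty,2),(\infty,\infty,\infty)$, and spatial localization to balls of radius $2^{j(1+\varepsilon)}$ with an $\ell^p$ reassembly (the off-diagonal kernel pieces being disposed of by the pointwise bound of Lemma \ref{Lemma: Pointwise kernel estimate for Bj}). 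So the architecture is right.

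The genuine gap is in the mechanism by which the topological dimension $d$ is actually reached at the endpoints with $p_1=1$ (Regions IV, V, i.e.\ the points $(1,1,1/2)$ and $(1,2,2/3)$). Your plan rests on a localized restriction/Plancherel estimate of size $2^{jd/2}$ ``uniformly over the fibering $\lambda$''; this is precisely a weighted restriction-type estimate, and the paper emphasizes (following \cite{Niedorf_p_specific_Heisenber_group_2024}, Section 8) that such estimates cannot in general be expected to hold on these groups, so the building block you invoke is unavailable. The proof instead inserts a cutoff $\Theta(2^{M}T)$ in the central operator $T$ and uses the truncated restriction estimate \eqref{Restriction estimate inside prop}, $\|F_M(\mathcal{L},T)f\|_{L^2}\lesssim 2^{-Md_2/2}\|F\|_{L^2}\|f\|_{L^1}$, and then, crucially, decomposes the supports of $f$ and $g$ into first-layer balls of radius $\sim 2^{M_1(1+\varepsilon)}$ \emph{matched to the $T$-frequency scale}, sums using $N_{M_1}\lesssim 2^{(j-M_1)(1+\varepsilon)d_1}$, bounded overlap, and the Métivier inequality $d_1>d_2$ (which makes $\sum_{M_1\le j}2^{(M_1-j)(d_1-d_2)/2}$ converge), while the mismatched terms are killed by arbitrary-decay weighted kernel bounds as in \eqref{Estimate: Outside of M1 ball for f in (1,1,1/2)}. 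It is this interplay, not a single sharp $L^2$ bound, that converts $Q$ into $d+1$ and $(d+1)/2$ at $(1,1,1/2)$ and $(1,2,2/3)$; without it, your H\"older-plus-restriction scheme yields only $Q$-dependent thresholds in Regions IV and V. A second, smaller omission: at $(2,\infty,2)$ and $(\infty,\infty,\infty)$ the paper does not factor through products of linear bounds at all, but applies first-layer weighted Plancherel estimates with weights $|x-y|^{\gamma_1}|x-z|^{\gamma_2}$, $\gamma_i<d_2/2$ (Propositions \ref{Proposition: First layer weighted Plancherel for limited alpha} and \ref{Proposition: Bilinear First layer weighted Plancherel for limited alpha}, the latter a new bilinear version proved in Section \ref{Weighted Plancherel estimates}), directly to the kernel with one input frozen; this is where $Q/2$ is lowered to essentially $d/2$ in the Banach endpoints, and your sketch does not supply a substitute for it.
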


\vspace{-0.5cm}
\begin{figure}[!ht]
\begin{centering}
\definecolor{qqqqff}{rgb}{0,0,1}
\begin{tikzpicture}[line cap=round,line join=round,x=0.7cm,y=0.7cm]
\draw (6,6)-- (0,6);
\draw (6,0)-- (6,6);
\draw (0,3)-- (6,3);
\draw [->] (0,0) -- (8,0);
\draw [->] (0,0) -- (0,7.5);
\draw (3,6)-- (3,3);
\draw (0,6)-- (6,0);
\draw (3,3)-- (3,0);
\draw (0,3)-- (3,0);
\begin{scriptsize}
\fill [color=qqqqff] (0,0) circle (1.5pt);
\draw[color=qqqqff] (1,-.5) node {$\alpha>d-\frac{1}{2}$};
\fill [color=qqqqff] (0,6) circle (1.5pt);
\draw[color=qqqqff] (0.8,6.5) node {$\alpha > \frac{Q}{2}$};
\fill [color=qqqqff] (6,6) circle (1.5pt);
\draw[color=qqqqff] (6.19,6.5) node {$\alpha>d+1$};
\fill [color=qqqqff] (6,0) circle (1.5pt);
\draw[color=qqqqff] (6.9,0.4) node {$\alpha>\frac{Q}{2}$};
\fill [color=qqqqff] (3,0) circle (1.5pt);
\draw[color=qqqqff] (4.2,-0.5) node {$\alpha>\frac{d-1}{2}$};
\fill [color=qqqqff] (3,6) circle (1.5pt);
\draw[color=qqqqff] (3.3,6.5) node {$\alpha>\frac{d+1}{2}$};
\fill [color=qqqqff] (0,3) circle (1.5pt);
\draw[color=qqqqff] (-1,3.7) node {$\alpha>\frac{d-1}{2}$};
\fill [color=qqqqff] (3,3) circle (1.5pt);
\draw[color=qqqqff] (3.7,3.33) node {$\alpha>0$};
\fill [color=qqqqff] (6,3) circle (1.5pt);
\draw[color=qqqqff] (7,2.8) node {$\alpha>\frac{d+1}{2}$};
\fill [color=qqqqff] (6,-0.5) circle (0pt);
\draw[color=qqqqff] (6.0,-0.38) node {$1$};
\fill [color=qqqqff] (3,-0.5) circle (0pt);
\draw[color=qqqqff] (2.9,-0.48) node {$\frac{1}{2}$};
\fill [color=qqqqff] (-0.25,-0.42) circle (0pt);
\draw[color=qqqqff] (-0.25,-0.28) node {$O$};
\fill [color=qqqqff] (-0.5,3) circle (0pt);
\draw[color=qqqqff] (-0.38,3.0) node {$\frac{1}{2}$};
\fill [color=qqqqff] (-0.5,6) circle (0pt);
\draw[color=qqqqff] (-0.38,6.0) node {$1$};
\fill [color=qqqqff] (8.5,-0.5) circle (0pt);
\draw[color=qqqqff] (8.5,-0.16) node {$\frac{1}{p_1}$};
\fill [color=qqqqff] (-0.5,8.5) circle (0pt);
\draw[color=qqqqff] (-0.58,7.5) node {$\frac{1}{p_2}$};
\draw[color=qqqqff] (2.3,2.1) node {$I$};
\draw[color=qqqqff] (1.0,1.0) node {$II$};
\draw[color=qqqqff] (4.1,1.0) node {$III$};
\draw[color=qqqqff] (1.0,4.1) node {$III$};
\draw[color=qqqqff] (4.7,2.1) node {$IV$};
\draw[color=qqqqff] (2.3,4.7) node {$IV$};
\draw[color=qqqqff] (4.7,4.7) node {$V$};
\end{scriptsize}
\end{tikzpicture}
\begin{tikzpicture}[line cap=round,line join=round,x=0.7cm,y=0.7cm]
\draw (6,6)-- (0,6);
\draw (6,0)-- (6,6);
\draw (0,3)-- (6,3);
\draw [->] (0,0) -- (8,0);
\draw [->] (0,0) -- (0,7.5);
\draw (3,6)-- (3,3);
\draw (0,6)-- (6,0);
\draw (3,3)-- (3,0);
\draw (0,3)-- (3,0);
\begin{scriptsize}
\fill [color=qqqqff] (0,0) circle (1.5pt);
\draw[color=qqqqff] (1,-.5) node {$\alpha>d-\frac{1}{2}$};
\fill [color=qqqqff] (0,6) circle (1.5pt);
\draw[color=qqqqff] (0.75,6.5) node {$\alpha > \frac{d}{2}$};
\fill [color=qqqqff] (6,6) circle (1.5pt);
\draw[color=qqqqff] (6.19,6.5) node {$\alpha>d$};
\fill [color=qqqqff] (6,0) circle (1.5pt);
\draw[color=qqqqff] (6.8,0.4) node {$\alpha>\frac{d}{2}$};
\fill [color=qqqqff] (3,0) circle (1.5pt);
\draw[color=qqqqff] (4.2,-0.5) node {$\alpha>\frac{d-1}{2}$};
\fill [color=qqqqff] (3,6) circle (1.5pt);
\draw[color=qqqqff] (3.3,6.5) node {$\alpha>\frac{d}{2}$};
\fill [color=qqqqff] (0,3) circle (1.5pt);
\draw[color=qqqqff] (-1,3.7) node {$\alpha>\frac{d-1}{2}$};
\fill [color=qqqqff] (3,3) circle (1.5pt);
\draw[color=qqqqff] (3.7,3.33) node {$\alpha>0$};
\fill [color=qqqqff] (6,3) circle (1.5pt);
\draw[color=qqqqff] (6.8,2.8) node {$\alpha>\frac{d}{2}$};
\fill [color=qqqqff] (6,-0.5) circle (0pt);
\draw[color=qqqqff] (6.0,-0.38) node {$1$};
\fill [color=qqqqff] (3,-0.5) circle (0pt);
\draw[color=qqqqff] (2.9,-0.48) node {$\frac{1}{2}$};
\fill [color=qqqqff] (-0.25,-0.42) circle (0pt);
\draw[color=qqqqff] (-0.25,-0.28) node {$O$};
\fill [color=qqqqff] (-0.5,3) circle (0pt);
\draw[color=qqqqff] (-0.38,3.0) node {$\frac{1}{2}$};
\fill [color=qqqqff] (-0.5,6) circle (0pt);
\draw[color=qqqqff] (-0.38,6.0) node {$1$};
\fill [color=qqqqff] (8.5,-0.5) circle (0pt);
\draw[color=qqqqff] (8.5,-0.16) node {$\frac{1}{p_1}$};
\fill [color=qqqqff] (-0.5,8.5) circle (0pt);
\draw[color=qqqqff] (-0.58,7.5) node {$\frac{1}{p_2}$};
\draw[color=qqqqff] (2.3,2.1) node {$I$};
\draw[color=qqqqff] (1.0,1.0) node {$II$};
\draw[color=qqqqff] (4.1,1.0) node {$III$};
\draw[color=qqqqff] (1.0,4.1) node {$III$};
\draw[color=qqqqff] (4.7,2.1) node {$IV$};
\draw[color=qqqqff] (2.3,4.7) node {$IV$};
\draw[color=qqqqff] (4.7,4.7) node {$V$};
\end{scriptsize}
\end{tikzpicture}
        \caption{Here $O=(0,0)$, and $\alpha>\alpha(p_1, p_2)$ represents that $\mathcal{B}^{\alpha}$ is bounded on $L^{p_1}(G) \times L^{p_2}(G) \to L^p(G)$ for $\alpha>\alpha(p_1, p_2)$. The left picture is described by Theorem \ref{Bilinear Bochner-Riesz Main theorem}, while right picture is described by Theorem \ref{Bilinear Bochner-Riesz theorem with restricted f and g}.}
\end{centering}        
\end{figure}

To understand the significance of Theorem \ref{Bilinear Bochner-Riesz Main theorem}, let us compare it with its Euclidean counterpart, Theorem \ref{Theorem: Euclidean bilinear Bochner-Riesz for Grafakos}. For $p>1$, on the region $I$ and $II$, our result is an exact analogue of the Theorem \ref{Theorem: Euclidean bilinear Bochner-Riesz for Grafakos}, in which the Euclidean dimension of $\mathbb{R}^n$ in the expression of the smoothness threshold $\alpha(p_1, p_2)$ is replaced by the topological dimension $d$ of the underlying M\'etivier groups $G$. On the other hand, for the region $p\leq 1$, note that at $(1,1,1/2)$, the smoothness threshold in Theorem \ref{Theorem: Euclidean bilinear Bochner-Riesz for Grafakos} is $n-1/2$, while in our setting the corresponding threshold is $d+1$. This difference arises because in the Euclidean case, the kernel of the bilinear Bochner-Riesz operator can be explicitly expressed in terms of the Bessel functions (see \cite[Proposition 4.2 (i)]{Bernicot_Grafakos_Song_Yan_Bilinear_Bochner_Riesz_2015}). In our setting, an explicit kernel representation for the Bochner-Riesz operator $\mathcal{B}^{\alpha}$ is not known (see \cite[Remark, p. 118]{Muller_Riesz_Means_Kohn_Laplacian_1989}). Likewise, for $(1,2,2/3)$, the Theorem \ref{Theorem: Euclidean bilinear Bochner-Riesz for Grafakos} requires $\alpha>n/2$, whereas our result holds for $\alpha>(d+1)/2$. However, at $(1, \infty,1)$ we only get $\alpha>Q/2$. Hence, we conclude that our theorem on M\'etivier groups gives boundedness of $\mathcal{B}^{\alpha}$ for $\alpha>\alpha(p_1, p_2)$, where $\alpha(p_1, p_2)$ is expressed in terms of the topological dimension $d$ for the region $I$, $II$ and $V$ and in terms of a combination of $d$ and $Q$ for the region $III$ and $IV$, see also Figure 2.

In addition, an improvement of Theorem \ref{Bilinear Bochner-Riesz Main theorem} for $p\leq 1$ is possible whenever the Fourier transform of the input functions $f$ or $g$ or both is supported away from a fixed small neighborhood of the origin. In fact, we have the following theorem.
\begin{theorem}
\label{Bilinear Bochner-Riesz theorem with restricted f and g}
Let $1\leq p_1, p_2 \leq \infty$ with $1/p = 1/p_1 +1/p_2$. Then the bound 
    \begin{align*}
        \|\mathcal{B}^{\alpha}(f,g)\|_{L^p} &\leq C \|f\|_{L^{p_1}} \|g\|_{L^{p_2}}
    \end{align*}
    holds true whenever $p_1, p_2, p$ and $\alpha> \alpha(p_1, p_2)$ satisfy one of the following conditions:
\begin{enumerate}
    \item (Region III) $2 \leq p_2 \leq \infty$, $1\leq p_1, p \leq 2$ and $\alpha(p_1, p_2)= d(\frac{1}{2}-\frac{1}{p_2})-(1-\frac{1}{p})$, if $\supp \mathcal{F}_2 g(z, \cdot) \subseteq \{|\lambda_2| \geq \kappa_2 \}$ for some $\kappa_2>0$ and every $z \in \mathfrak{g}_1$.
    \item (Region III) $2 \leq p_1 \leq \infty$, $1\leq p_2, p \leq 2$ and $\alpha(p_1, p_2)= d(\frac{1}{2}-\frac{1}{p_1})-(1-\frac{1}{p})$, if $\supp \mathcal{F}_2 f(y, \cdot) \subseteq \{|\lambda_1| \geq \kappa_1 \}$ for some $\kappa_1>0$ and every $y \in \mathfrak{g}_1$.
    \item (Region IV) $1\leq p_1 \leq 2 \leq p_2 \leq \infty$, $0<p\leq 1$ and $\alpha(p_1, p_2)= d(\frac{1}{p_1}-\frac{1}{2})$, if $\supp \mathcal{F}_2 g(z, \cdot) \subseteq \{|\lambda_2| \geq \kappa_2 \}$ for some $\kappa_2>0$ and every $z \in \mathfrak{g}_1$.
    \item (Region IV) $1\leq p_2 \leq 2 \leq p_1 \leq \infty$, $0<p\leq 1$ and $\alpha(p_1, p_2)= d(\frac{1}{p_2}-\frac{1}{2})$, if $\supp \mathcal{F}_2 f(y, \cdot) \subseteq \{|\lambda_1| \geq \kappa_1 \}$ for some $\kappa_1>0$ and every $y \in \mathfrak{g}_1$.
    \item (Region V) $1\leq p_1, p_2 \leq 2$ and $\alpha(p_1, p_2)= d(\frac{1}{p}-1)$, if $\supp \mathcal{F}_2 f(y, \cdot) \subseteq \{|\lambda_1| \geq \kappa_1 \}$ and $\supp \mathcal{F}_2 g(z, \cdot) \subseteq \{|\lambda_2| \geq \kappa_2 \}$ for some $\kappa_1, \kappa_2>0$ and every $y, z \in \mathfrak{g}_1$.
\end{enumerate}
\end{theorem}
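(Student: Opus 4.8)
The key structural point is that once the central-variable Fourier support of (at least one of) the inputs is bounded away from the origin, the degeneracy responsible for the loss from $d$ to $Q$ disappears, and one can run a "non-degenerate" or "elliptic" variant of the argument used to prove Theorem \ref{Bilinear Bochner-Riesz Main theorem}. Concretely, the spectral decomposition \eqref{Functional calculus for sub Laplacian} involves the eigenvalues $\eta_{\mathbf{k}}^{\lambda}=\sum_n (2k_n+r_n)b_n^{\lambda}$, and on a Métivier group the $b_n^{\lambda}$ are comparable to $|\lambda|$ for $\lambda$ away from a measure-zero set; the factor $|\lambda|^{d_2}$-type weight in the Plancherel measure is what, in the full range, forces the homogeneous dimension $Q=d_1+2d_2$ to appear. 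Restricting $|\lambda_j|\ge\kappa_j$ confines $\lambda_j$ to a region where $|\lambda_j|\sim 2^{\ell}$ for $\ell\ge\log_2\kappa_j$, so after a dyadic decomposition in $|\lambda_j|$ the number of relevant scales below $1$ is finite and the "bad" large-$\lambda$ behaviour is the only thing left. The plan is therefore: (i) perform the standard dyadic (Littlewood--Paley in $\mathcal L$) decomposition $\mathcal B^{\alpha}=\sum_{j\ge 0}\mathcal B_j^{\alpha}$, where $\mathcal B_j^{\alpha}$ localizes $\eta_{\mathbf k_1}^{\lambda_1}+\eta_{\mathbf k_2}^{\lambda_2}\sim 2^{-j}$ near the critical surface; (ii) further decompose in $|\lambda_1|,|\lambda_2|$ into dyadic pieces, using the support hypothesis to discard the pieces with $|\lambda_j|$ small; (iii) on each surviving piece use the kernel estimates / weighted Plancherel bounds already available for Métivier groups (of the type in \cite{Niedorf_Metivier_group_2023}, \cite{Niedorf_Restriction_Stratified_group_2025}) together with the bilinear interpolation scheme of \cite{Bernicot_Grafakos_Song_Yan_Bilinear_Bochner_Riesz_2015}, \cite{Liu_Wang_Bilinear_Bochner_Riesz_Non_Banach_2020}; (iv) sum the geometric series in $j$, which converges precisely when $\alpha>\alpha(p_1,p_2)$ with the stated, $d$-only, thresholds.

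For Regions III and V the mechanism is cleanest: there the loss in Theorem \ref{Bilinear Bochner-Riesz Main theorem} over the Euclidean value was exactly a term $Q(\tfrac1{p_j}-\tfrac12)$ (resp.\ $(d+1)(\tfrac1p-1)$ versus the hoped-for $d(\tfrac1p-1)$), and this term is produced by an $L^1\to L^1$ or $L^1\to L^2$ estimate on the "small-$\lambda$" part of the multiplier, where one must pay the full homogeneous-dimension volume factor. Removing the small-$\lambda$ part by hypothesis, one instead gets to use the sharp Mihlin--Hörmander / Bochner--Riesz bounds with threshold $(d-1)/2$ that hold on Métivier groups (the sharp linear results cited in the introduction), feeding these into the bilinear square-function decomposition of \cite{Jeong_Lee_Vargas_Bilinear_Bochner_Riesz_2018} on the Banach side and into the Sobolev-embedding / Hausdorff--Young argument of \cite{Liu_Wang_Bilinear_Bochner_Riesz_Non_Banach_2020} on the non-Banach side. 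Explicitly, for Region IV I would fix the input $g$ with $\supp\mathcal F_2 g\subseteq\{|\lambda_2|\ge\kappa_2\}$, write $\mathcal B^{\alpha}(f,g)=\sum_j\mathcal B_j^{\alpha}(f,g)$, bound each $\mathcal B_j^{\alpha}$ by putting $f$ in $L^{p_1}$ ($1\le p_1\le2$) via the $L^{p_1}\to L^{p_1}$ boundedness of the associated linear operator applied in the first slot (cost $\lesssim 2^{jd(1/p_1-1/2)}2^{-j\alpha}$ after absorbing the $\alpha$ smoothing) and $g$ in $L^{p_2}$ ($p_2\ge2$) via an $L^{p_2}\to L^{p_2}$ or $L^2\to L^{p_2}$ estimate which, thanks to $|\lambda_2|\ge\kappa_2$, loses only a bounded factor; Hölder in the output then gives $\|\mathcal B_j^{\alpha}(f,g)\|_p\lesssim 2^{j(d(1/p_1-1/2)-\alpha)}\|f\|_{p_1}\|g\|_{p_2}$, and summing needs $\alpha>d(1/p_1-1/2)$, as claimed.

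The main obstacle I anticipate is making precise the claim that "$|\lambda_j|\ge\kappa_j$ kills the $Q$-loss" at the level of the operator norms rather than just the kernels --- i.e.\ proving a uniform-in-$j$, uniform-in-$\kappa_j$ (or at least $\kappa_j$-independent up to constants) version of the required one-input linear estimates that is genuinely governed by $d$ and not $Q$. This is essentially a weighted Plancherel estimate on the Métivier group where the weight is adapted to the truncated cone $\{|\lambda|\ge\kappa\}$; the technical heart is that on such a region the rescaled Laguerre expansion behaves like a finite-dimensional (Heisenberg-like, hence $d$-dimensional) object, so the Mauceri--Meda/Christ machinery can be run with the topological dimension in place of $Q$. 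One must also be careful that the bilinear interpolation between the extreme estimates respects the support hypothesis --- complex interpolation of the family $\mathcal B^{z}$ in the smoothness parameter is fine since the support constraint on $f,g$ is preserved, but interpolating between a "restricted-$f$" estimate and a "restricted-$g$" estimate (needed to reach the interior of Region V) requires that both hypotheses hold simultaneously, which is exactly why item (5) assumes support conditions on both inputs. Once these uniform linear building blocks are in place, the summation over $j$ and the bookkeeping over the finitely many small-$\lambda$-excluded scales are routine, so I expect the proof to be substantially shorter than that of Theorem \ref{Bilinear Bochner-Riesz Main theorem}, reusing most of its apparatus with the degenerate pieces simply omitted.
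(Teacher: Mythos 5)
Your high-level outline (dyadic decomposition in the spectral parameter, dyadic decomposition in $|\lambda_1|,|\lambda_2|$ with the support hypothesis eliminating the small-$|\lambda|$ scales, reduction to a few vertex estimates plus the bilinear interpolation scheme, reuse of the apparatus of Theorem \ref{Bilinear Bochner-Riesz Main theorem}) matches the paper's skeleton. But the step where you actually produce the vertex estimates is not a valid argument, and it is precisely where the work lies. You propose to bound $\mathcal B_j^{\alpha}(f,g)$ by applying a linear $L^{p_1}\to L^{p_1}$ bound in the $f$-slot with cost $2^{jd(1/p_1-1/2)}2^{-j\alpha}$, an $L^{p_2}\to L^{p_2}$ bound in the $g$-slot that ``loses only a bounded factor'' thanks to $|\lambda_2|\ge\kappa_2$, and then H\"older. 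First, $\mathcal B_j^{\alpha}$ does not factor; after the Fourier-series expansion $\Psi_j^{\alpha}(\eta_1,\eta_2)=\sum_l\phi_{j,l}^{\alpha}(\eta_1)e^{i\pi l\eta_2}$ used in the paper, the $g$-slot carries the modulated multipliers $\psi_l(\mathcal L)=e^{i\pi l\mathcal L}\tilde\chi(\mathcal L)$, whose $L^{p_2}\to L^{p_2}$ norms for $p_2\neq 2$ grow polynomially in $|l|$; paying for that growth through the decay \eqref{Convergence of sum of l using phi} of $\phi_{j,l}^{\alpha}$ destroys the claimed numerology. The paper never uses $L^{p_2}$ bounds on the $g$-factor: it always measures that factor in $L^2$, where $\|\psi_l\|_{L^\infty}\lesssim 1$ uniformly in $l$, and converts to $\|g\|_{L^{p_2}}$ by H\"older on spatially localized pieces.

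Second, the mechanism by which the support hypothesis buys the $d$-threshold is more specific than ``the Mauceri--Meda/Christ machinery runs with $d$ in place of $Q$'' or the JLV square-function route (which the paper does not use). In the paper's proof at $(1,\infty,1)$, the hypothesis $\supp\mathcal F_2 g(z,\cdot)\subseteq\{|\lambda_2|\ge\kappa_2\}$ enters in two places: (a) it truncates the second-variable frequency decomposition at $M_2\le L_0$, which is what keeps the off-support error term $S_{13}$ under control, since the weighted kernel bound there carries a factor $2^{M_2(N-d_2/2)}$ that would blow up for $M_2\to\infty$; and (b) it allows decomposing the support of $g$ in the first layer at the same scale $2^{M_1(1+\varepsilon)}$ as $f$. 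The gain then comes from combining the truncated restriction estimate $\|\phi_{j,l,M_1}^{\alpha}(\mathcal L,T)f\|_{L^2}\lesssim 2^{-M_1d_2/2}\|\phi_{j,l}^{\alpha}\|_{L^\infty}\|f\|_{L^1}$ with $\|g_{n_2,m_2}^{M_1,j}\|_{L^2}\lesssim 2^{M_1(1+\varepsilon)d_1/2}2^{j(1+\varepsilon)d_2}\|g\|_{L^\infty}$, the bounded-overlap counting of the matched sub-balls, and the convergence of $\sum_{M_1\le j}2^{(M_1-j)(d_1-d_2)/2}$ (which uses $d_1>d_2$ on M\'etivier groups); the worst term $M_1=j$ yields exactly $2^{jd/2}$. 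None of this is recoverable from one-slot linear operator-norm bounds plus H\"older, so as written your proposal has a genuine gap at the vertex estimates, even though the reduction, the role of interpolation, and the observation that item (5) needs support conditions on both inputs are all correct.
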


Notice that at $(1, \infty, 1)$, under the additional assumption on the support of the input functions, we are able to replace the threshold from $Q/2$ to $d/2$. Similarly, at the points $(1,2,2/3)$ and $(1,1,1/2)$, we have further reduced the threshold from $(d+1)/2$ to $d/2$, and from $d+1$ to $d$, respectively. Hence, the above result provides an exact analogue of the Euclidean counterpart (see Theorem \ref{Theorem: Euclidean bilinear Bochner-Riesz for Grafakos}), except at the point $(1,1,1/2)$, under the assumptions on the support of the input functions.

As observed in Theorem \ref{Bilinear Bochner-Riesz Main theorem},  at the point $(1, \infty,1)$, the bilinear Bochner-Riesz mean is bounded from $L^1(G) \times L^{\infty}(G) \to L^1(G)$ whenever $\alpha>Q/2$. This assertion can be further improved if we consider the mixed norm estimates. For $0<p,q< \infty$, let us define the mixed norm of a measurable function $h$ on $G$ given by
\begin{align*}
    \|h\|_{L_x^{p} L_u^{q}(G)} &:= \Big(\int_{\mathbb{R}^{d_2}} \Big( \int_{\mathbb{R}^{d_1}} |h(x,u)|^p \ dx \Big)^{q/p} \ du \Big)^{1/q} ,
\end{align*}
with obvious modification if one of $p, q$ is $\infty$.

Concerning the mixed norm estimate, the following theorems are our first main contributions in this direction.
\begin{theorem}
\label{Theorem: Mixed norm estimate for first layer}
    If $\alpha>(d+1)/2$, then
    \begin{align*}
        \|\mathcal{B}^{\alpha}(f,g)\|_{L_x^{2/3} L_u^{1}(G)} \leq C \|f\|_{L^1(G)} \|g\|_{L_x^2 L_u^{\infty}(G)} .
    \end{align*}
\end{theorem}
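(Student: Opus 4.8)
The plan is to reduce $\mathcal{B}^\alpha$ to dyadic annular pieces in the joint spectrum of the two copies of $\mathcal{L}$ and to estimate each piece by combining an $L^1$-type bound for the ``$f$-half'' with an $L^2$-in-$x$, $L^\infty$-in-$u$ bound for the ``$g$-half''; the point will be that the exponent $(d+1)/2$ --- rather than the $Q/2$ that Theorem~\ref{Bilinear Bochner-Riesz Main theorem} gives at $(1,\infty,1)$ --- is produced only by invoking the \emph{sharp}, topological-dimension form of the weighted Plancherel estimate for $\mathcal{L}$ on a M\'etivier group. Concretely, since $\mathcal{B}^\alpha_R$ obeys the stated dilation identity we may take $R=1$. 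Fix $\varphi_0\in C_c^\infty((-\infty,3/4))$ and $\varphi\in C_c^\infty((1/4,1))$ with $\varphi_0(s)+\sum_{j\ge1}\varphi(2^j(1-s))\equiv1$ on $(-\infty,1]$, and split $(1-s)_+^\alpha=\varphi_0(s)(1-s)_+^\alpha+\sum_{j\ge1}2^{-j\alpha}\varphi_j(s)$ with $\varphi_j(s):=2^{j\alpha}\varphi(2^j(1-s))(1-s)_+^\alpha$, so that $\varphi_j$ is supported in $[1-2^{-j+1},1-2^{-j-1}]$ with $\|\varphi_j^{(k)}\|_\infty\lesssim_k 2^{jk}$ uniformly in $j$. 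Let $\mathcal{B}_j$ be the bilinear operator obtained from \eqref{Bilinear Bochner-Riesz menas} by replacing $(1-(\eta_{\mathbf{k}_1}^{\lambda_1}+\eta_{\mathbf{k}_2}^{\lambda_2}))_+^\alpha$ with $\varphi_j(\eta_{\mathbf{k}_1}^{\lambda_1}+\eta_{\mathbf{k}_2}^{\lambda_2})$; the piece with the smooth, compactly supported multiplier $\varphi_0(\cdots)(1-\cdots)_+^\alpha$ is handled by the same (simpler) argument below. Thus it suffices to prove
\[
\|\mathcal{B}_j(f,g)\|_{L_x^{2/3}L_u^1(G)}\ \lesssim\ 2^{j(d+1)/2}\,\|f\|_{L^1(G)}\,\|g\|_{L_x^2L_u^\infty(G)},\qquad j\ge1,
\]
since $\sum_{j\ge1}2^{-j\alpha}2^{j(d+1)/2}<\infty$ exactly when $\alpha>(d+1)/2$.

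\emph{Passage to $G\times G$ and localization.} The multiplier of $\mathcal{B}_j$ depends only on $\eta_{\mathbf{k}_1}^{\lambda_1}+\eta_{\mathbf{k}_2}^{\lambda_2}$, hence $\mathcal{B}_j(f,g)=[\varphi_j(\widetilde{\mathcal L})(f\otimes g)]|_{\Delta}$, where $\widetilde{\mathcal L}=\mathcal L\otimes I+I\otimes\mathcal L$ is the sub-Laplacian of the M\'etivier group $\widetilde G=G\times G$ and $\Delta\cong G$ is the diagonal; equivalently $\mathcal{B}_j(f,g)(v)=\int_G\int_G\widetilde K_j(w_1^{-1}v,w_2^{-1}v)\,f(w_1)\,g(w_2)\,dw_1\,dw_2$ with $\widetilde K_j$ the convolution kernel of $\varphi_j(\widetilde{\mathcal L})$ on $\widetilde G$. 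Because $\varphi_j$ is concentrated in a $2^{-j}$-window at $1$, finite propagation speed for the wave operator of $\widetilde{\mathcal L}$ makes $\widetilde K_j$ (up to rapidly decaying tails) supported in the Carnot--Carath\'eodory ball of radius $\sim2^j$; consequently, for $v$ in a CC-ball $B$ of radius $2^j$, $\mathcal{B}_j(f\mathbf{1}_{B},g)$ is supported in a dilate $CB$ and depends only on $g\mathbf{1}_{C'B}$. Partitioning $G$ into a bounded-overlap family of such balls, the $2/3$-triangle inequality for the $L_x^{2/3}$ quasi-norm (which here replaces Minkowski's inequality, unavailable since $2/3<1$) together with an $\ell^{2/3}$-H\"older splitting over the balls, using $\sum_k\|f\mathbf{1}_{B_k}\|_{L^1}=\|f\|_{L^1}$ and $\sum_k\|g\mathbf{1}_{C'B_k}\|_{L_x^2L_u^\infty}^2\lesssim\|g\|_{L_x^2L_u^\infty}^2$, reduces the claim to the single-ball estimate $\|\mathcal{B}_j(f\mathbf{1}_B,g)\|_{L_x^{2/3}L_u^1}\lesssim 2^{j(d+1)/2}\|f\mathbf{1}_B\|_{L^1}\|g\mathbf{1}_{C'B}\|_{L_x^2L_u^\infty}$.

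\emph{The single-ball estimate.} On a fixed ball I would further split the $g$-spectrum by Littlewood--Paley pieces, $\varphi_j(\widetilde{\mathcal L})=\sum_{0\le\ell\le j}\varphi_j(\widetilde{\mathcal L})\,\zeta_\ell(I\otimes\mathcal L)$ with $\zeta_\ell$ localized at $s_2\sim2^{-\ell}$ ($\zeta_j$ collecting the tail); on the $\ell$-th piece the $f$-spectrum is pinned near $1-c2^{-\ell}$ within a window of width $2^{-j}$. For each piece the estimate is assembled from three ingredients: (i) the $L_u^\infty$ norm of $g$, which lets the $\lambda_2$-integral over the $d_2$-dimensional dual of the centre of the second factor be handled by an $L^1(d\lambda_2)/L^\infty$ bound, so that those $d_2$ central directions never enter the exponent count in the $g$-slot; (ii) the sharp M\'etivier weighted Plancherel / Mikhlin--H\"ormander estimate for $\mathcal L$ on $G$ --- the same input that underlies the linear Bochner--Riesz index $(d-1)/2$ --- which makes the annular $f$-factor cost $2^{j(d-1)/2}$ while the Littlewood--Paley $g$-factor contributes an $L_x^2$-bound summable in $\ell$; and (iii) H\"older on the ball $B$ to pass from an $L_x^1$-bound back to the $L_x^{2/3}$ quasi-norm. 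Summing the geometric series in $\ell$ supplies the last factor $2^{j/2}$, giving the desired $2^{j(d+1)/2}$.

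\emph{Main obstacle.} The crux is the single-ball step, namely extracting the exponent $(d+1)/2$ rather than $Q/2$. A naive route --- expanding $\varphi_j(s_1+s_2)=\int\widehat{\varphi_j}(\tau)\,e^{i\tau s_1}e^{i\tau s_2}\,d\tau$ and using boundedness of the Schr\"odinger group $e^{i\tau\mathcal L}$ on $L^1$ and $L^\infty$ --- loses a full power of the homogeneous dimension in each slot and only yields $\alpha>Q$, which is useless here. The genuine improvement must come from combining the topological-dimension-sharp weighted Plancherel estimate for M\'etivier groups (which is what turns $Q$ into $d$ in the annular part) with a careful use of the $L_u^\infty$ norm on $g$ (which excises the second factor's central directions); making the bookkeeping of these two effects close against the $L_x^2$ Littlewood--Paley summation over $\ell$, while simultaneously respecting the $L_x^{2/3}$ quasi-norm on the output, is the delicate part of the argument.
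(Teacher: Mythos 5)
Your reduction stage (dyadic decomposition of the multiplier, factoring out $2^{-j\alpha}$, spatial localization at scale $2^j$ and reduction to a local estimate with target $2^{j(d+1)/2}$) runs parallel to the paper, but the step you yourself call the crux --- the single-ball estimate --- is not actually an argument. The splitting $\varphi_j(\widetilde{\mathcal L})\,\zeta_\ell(I\otimes\mathcal L)$ does not decouple the two spectral variables: $\varphi_j(s_1+s_2)\zeta_\ell(s_2)$ is still genuinely bivariate, so there is no ``annular $f$-factor'' and ``Littlewood--Paley $g$-factor'' to which separate linear multiplier theorems can be applied. The paper's proof hinges on a concrete decoupling device, the Fourier-series expansion \eqref{Fourier series decomposition} of $\Psi_j^{\alpha}(\eta_1,\cdot)$, which writes $\mathcal B_j$ as $\sum_l \phi_{j,l}^{\alpha}(\mathcal L)f\cdot\psi_l(\mathcal L)g$ with $\sup|\phi_{j,l}^{\alpha}|(1+|l|)^{1+\beta}\lesssim 2^{-j\alpha}2^{j\beta}$, keeping the narrow $2^{-j}$-width multiplier on the $f$-side; nothing in your plan plays this role, and you dismiss the Fourier-integral variant without replacing it. Your ingredient (i) is also not a real estimate: $g^{\lambda_2}(x)$ cannot be controlled by $\|g(x,\cdot)\|_{L^\infty_u}$ in any $L^1(d\lambda_2)$ fashion, and the claim that the central directions ``never enter the exponent count in the $g$-slot'' contradicts the correct bookkeeping --- in the paper the $L^\infty_u$ norm is used \emph{spatially}, the local $L^2_{z,s}$ norm of $g$ on a $u$-window of length $2^{2j(1+\varepsilon)}$ costing $2^{jd_2(1+\varepsilon)}\|g\|_{L^2_xL^\infty_u}$, and it is exactly this $2^{jd_2}$, combined with $2^{j/2}$ from the Fourier coefficients and $2^{j(d_1-d_2)/2}$ from the $\Theta(2^{M_1}T)$-truncated restriction estimate of Proposition \ref{Proposition: First layer weighted Plancherel} against the $2^{M_1d_1/2}$ H\"older factor (summable since $d_1>d_2$), that produces $(d+1)/2$; see the local claim \eqref{Expression: Assumption for mixed norms}. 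Your own count, $2^{j(d-1)/2}\cdot 2^{j/2}=2^{jd/2}$, does not give $2^{j(d+1)/2}$, and the extra power from your H\"older step (iii) is unaccounted for.

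There are two further genuine problems. First, your assembly inequality $\sum_k\|g\mathbf 1_{C'B_k}\|_{L^2_xL^\infty_u}^2\lesssim\|g\|_{L^2_xL^\infty_u}^2$ over a bounded-overlap cover of $G$ by CC-balls is false: for fixed $x$ there are unboundedly many balls stacked in the $u$-direction, each contributing up to $\sup_u|g(x,u)|^2$ (take $g$ independent of $u$ on a long $u$-interval to see divergence). This is precisely why the paper localizes by coordinate product sets $S^{|\cdot|}_{n'}\times S^{|\cdot|}_{n''}$ and, in the assembly, sums over the $x$-blocks \emph{inside} before taking the sup in $u$, with bounded overlap used only in the $u$-blocks. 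Second, $G\times G$ is not a M\'etivier group (for $\lambda=(\lambda_1,0)$ with $\lambda_1\neq0$ the map $J_\lambda$ is singular); this is harmless if you only invoke finite propagation speed for $\widetilde{\mathcal L}$, but you cannot appeal to any M\'etivier-specific multiplier or weighted Plancherel result on $\widetilde G$, and on $G$ itself the gain you need is the $2^{-M d_2/2}$ from the $T$-truncated estimates, not the plain Mihlin--H\"ormander bound you cite.
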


Before stating the other mixed norm estimate, let us first make an assumption about the second-layer weighted Plancherel estimates, which will be crucial in our proof. Let us set $T := (-( T_1^2+ \cdots + T_{d_2}^2))^{1/2}$.

\textbf{Assumption A:} If $F: \mathbb{R} \to \mathbb{C}$ is a bounded Borel function supported in a compact subset $A \subseteq \mathbb{R}$ and $\Theta: (0, \infty) \to \mathbb{C}$ is a smooth function with compact support, then the convolution kernel $\mathcal{K}_{F(\mathcal{L})\Theta(2^{M}T)}$ of $F(\mathcal{L})\Theta(2^{M}T)$ satisfies
\begin{align*}
        \int_G \left| |u|^{N} \mathcal{K}_{F(\mathcal{L})\Theta(2^{M}T)}(x,u) \right|^2 \ d(x,u) &\leq C_{A, \Theta, N} 2^{M(2 N-d_2)} \|F\|_{L_{N}^2}^2 ,
\end{align*}
for all $N \geq 0$ and $M \in \mathbb{Z}$.

The Assumption A is known to hold for the Heisenberg type groups, see \cite[Lemma 10]{Martini_Spectral_Multiplier_Heisenberg_Reiter_2015}. One can also see \cite{Hebisch_Spectral_Multiplier_Heisenberg_1993} for related discussion. Unfortunately, for all M\'etivier groups, whether Assumption A holds remains an open question. In \cite{Martini_Muller_New_Class_Two_Step_Stratified_Groups_2014}, Martini and M\"uller proved the second layer weighted Plancherel estimates under the assumption of some appropriate bounds of the derivatives of the functions $\lambda \to b_n^{\lambda}$ and $\lambda \to P_{n,{\lambda}}$ for $n=1, \ldots, \Lambda$, appeared in the spectral decomposition of $-J_{\lambda}^2$ (see Proposition \ref{Prop: Spectral decomposition of Jmu}). The singularities of these functions are lie in the Zariski closed subset $\mathfrak{g}_2^{*} \setminus \mathfrak{g}_{2,r}^{*}$. In general for any two-step stratified groups, the singularity set can be quite complicated, but in \cite{Martini_Muller_New_Class_Two_Step_Stratified_Groups_2014}, they were able to handle the situation for some particular cases, for example when $\dim \mathfrak{g}_2 \leq 2$ or $d \leq 7$.

In connection with the other mixed norm estimate, we have the following results.
\begin{theorem}
\label{Theorem: Mixed norm estimate for second layer}
    Under the Assumption A, if $\alpha>(d+1)/2$, then 
    \begin{align*}
        \|\mathcal{B}^{\alpha}(f,g)\|_{L_u^{2/3} L_x^{1}(G)} \leq C \|f\|_{L^1(G)} \|g\|_{L_u^2 L_x^{\infty}(G)} .
    \end{align*}
\end{theorem}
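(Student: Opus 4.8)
The plan is to run essentially the same scheme as for Theorem~\ref{Theorem: Mixed norm estimate for first layer}, with the first-layer variable $x$ and the central variable $u$ interchanged; the one genuinely new input is that the weighted Plancherel estimate needed in the central variable is precisely what Assumption~A supplies. First I would decompose the multiplier dyadically in the distance to the sphere: $(1-\sigma)_+^\alpha=\sum_{j\ge 0}2^{-j\alpha}\chi_j(\sigma)$ on $[0,1]$, with $\chi_0\in C_c^\infty((-\infty,1))$ and $\chi_j(\sigma)=\omega(2^j(1-\sigma))$ for $j\ge 1$ and a fixed $\omega\in C_c^\infty((0,\infty))$, so that $\|\chi_j^{(m)}\|_\infty\lesssim_m 2^{jm}$ and $\supp\chi_j\subseteq\{\sigma<1\}$. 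Writing $\mathcal{B}^{\chi_j}$ for the bilinear operator obtained by replacing $(1-\cdot)_+^\alpha$ with $\chi_j$ in \eqref{Bilinear Bochner-Riesz menas} (at $R=1$), and using that $\|\cdot\|_{L_u^{2/3}L_x^1(G)}^{2/3}$ is subadditive (since $2/3\le 1$), everything reduces to the single-scale bound $\|\mathcal{B}^{\chi_j}(f,g)\|_{L_u^{2/3}L_x^1}\le C\,2^{j(d+1)/2}\|f\|_{L^1}\|g\|_{L_u^2 L_x^\infty}$ with $C$ independent of $j$: summing the $2/3$-powers gives $\sum_j 2^{-2j\alpha/3}2^{j(d+1)/3}<\infty$ exactly for $\alpha>(d+1)/2$.

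Next I would convert the bilinear estimate into linear ones. Decomposing $\chi_j(\eta_1+\eta_2)$ into a superposition of tensor-product symbols $F^{(1)}(\eta_1)F^{(2)}(\eta_2)$ — for instance via $\chi_j(\eta_1+\eta_2)=\int_{\mathbb R}\widehat{\chi_j}(\tau)\,e^{i\tau\eta_1}e^{i\tau\eta_2}\,d\tau$, where $|\widehat{\chi_j}|$ is $L^1$-normalised and essentially supported in $|\tau|\lesssim 2^j$, possibly after a further dyadic splitting of the spectrum of $f$ — and using that a product symbol acts as the pointwise product $\mathcal{B}[F^{(1)}\otimes F^{(2)}](f,g)=(F^{(1)}(\mathcal L)f)\,(F^{(2)}(\mathcal L)g)$, I would apply the Cauchy–Schwarz inequality in $x$ for each fixed $u$ and then Hölder's inequality in $u$ with exponents $3$ and $3/2$ after raising to the power $2/3$. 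This reduces matters to $\|F^{(1)}(\mathcal L)f\|_{L^2(G)}\lesssim\|f\|_{L^1(G)}$, which is immediate because $F^{(1)}(\mathcal L)$ is an $\mathcal L$-multiplier supported in $[0,1]$, so its convolution kernel is controlled in $L^2(G)$ by the Plancherel measure of $\mathcal L$ and Young's inequality applies (with $e^{i\tau\mathcal L}$ contributing nothing, being unitary on $L^2$), and to the second-layer estimate $\|F^{(2)}(\mathcal L)g\|_{L_u^1 L_x^2(G)}\lesssim 2^{j(d+1)/2}\|g\|_{L_u^2 L_x^\infty(G)}$, into which the entire loss is absorbed.

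This last estimate is where Assumption~A enters, and it is the main obstacle. I would decompose the convolution kernel of $F^{(2)}(\mathcal L)$ by central frequency, inserting $\sum_{M\in\mathbb Z}\Theta(2^M T)$ with $\Theta\in C_c^\infty((0,\infty))$ and $T=(-(T_1^2+\dots+T_{d_2}^2))^{1/2}$; Assumption~A then bounds $\|\,|u|^N\mathcal K_{F^{(2)}(\mathcal L)\Theta(2^M T)}\|_{L^2(G)}$ by $2^{M(N-d_2/2)}\|F^{(2)}\|_{L_N^2}$ for every $N\ge 0$. Feeding these weighted bounds into the group convolution, summing over the relevant dyadic central scales $M$ (finite propagation speed confines the range of $M$ and concentrates the kernels in $\{|x|\lesssim 2^j\}$) and over $N$, should produce the desired $L_u^2 L_x^\infty\to L_u^1 L_x^2$ bound, the exponent $(d+1)/2$ being what the resulting balancing of the first-layer $L^2$–$L^\infty$ loss against the central-variable $|u|^N$-decay produces — exactly as in the first-layer estimate. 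Without Assumption~A one can only estimate $|u|^N$ by a power of the homogeneous quasi-norm, which costs $2^{jQ/2}$ and recovers merely the threshold $\alpha>Q/2$ of Theorem~\ref{Bilinear Bochner-Riesz Main theorem}; this is precisely the single point at which the argument departs from the proof of Theorem~\ref{Theorem: Mixed norm estimate for first layer}, whose analogous first-layer weighted Plancherel estimate holds unconditionally. The most delicate technical point inside this step is that the group law $(y,v)^{-1}(x,u)=(x-y,\,u-v-\tfrac12[y,x])$ mixes the two layers, so the $L_x^2$-norm does not pass cleanly through the convolution, and the offending term $\tfrac12[y,x]$ must be absorbed using the concentration of the kernels in $\{|x|\lesssim 2^j\}$.
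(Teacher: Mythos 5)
Your overall architecture (dyadic decomposition near the spectral sphere, separation of the two spectral variables into tensor products, Cauchy--Schwarz in $x$ followed by H\"older in $u$, with Assumption~A supplying the second-layer weighted Plancherel input) points in the right direction, and the Fourier-integral separation is an acceptable substitute for the paper's Fourier-series decomposition \eqref{Fourier series decomposition}. But the step to which you reduce everything is not just unproven, it is false: the global linear estimate $\|F^{(2)}(\mathcal L)g\|_{L_u^1L_x^2(G)}\lesssim 2^{j(d+1)/2}\|g\|_{L_u^2L_x^\infty(G)}$ cannot hold for any nonzero spectral multiplier $F^{(2)}(\mathcal L)$. Such an operator is a group convolution and hence commutes with translations by central elements $(0,u_0)$, and both mixed norms are invariant under these translations; taking $g$ compactly supported in $u$ and summing $N$ far-separated central translates makes the right-hand side grow like $N^{1/2}$ while the left-hand side grows like $N$, a contradiction for large $N$ (the classical obstruction to $L^p\to L^q$ bounds with $q<p$ for translation-invariant operators). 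No amount of kernel decay or concentration in $u$ at scale $2^{2j}$ repairs this, because decay in the convolution kernel only pays off after the data have been localized in the central variable.

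That localization is precisely the machinery your proposal omits and which the paper's proof (Section \ref{Section: Mixed norm estimates}, mirrored for the second layer) cannot do without: one first splits the kernel over the product regions $A_{\theta_1}\times A_{\theta_2}$, then decomposes $f$, $g$ and the output into boxes of side $\sim 2^{j(1+\varepsilon)}$ in the first layer and $\sim 2^{2j(1+\varepsilon)}$ in the central variable, proves the localized single-box claim (the analogue of \eqref{Expression: Assumption for mixed norms}) — where the passage from $L_u^2$ to $L_u^1$ costs the measure factor $2^{jd_2(1+\varepsilon)}$ and the passage from $L_x^\infty$ to $L_x^2$ costs $2^{jd_1(1+\varepsilon)/2}$, which together with the $2^{-j\alpha}2^{j/2}$ from summing the separated pieces is exactly how the threshold $(d+1)/2$ arises (your write-up never actually derives it) — and then reassembles using the bounded overlapping property and H\"older in the box indices. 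Assumption~A enters not as a vague "weighted bound fed into the convolution" but at a specific place: it is the $|u|^N$-weighted Plancherel estimate needed to kill the error terms where the output central variable lies far from the central support of the localized piece of $g$, i.e.\ the second-layer analogue of the role played by Proposition \ref{Proposition: First layer weighted Plancherel} in the terms of type $I_{11}$ and $S_{13}$. Without inserting the box decomposition, the off-box error analysis, and the overlap/reassembly argument into your reduction, the proof does not go through.
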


To prove our theorems, we utilize the Fourier series decomposition of the bilinear Bochner-Riesz multiplier, a technique employed in \cite[Proposition 3.8]{Bernicot_Grafakos_Song_Yan_Bilinear_Bochner_Riesz_2015} and \cite[Theorem 3.2]{Liu_Wang_Bilinear_Bochner_Riesz_Non_Banach_2020}. Although one can lift the Euclidean technique to our setting, this approach only yields a smoothness threshold in terms of the homogeneous dimension $Q$ of the M\'etivier groups. The main challenge is to refine this and replace $Q$ with the topological dimension $d$ of $G$. Similar to linear setup, for sub-Laplacians on M\'etivier groups, one might consider employing suitable weights to reduce the dimension from $Q$ to $d$. We show that at some particular points the boundedness result can indeed be established with smoothness threshold expressed in terms of $d$, using weighted Plancherel estimates. In the linear setting, it was Fefferman and Stein's idea \cite{Fefferman_Spherical_multiplier_1973} to use the restriction estimates to obtain sharp results for the Bochner-Riesz multiplier. Similarly here also one might consider restriction type estimates in this setup. However, a weighted version of restriction-type estimates would be required. Unfortunately, such results cannot generally be expected to hold, as discussed in \cite[Section 8]{Niedorf_p_specific_Heisenber_group_2024}. To overcome this difficulty, we use ideas from \cite{Niedorf_Metivier_group_2023}, where the author studied $p$-specific Bochner-Riesz multipliers in the linear setting. However, adapting such techniques to the bilinear settings has its own technical challenges.

The rest of this paper is organized as follows. In Section \ref{Preliminaries}, we gather several well-known results related to the sub-Riemannian geometry of $G$, the spectral decomposition of $-J_{\lambda}^2$, and the integration of weights and homogeneous norms. Section \ref{Section: pointwise kernel estimate} focuses on the pointwise kernel estimates for Bochner-Riesz means, the weighted Plancherel estimates and a bilinear version of the weighted Plancherel estimates for the sub-Laplacian $\mathcal{L}$ with a first-layer weight. To prove Theorem \ref{Bilinear Bochner-Riesz Main theorem}, we decompose the bilinear Bochner-Riesz operator and the corresponding kernel in Section \ref{Section: Dyadic decomposition of Bochner-Riesz} and reduce the proof to several specific cases. Sections \ref{Section: Claim A} through \ref{Section: Proof of claim at 1 infinity} are devoted to establishing those particular cases. Section \ref{Section: Proof of theorem for restricted f and g} contains the proof of Theorem \ref{Bilinear Bochner-Riesz theorem with restricted f and g}, while in Section \ref{Section: Mixed norm estimates}, we present the proofs of Theorem \ref{Theorem: Mixed norm estimate for first layer} and Theorem \ref{Theorem: Mixed norm estimate for second layer}.

Throughout the article, we use standard notation. Let $\mathbb{N} = \{1,2, \ldots\}$ and $\mathbb{N}_0 = \mathbb{N} \cup \{0\}$. We use letter $C$ to indicate a positive constant that is independent of the main parameters, but may vary from line to line. When writing estimates, we use the notation $f \lesssim g$ to indicate $f \leq Cg$ for some $C > 0$, and whenever $f \lesssim g\lesssim f$ , we shall write $f \sim g$. We sometimes write $f \lesssim_{\epsilon} g$ to denote $f \leq C g$ where the constant $C$ may depend on the implicit constant $\epsilon$. For a Lebesgue measurable subset $E$ of $\mathbb{R}^d$, we denote by $\chi_E$ the characteristic function of the set $E$. Let $\Bar{B}$ denote the closure of a ball $B$. For any function $G$ on $\mathbb{R}$, define $\delta_R G(\eta) = G(R \eta) $ for $R>0$. Let $\mathcal{S}(G)$ denote the space of all Schwartz class functions on $G$, where we have identified $G \cong \mathbb{R}^d$. For $f,g \in \mathcal{S}(G)$, the group convolution of $f$ and $g$ is given by
\begin{align*}
    f*g(x,u) &= \int_G f(x',u') g((x',u')^{-1}(x,u)) \ d(x',u'), \quad (x,u) \in G.
\end{align*}

\section{Preliminaries}
\label{Preliminaries}
In this section, we collect some preliminary results which are well known in the literature; see, for example, \cite{Niedorf_Metivier_group_2023}, \cite{Martini_Muller_New_Class_Two_Step_Stratified_Groups_2014}, \cite{Martini_Muller_Wave_Metivier_2024}, \cite{Martini_Muller_spectral_mult_two_step_2016}, \cite{Martini_Muller_Golo_Spectral_Multiplier_Lower_Regularity_2023}, and \cite{Muller_Ricci_Nilpotent_1996} for further details. Let $\varrho$ denote the Carnot-Carath\'eodory distance on $G$, associated with the left-invariant vector fields $X_1, \ldots, X_{d_1}$, which satisfy the H\"ormander's bracket-generating condition. Therefore in view of the Chow-Rashevskii theorem (\cite[Proposition III.4.1]{Varopoulos_Saloff_Coulhon__Analysis_groups_1992}), the distance $\varrho$ defines a metric on $G$, which induces the Euclidean topology on $G$. Furthermore, by the left-invariant property of $X_1, \cdots, X_{d_1} $, the Carnot-Carath\'eodory distance  $\varrho$ is itself left-invariant, that is for any $(g,h) \in G$, we have
\begin{align*}
    \varrho((g,h)(x,u), (g,h)(x',u')) = \varrho((x,u), (x',u')), \quad \text{for all}\quad (x,u), (x',u') \in G .
\end{align*}
If we further set 
\begin{align*}
    |(x,u)| &:= \varrho((x,u),0) ,
\end{align*}
where $0=(0,0)$ is the identity element of the group $G$, then with respect to the family of automorphic dilations $\delta_t$ defined by
\begin{align}
\label{Definition: Dilation in Metivier groups}
    \delta_t(x,u) &= (t x, t^2 u), \quad t> 0 .
\end{align} 
$|(x,u)|$ satisfies $|\delta_t (x,u)| = t |(x,u)|$. Therefore, $|(x,u)|$ becomes a homogeneous norm in the sense of Folland and Stein \cite[p. 8]{Folland_Stein_Hardy_space_homogeneous_group_1982}. On the other hand, if one define
\begin{align*}
    \|(x,u)\| &:= (|x|^4 + |u|^2)^{1/4}, \quad (x,u) \in G ,
\end{align*}
then this also defines a homogeneous norm on $G$. Now since any two homogeneous norms on homogeneous groups are always equivalent \cite[Proposition 1.5]{Folland_Stein_Hardy_space_homogeneous_group_1982}, therefore, due to the left-invariance of $\varrho$, we have
\begin{align}
\label{Equivalent distance formula}
    \varrho((x,u), (x',u')) &\sim \|(x,u)^{-1}(x',u')\| .
\end{align}

Let us also mention that, for $t>0$, the heat kernel $\mathcal{K}_{\exp(-t\mathcal{L})}$ associated with the sub-Laplacians (see \cite{Varopoulos_Analysis_Lie_Group_1988}) satisfies the following bound.
\begin{align}
\label{Heat kernel bound for subLaplacians}
    |\mathcal{K}_{\exp(-t\mathcal{L})}(x,u)| &\leq C\, t^{-Q/2} \exp{\Big\{-c \tfrac{\|(x,u)\|^2}{t} \Big\}} .
\end{align}

We denote $B^{\varrho}((x,u), R)$ to be the ball ($\varrho$-ball) centered at $(x,u)$ and radius $R>0$ with respect to the Carnot-Carathe\'odory distance $\varrho$. Then volume of the ball satisfies
\begin{align}
\label{Measure of the ball}
    |B^{\varrho}((x,u), R)| &\sim R^Q |B^{\varrho}(0, 1)| ,
\end{align}
where $|\cdot|$ denote the Lebesgue measure and $Q=d_1+2d_2$ is the homogeneous dimension of the underlying space $G$. We call $d=d_1+d_2$ to be the topological dimension of $G$. In the sequel, we denote  $B^{\varrho}((x,u), R)$ simply by $B((x,u), R)$, which means the ball is taken with respect to the Carnot-Carathe\'odory distance $\varrho$. Note that since $G$ is a M\'etivier group, we always have $d_2 = \dim \mathfrak{g}_2 < \dim \mathfrak{g}_1 = d_1$. This follows easily from the fact that the map $\lambda \to \lambda([\cdot, x'])$ from $\mathfrak{g}_2^{*} \to (\mathfrak{g}_1/\mathbb{R}x')^{*}$ is injective for $x' \neq 0$.

Recall that $G$ is a M\'etivier group if and only if the skew-symmetric endomorphism $J_{\lambda}$ on $\mathfrak{g}_1$ is invertible for all $\lambda \in \mathfrak{g}_2^{*} \setminus \{0\}$. Consequently, $-J_{\lambda}^2=J_{\lambda}^* J_{\lambda}$ is self-adjoint and non-negative. The following proposition states that the family $J_{\lambda}$ admits a simultaneous spectral decomposition for all $\lambda$ belonging to a certain Zariski-open subset of $\mathfrak{g}_2^*$. 
\begin{proposition}\cite[Proposition 3.1]{Niedorf_Metivier_group_2023}
\label{Prop: Spectral decomposition of Jmu}, \cite[Lemma 5]{Martini_Muller_New_Class_Two_Step_Stratified_Groups_2014}
    There exists $\Lambda \in \mathbb{N}$, $\mathbf{r}=(r_1, \ldots, r_\Lambda) \in \mathbb{N}^\Lambda$, a non-empty and homogeneous Zariski-open subset $\mathfrak{g}_{2,r}^{*}$ of $\mathfrak{g}_2^{*}$, a function $\lambda \to \mathbf{b}^{\lambda}=(b_1^{\lambda}, \ldots, b_\Lambda^{\lambda}) \in [0, \infty)^{\Lambda}$ defined on $\mathfrak{g}_2^{*}$, functions $\lambda \mapsto P_{n,{\lambda}} $ on $\mathfrak{g}_{2,r}^{*}$ where $P_{n,{\lambda}} : \mathfrak{g}_1 \to \mathfrak{g}_1$ for $n \in \{1, \ldots, \Lambda\}$ and a function $\lambda \mapsto R_{\lambda} \in O(d_1)$ defined on $\mathfrak{g}_{2,r}^{*}$ such that
    \begin{align*}
        -J_{\lambda}^2 &= \sum_{n=1}^{\Lambda} (b_n^{\lambda})^2 P_{n,{\lambda}} \quad \text{for all} \quad \lambda \in \mathfrak{g}_{2,r}^{*},
    \end{align*}
    with $P_{n,{\lambda}} R_{\lambda} = R_{\lambda} P_n$, $J_{\lambda}(\text{ran}\  P_{n,{\lambda}}) \subseteq \text{ran}\  P_{n,{\lambda}}$, where $\text{ran}\  P_{n,{\lambda}}$ is the range of $P_{n,{\lambda}}$ for all $\lambda \in \mathfrak{g}_{2,r}^{*}$ and $P_n$ denotes the projection from $\mathbb{R}^{d_1}= \mathbb{R}^{2 r_1}\oplus \cdots \oplus \mathbb{R}^{2 r_\Lambda}$ onto the $n$-th layer for all $n \in \{1, \ldots, \Lambda\}$. Moreover
    \begin{enumerate}
        \item $\lambda \to b_n^{\lambda}$ are homogeneous of degree $1$, real analytic on $\mathfrak{g}_{2,r}^{*}$ and continuous on $\mathfrak{g}_2^{*}$, further it satisfies $b_n^{\lambda}>0$ for all $\lambda \in \mathfrak{g}_{2,r}^{*}$, $n \in \{1, \ldots, \Lambda\}$, and $b_n^{\lambda} \neq b_{n'}^{\lambda}$ if $n \neq n'$ for all $\lambda \in \mathfrak{g}_{2,r}^{*}$ and $n, n' \in \{1, \ldots, \Lambda\}$,
        \item $\lambda \to P_{n,{\lambda}}$ are homogeneous of degree $0$, (componentwise) real analytic functions on $\mathfrak{g}_{2,r}^{*}$, and the functions $P_{n,{\lambda}}$ are orthogonal projections on $\mathfrak{g}_1$ of rank $2 r_n$ for all $\lambda \in \mathfrak{g}_{2,r}^{*}$, such that the ranges are pairwise orthogonal. Moreover
        \begin{align}
        \label{HilbertSchmidt norm on dual}
            \sum_{n=1}^{\Lambda} r_n b_n^{\lambda} \sim \Big( \sum_{n=1}^{\Lambda} 2 r_n (b_n^{\lambda})^2 \Big)^{1/2} = (tr(J_{\lambda}^* J_{\lambda}))^{1/2} ,
        \end{align}
        and as a function of $\lambda$, this expression gives a norm induced by an inner product on $\mathfrak{g}_2^{*}$ .
        \item the functions $\lambda \to R_{\lambda}$ are Borel measurable on $\mathfrak{g}_{2,r}^{*}$, homogeneous of degree $0$ and there exists a family $(U_{\ell})_{\ell \in \mathbb{N}}$ of disjoint Euclidean open subsets $U_{\ell} \subseteq \mathfrak{g}_{2,r}^{*}$ such that the union is $\mathfrak{g}_{2,r}^{*}$, up to a set of measure zero and $\lambda \to R_{\lambda}$ is (componentwise) real analytic functions on each $U_{\ell}$.
   \end{enumerate}
\end{proposition}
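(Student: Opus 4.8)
The plan is to read off the decomposition from the analytic perturbation theory of the symmetric positive-semidefinite matrix family $\lambda \mapsto -J_\lambda^2 = J_\lambda^* J_\lambda$, using crucially that on a Métivier group this family is positive \emph{definite} away from the origin. First I would fix the orthonormal basis $X_1,\dots,X_{d_1}$ of $\mathfrak{g}_1$, so that the matrix entries of $J_\lambda$ are linear forms in $\lambda \in \mathfrak{g}_2^*$ and those of $-J_\lambda^2$ are homogeneous quadratic polynomials; consequently the characteristic polynomial $\chi_\lambda(s) = \det(s\,I_{d_1} + J_\lambda^2)$ has coefficients that are polynomials in $\lambda$, and the scaling $J_{t\lambda} = tJ_\lambda$ gives $\chi_{t\lambda}(s) = t^{2d_1}\chi_\lambda(s/t^2)$, so the roots in $s$ scale by $t^2$. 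Since $J_\lambda$ is real skew-symmetric and (Métivier condition) invertible for $\lambda \neq 0$, its complex eigenvalues occur in conjugate pairs $\pm i\mu$ with $\mu>0$; hence every eigenvalue of $-J_\lambda^2$ is positive and has even multiplicity, and $J_\lambda$ commutes with $-J_\lambda^2$, hence with all its spectral projections.

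Next I would let $\Lambda$ be the maximal number of distinct eigenvalues of $-J_\lambda^2$ over $\lambda \in \mathfrak{g}_2^* \setminus \{0\}$ and let $2r_1,\dots,2r_\Lambda$ (with $\sum_n 2r_n = d_1$) be the multiplicities, listed in increasing order of the corresponding eigenvalue, realized on a generic $\lambda$. I define $\mathfrak{g}_{2,r}^*$ to be the locus where $-J_\lambda^2$ has exactly $\Lambda$ distinct eigenvalues with this multiplicity vector; this is the non-vanishing set of an appropriate subresultant/discriminant-type polynomial attached to $\chi_\lambda$, hence Zariski-open, non-empty by the choice of $\Lambda$, and homogeneous by the rescaling identity for $\chi_\lambda$. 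On $\mathfrak{g}_{2,r}^*$ the $\Lambda$ eigenvalue branches of $-J_\lambda^2$ remain uniformly separated, so analytic perturbation theory (Rellich/Kato) yields real-analytic functions $\lambda \mapsto (b_n^\lambda)^2$; ordering them $b_1^\lambda < \cdots < b_\Lambda^\lambda$ pins them down, makes each $b_n^\lambda$ real-analytic on $\mathfrak{g}_{2,r}^*$, and — since the ordered eigenvalues of a continuously varying symmetric matrix vary continuously — extends them continuously to all of $\mathfrak{g}_2^*$ with values in $[0,\infty)$. Homogeneity of degree $1$ is the scaling identity above, positivity $b_n^\lambda>0$ on $\mathfrak{g}_{2,r}^*$ is the Métivier condition, and $b_n^\lambda \neq b_{n'}^\lambda$ for $n\neq n'$ on $\mathfrak{g}_{2,r}^*$ holds by construction.

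For the projections I would use the Riesz formula
\begin{align*}
P_{n,\lambda} \;=\; \frac{1}{2\pi i} \oint_{\gamma_n(\lambda)} \big( \zeta\, I_{d_1} + J_\lambda^2 \big)^{-1} \, d\zeta ,
\end{align*}
where $\gamma_n(\lambda)$ is a small circle enclosing only the eigenvalue $(b_n^\lambda)^2$ of $-J_\lambda^2$: this is manifestly real-analytic on $\mathfrak{g}_{2,r}^*$, homogeneous of degree $0$, and — because $-J_\lambda^2$ is symmetric — an orthogonal projection, of rank $2r_n$ on $\mathfrak{g}_{2,r}^*$ by definition of that set, with pairwise orthogonal ranges, giving $-J_\lambda^2 = \sum_n (b_n^\lambda)^2 P_{n,\lambda}$. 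Since $J_\lambda$ commutes with each $P_{n,\lambda}$, we get $J_\lambda(\mathrm{ran}\,P_{n,\lambda}) \subseteq \mathrm{ran}\,P_{n,\lambda}$. Taking traces in the decomposition yields $\sum_n 2r_n (b_n^\lambda)^2 = \mathrm{tr}(J_\lambda^* J_\lambda)$, a quadratic form in $\lambda$ that is positive definite precisely because the group is Métivier ($J_\lambda = 0 \iff \lambda = 0$); hence its square root is a norm induced by an inner product, and since $\sum_n r_n b_n^\lambda$ is another continuous, degree-one homogeneous function, positive for $\lambda\neq 0$, the equivalence \eqref{HilbertSchmidt norm on dual} follows from equivalence of homogeneous norms. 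Finally I fix once and for all reference projections $P_1,\dots,P_\Lambda$ onto mutually orthogonal coordinate subspaces of $\mathbb{R}^{d_1}$ of ranks $2r_1,\dots,2r_\Lambda$, and seek $R_\lambda \in O(d_1)$ with $P_{n,\lambda}R_\lambda = R_\lambda P_n$.

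To produce $R_\lambda$ I would cover $\mathfrak{g}_{2,r}^*$ by countably many Euclidean-open sets on each of which, for every $n$, the range of $P_{n,\lambda}$ admits a real-analytic orthonormal frame (apply Gram--Schmidt to a fixed choice of columns of $P_{n,\lambda}$ that stays linearly independent there, choosing the columns homogeneously); concatenating these frames over $n$ and aligning with the reference flag by a fixed permutation gives a real-analytic $O(d_1)$-valued map on each such set, homogeneous of degree $0$. Disjointifying the cover produces the family $(U_\ell)_{\ell}$ and a globally Borel-measurable $\lambda \mapsto R_\lambda$, real-analytic on each $U_\ell$; the intertwining $P_{n,\lambda}R_\lambda = R_\lambda P_n$, hence also $P_{n,\lambda}R_\lambda = R_\lambda P_n$ together with $J_\lambda$-invariance of the ranges, holds by construction. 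I expect the main obstacle to be exactly what forces $R_\lambda$ to be only measurable rather than analytic: the eigenbundles $\mathrm{ran}\,P_{n,\lambda}$ over $\mathfrak{g}_{2,r}^*$ need not be globally trivial, so no single global analytic frame exists, and one must pass to the countable decomposition $(U_\ell)$. The other delicate points — verifying that the generic-multiplicity locus $\mathfrak{g}_{2,r}^*$ is genuinely Zariski-open, non-empty and homogeneous, and that the ordered eigenvalue branches extend continuously across the singular set $\mathfrak{g}_2^* \setminus \mathfrak{g}_{2,r}^*$ — are handled by the polynomial nature of $\chi_\lambda$ and by Rellich-type continuity; the remainder is standard self-adjoint perturbation theory.
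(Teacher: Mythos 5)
The paper offers no proof of this proposition: it is imported verbatim from \cite[Proposition 3.1]{Niedorf_Metivier_group_2023} and \cite[Lemma 5]{Martini_Muller_New_Class_Two_Step_Stratified_Groups_2014}, so there is no in-paper argument to compare against. Your sketch essentially reconstructs the proof given in those sources: polynomial dependence of $\chi_\lambda$ on $\lambda$ and the scaling $J_{t\lambda}=tJ_\lambda$ to get homogeneity of the generic stratum and of the eigenvalue branches, Rellich/Kato perturbation theory and the Riesz contour formula for real-analyticity of $(b_n^\lambda)^2$ and $P_{n,\lambda}$ on $\mathfrak{g}_{2,r}^{*}$, continuity of the ordered eigenvalues across the singular set, the trace identity plus positive definiteness of $\lambda\mapsto \mathrm{tr}(J_\lambda^*J_\lambda)$ (which only needs $J_\lambda\neq 0$ for $\lambda\neq 0$) for \eqref{HilbertSchmidt norm on dual}, and local analytic orthonormal frames disjointified into the sets $U_\ell$ to get a Borel, piecewise-analytic $R_\lambda$. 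This is the right route and the outline is sound.

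Two points deserve more care than you give them. First, Zariski-openness of $\mathfrak{g}_{2,r}^{*}$: non-vanishing of the appropriate subdiscriminant only cuts out the locus where the \emph{number} of distinct eigenvalues is maximal; that the ordered multiplicity vector $\mathbf r$ is the \emph{same} on all Euclidean components of that locus (so that the fixed-pattern set really is the complement of an algebraic hypersurface rather than merely a semialgebraic open set) is not automatic from your one-line appeal to subresultants and needs an argument, e.g.\ constancy of multiplicities along analytic continuation in the complexified complement, or simply the way $\mathfrak{g}_{2,r}^{*}$ is defined in \cite{Martini_Muller_New_Class_Two_Step_Stratified_Groups_2014}. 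Second, two small repairs: $\sum_n r_n b_n^\lambda$ is homogeneous and continuous but not a norm, so the equivalence in \eqref{HilbertSchmidt norm on dual} should be justified by homogeneity plus compactness of the unit sphere rather than by ``equivalence of homogeneous norms''; and while your Gram--Schmidt frames do give $P_{n,\lambda}R_\lambda=R_\lambda P_n$ as stated, the matrices $R_\lambda$ are later used in the functional calculus \eqref{Functional calculus for sub Laplacian}, for which the references choose the frames compatibly with the complex structure $b_n^{-\lambda\,}J_\lambda$ on each eigenspace (a symplectic Gram--Schmidt, e.g.\ $v, (b_n^\lambda)^{-1}J_\lambda v,\dots$); if you want the proposition to serve its role in the paper, that refinement should be built into your frame construction, though it changes nothing else in your argument.
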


The following lemma plays an important role in our subsequent proofs.
\begin{lemma}
\label{Lemma: Decomposition of ball}
    Let $R>0$. If $\varrho((a,b), 0) \leq K R$ for some $K>0$, then there exists a constant $C>0$ such that
    \begin{align*}
        B((a, b),R) \subseteq B^{|\cdot|}(a, C R) \times B^{|\cdot|}(b, C R^2) \subseteq \mathbb{R}^{d_1} \times \mathbb{R}^{d_2} ,
    \end{align*}
    where $B^{|\cdot|}(a, R)$ denotes the ball of radius $R$ and centered at $a$ with respect to Euclidean distance.

    In particular, there exists a constant $C>0$ such that
    \begin{align*}
        B(0, R) \subseteq B^{|\cdot|}(0, C R) \times B^{|\cdot|}(0, C R^2) \subseteq \mathbb{R}^{d_1} \times \mathbb{R}^{d_2} .
    \end{align*}
    \end{lemma}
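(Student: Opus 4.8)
\textbf{Proof proposal for Lemma \ref{Lemma: Decomposition of ball}.}

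The plan is to reduce everything to the equivalence \eqref{Equivalent distance formula} between the Carnot--Carath\'eodory distance $\varrho$ and the homogeneous norm $\|(x,u)\| = (|x|^4 + |u|^2)^{1/4}$, together with the explicit form of the group law. First I would take a point $(x,u) \in B((a,b),R)$, so that $\varrho((a,b),(x,u)) \le R$; by left-invariance of $\varrho$ and \eqref{Equivalent distance formula} this means $\|(a,b)^{-1}(x,u)\| \lesssim R$. Writing the group inverse and product explicitly, $(a,b)^{-1} = (-a, -b)$ and
\begin{align*}
    (a,b)^{-1}(x,u) = \big(x - a,\ u - b - \tfrac{1}{2}[a, x]\big),
\end{align*}
so the bound $\|(a,b)^{-1}(x,u)\| \lesssim R$ yields both $|x - a| \lesssim R$ and $|u - b - \tfrac12[a,x]| \lesssim R^2$. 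The first inequality is exactly the claim $x \in B^{|\cdot|}(a, CR)$ for a suitable $C$.

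For the central variable, the remaining task is to control the correction term $\tfrac12[a,x]$. Here I would use the hypothesis $\varrho((a,b),0) \le KR$, which via \eqref{Equivalent distance formula} gives $\|(a,b)\| \lesssim KR$, hence $|a| \lesssim_K R$. Combining with $|x| \le |x-a| + |a| \lesssim_K R$, the bracket $[a,x]$ (a bilinear map $\mathfrak{g}_1 \times \mathfrak{g}_1 \to \mathfrak{g}_2$, hence bounded by a constant times $|a||x|$ in the chosen inner product) satisfies $|[a,x]| \lesssim_K R^2$. Therefore $|u - b| \le |u - b - \tfrac12[a,x]| + \tfrac12|[a,x]| \lesssim_K R^2$, which is the desired inclusion $u \in B^{|\cdot|}(b, CR^2)$ after adjusting the constant. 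This proves $B((a,b),R) \subseteq B^{|\cdot|}(a, CR) \times B^{|\cdot|}(b, CR^2)$.

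The ``in particular'' statement is the special case $(a,b) = 0$, for which the correction term vanishes identically and the argument is immediate: $\varrho((x,u),0) \le R$ gives $\|(x,u)\| \lesssim R$, hence $|x| \lesssim R$ and $|u| \lesssim R^2$ directly, with no need for the auxiliary hypothesis. I do not anticipate a serious obstacle here; the only mild point requiring care is bookkeeping the dependence of constants on $K$ and confirming that the bilinear bracket is indeed bounded on bounded sets (immediate since $\mathfrak{g}$ is finite-dimensional and $[\cdot,\cdot]$ is bilinear), so the constant $C$ in the conclusion may legitimately depend on $K$ — which is harmless for all later applications where $K$ is fixed.
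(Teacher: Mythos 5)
Your proposal is correct and follows essentially the same route as the paper's proof: translate to the origin via the group law, use the equivalence $\varrho((a,b),(x,u)) \sim \|(a,b)^{-1}(x,u)\|$ to get $|x-a|\lesssim R$ and $|u-b-\tfrac12[a,x]|\lesssim R^2$, and then control the bracket term by bilinearity together with $|a|\lesssim_K R$ from the hypothesis. The only (harmless) cosmetic difference is that you track the $K$-dependence of the constant explicitly, which the paper leaves implicit.
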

\begin{proof}
    For $(x,u) \in B((a, b),R)$ we have $\varrho((x,u), (a,b)) \leq R$. So that by \eqref{Equivalent distance formula}, we also have $\|(a,b)^{-1}(x,u)\| \leq C R$, for some $C>0$. Therefore,
    \begin{align*}
        (|x-a|^4 + |u-b-\tfrac{1}{2}[a, x]|^2)^{1/4} &\leq C R .
    \end{align*}
    From this we can easily see
    \begin{align*}
        |x-a| \leq C R \quad \text{and} \quad |u-b| \leq C R^2 + \tfrac{1}{2}|[a,x]| .
    \end{align*}
    Note that
    \begin{align*}
        |[a,x]| \leq C_0 |a||x| \quad \text{with} \quad C_0 =\sup_{a,x \neq 0} \tfrac{|[a,x]|}{|a||x|} .
    \end{align*}
    On the other hand, the assumption $\varrho((a,b), 0) \leq K R$ implies $|a| \leq K R$. Therefore, we also have $|x| \leq C R$. Consequently, we get $|[a,x]| \leq C R^2$. Hence, we have $|u-b| \leq C R^2$. This completes the proof of the lemma.
\end{proof}

The following two results are about the integration of weights and homogeneous norms.
\begin{lemma}
\label{Lemma: Integral of weight over ball}
Suppose $0\leq \gamma < d_1 $. Then for any $R>0$, we have
\begin{align*}
    \int_{B((a,b), R)} \frac{d(x,u)}{|x|^{\gamma}} &\leq C R^{Q-\gamma} .
\end{align*}
\end{lemma}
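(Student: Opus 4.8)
The plan is to reduce the estimate on a $\varrho$-ball to a computation on a Euclidean product box, exactly as set up by Lemma \ref{Lemma: Decomposition of ball}, and then to separate the variables. First I would split into two cases according to the size of the center. If $\varrho((a,b),0)\leq R$, then Lemma \ref{Lemma: Decomposition of ball} applies with $K=1$, giving
\[
    B((a,b),R)\subseteq B^{|\cdot|}(a,CR)\times B^{|\cdot|}(b,CR^2)\subseteq \mathbb{R}^{d_1}\times\mathbb{R}^{d_2}.
\]
Hence
\[
    \int_{B((a,b),R)}\frac{d(x,u)}{|x|^{\gamma}}
    \leq \Big(\int_{B^{|\cdot|}(b,CR^2)}du\Big)\Big(\int_{B^{|\cdot|}(a,CR)}\frac{dx}{|x|^{\gamma}}\Big)
    \leq C R^{2d_2}\int_{B^{|\cdot|}(a,CR)}\frac{dx}{|x|^{\gamma}}.
\]
For the remaining $x$-integral, the worst case is $a=0$: since $\gamma<d_1$, the function $|x|^{-\gamma}$ is locally integrable, and by passing to polar coordinates in $\mathbb{R}^{d_1}$,
\[
    \int_{B^{|\cdot|}(0,CR)}\frac{dx}{|x|^{\gamma}}=c_{d_1}\int_0^{CR}s^{d_1-1-\gamma}\,ds=C R^{d_1-\gamma},
\]
and for general $a$ this integral is dominated by the one centered at the origin (translate so that $0$ lies in the ball, or simply note $B^{|\cdot|}(a,CR)\subseteq B^{|\cdot|}(0,|a|+CR)\subseteq B^{|\cdot|}(0,CR)$ using $|a|\leq R$). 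Combining, the bound is $C R^{2d_2}R^{d_1-\gamma}=C R^{Q-\gamma}$ since $Q=d_1+2d_2$.

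For the case $\varrho((a,b),0)>R$, I would reduce to the previous case by a covering argument together with left-invariance of $\varrho$. Cover $B((a,b),R)$ by finitely many (a number depending only on the group, via the doubling property \eqref{Measure of the ball}) $\varrho$-balls of radius $R$; alternatively, and more cleanly, observe that the statement is really only used for $R$ comparable to $\varrho((a,b),0)$ up to a fixed constant $K$ in the applications, so Lemma \ref{Lemma: Decomposition of ball} applies directly with that $K$ and the same product-box argument goes through with $C$ now depending on $K$. In either formulation the $x$-integral is again bounded by $\int_{B^{|\cdot|}(0,C R)}|x|^{-\gamma}\,dx= C R^{d_1-\gamma}$ (using $|a|\leq KR$), and the $u$-integral by $C R^{2d_2}$, yielding $C R^{Q-\gamma}$.

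The only genuine point requiring the hypothesis is the local integrability of $|x|^{-\gamma}$ near the origin, which is precisely where $\gamma<d_1$ enters; the rest is bookkeeping with the decomposition of the ball and the homogeneity of Lebesgue measure under the Euclidean scaling of each layer. I do not expect any real obstacle here — the main (mild) care is to make sure the product-box inclusion is invoked with the correct relation between $R$ and the size of the center, and to absorb the harmless translation in the $x$-variable into the origin-centered integral.
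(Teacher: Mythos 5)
Your Case 1 is correct and is essentially the paper's computation, but Case 2 ($\varrho((a,b),0)>R$) is a genuine gap, and neither of your two proposed fixes closes it. The product-box inclusion of Lemma \ref{Lemma: Decomposition of ball} really needs the hypothesis $\varrho((a,b),0)\leq KR$: for $(x,u)\in B((a,b),R)$ one only gets $|u-b|\leq CR^{2}+\tfrac12|[a,x]|$, and when $|a|\gg R$ the commutator term can be of size $|a|R\gg R^{2}$, so $B((a,b),R)$ is \emph{not} contained in $B^{|\cdot|}(a,CR)\times B^{|\cdot|}(b,CR^{2})$ and "the same product-box argument" does not go through. Covering $B((a,b),R)$ by boundedly many $\varrho$-balls of radius $R$ does not help: the new centers lie within $\varrho$-distance $2R$ of $(a,b)$, so they violate the hypothesis just as badly. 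Finally, the claim that the lemma is only ever invoked with $R\gtrsim\varrho((a,b),0)$ is not accurate: in \eqref{Estimate of first L2 norm in 2 infinity} (and again in \eqref{Estimate: kernel estimate for infinity infinity}) the weight is $|z-x|^{-2\gamma}$ with an unrestricted $\sup_{x}$, and after left-translating by $(x,0)^{-1}$ to bring the weight to the origin, the ball containing $\supp g_{n_2}^j$ becomes a $\varrho$-ball centered at a point whose first-layer component is $a_{n_2}-x$, which can be arbitrarily large compared with $R$; in any case the lemma is stated, and must be proved, for arbitrary $(a,b)$.

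The paper sidesteps the case split by translating \emph{first}: by left-invariance of $\varrho$ and of the measure, $\int_{B((a,b),R)}|x|^{-\gamma}\,d(x,u)=\int_{B(0,R)}|x-a|^{-\gamma}\,d(x,u)$ (up to replacing $a$ by $-a$), so only the unconditional ``in particular'' part of Lemma \ref{Lemma: Decomposition of ball} (ball centered at the identity) is needed; the arbitrariness of the center is pushed into the harmless Euclidean shift of the weight, and since $|x|^{-\gamma}$ is radially decreasing, $\int_{B^{|\cdot|}(0,CR)}|x-a|^{-\gamma}\,dx\leq\int_{B^{|\cdot|}(0,CR)}|x|^{-\gamma}\,dx\leq CR^{d_1-\gamma}$ uniformly in $a$ (here $\gamma<d_1$ enters), while the $u$-integral contributes $CR^{2d_2}$, giving $CR^{Q-\gamma}$. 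If you want to keep your case split instead, Case 2 can be patched directly: by \eqref{Equivalent distance formula}, $(x,u)\in B((a,b),R)$ forces $|x-a|\leq CR$ and $|u-b-\tfrac12[a,x]|\leq CR^{2}$, so either $|a|\geq 2CR$, in which case $|x|\geq |a|/2\gtrsim R$ and the weight is $\lesssim R^{-\gamma}$ on the ball so \eqref{Measure of the ball} finishes, or $|a|\leq 2CR$, in which case the $u$-slices have measure $\leq CR^{2d_2}$ uniformly in $x$ and Fubini reduces matters to the first-layer integral as in your Case 1.
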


\begin{proof}
Using Lemma \eqref{Lemma: Decomposition of ball} for any $0\leq \gamma < d_1 $, we get
\begin{align*}
    \int_{B((a,b), R)} \frac{d(x, u)}{|x|^{\gamma}} = \int_{B(0, R)} \frac{d(x, u)}{|x-a|^{\gamma}} &\leq C \int_{B^{|\cdot|}(0, C R)} \int_{B^{|\cdot|}(0, C R^2)} \frac{dx \ du}{|x-a|^{\gamma}} \\
    &\leq C \int_{B^{|\cdot|}(a, C R)} \frac{dx}{|x|^{\gamma}} \int_{B^{|\cdot|}(0, C R^2)} du \leq C R^{Q - \gamma} .
\end{align*}  
\end{proof}

\begin{lemma}
\label{lemma: Estimate of distance on outside ball}
    Let $R> 0$. Then for any $N> Q$, we have
    \begin{align*}
        \int_{\|(x,u)\| > R} \frac{d(x,u)}{\big( 1 + \|(x,u)\| \big)^N} & \leq C R^{-N + Q} .
    \end{align*}
\end{lemma}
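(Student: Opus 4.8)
The plan is to reduce the integral over the region $\{\|(x,u)\|>R\}$ to a sum of dyadic annuli and use the volume growth of balls. First I would write the region as a disjoint union $\{\|(x,u)\|>R\} = \bigcup_{j\geq 0} A_j$, where $A_j := \{2^j R < \|(x,u)\| \leq 2^{j+1} R\}$. On each annulus $A_j$ the integrand is controlled: since $\|(x,u)\| > 2^j R \geq R$, we have $1 + \|(x,u)\| \geq \|(x,u)\| > 2^j R$, so $(1+\|(x,u)\|)^{-N} \leq (2^j R)^{-N}$ on $A_j$.

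Next I would estimate the measure of each annulus. Since $\|\cdot\|$ is a homogeneous norm equivalent to $|\cdot| = \varrho(\cdot,0)$ (by \eqref{Equivalent distance formula} applied at the origin, or equivalently by equivalence of homogeneous norms), the set $A_j$ is contained in a $\varrho$-ball $B(0, C 2^{j+1} R)$ for some absolute constant $C$, and hence by \eqref{Measure of the ball} we get $|A_j| \leq |B(0, C 2^{j+1}R)| \sim (C 2^{j+1} R)^Q \lesssim (2^j R)^Q$. Combining the two bounds,
\begin{align*}
    \int_{\|(x,u)\| > R} \frac{d(x,u)}{\big(1 + \|(x,u)\|\big)^N}
    &= \sum_{j \geq 0} \int_{A_j} \frac{d(x,u)}{\big(1 + \|(x,u)\|\big)^N}
    \leq \sum_{j \geq 0} (2^j R)^{-N} |A_j| \\
    &\lesssim \sum_{j \geq 0} (2^j R)^{-N} (2^j R)^Q
    = R^{Q-N} \sum_{j \geq 0} 2^{j(Q-N)} .
\end{align*}
Since $N > Q$, the geometric series $\sum_{j\geq 0} 2^{j(Q-N)}$ converges to a constant depending only on $N$ and $Q$, which yields the claimed bound $C R^{-N+Q}$.

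There is really no substantial obstacle here; the only point requiring a little care is making sure the implied constants in the volume estimate \eqref{Measure of the ball} and in the equivalence of homogeneous norms are genuinely independent of $R$ and $j$, which is immediate from the homogeneity of the dilations $\delta_t$. An alternative, even more elementary route avoiding dyadic decomposition is to pass to polar-type coordinates adapted to $\delta_t$: writing the measure in terms of the homogeneous norm $r = \|(x,u)\|$, one has $d(x,u) \sim r^{Q-1}\,dr\,d\sigma$ on the appropriate unit sphere, so the integral becomes $\lesssim \int_R^\infty r^{Q-1}(1+r)^{-N}\,dr \leq \int_R^\infty r^{Q-1-N}\,dr = \frac{R^{Q-N}}{N-Q}$, again using $N > Q$ for convergence at infinity. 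I would present the dyadic argument as the main proof since it only relies on results already established in the excerpt, namely \eqref{Equivalent distance formula} and \eqref{Measure of the ball}.
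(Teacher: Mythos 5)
Your proposal is correct and coincides with the paper's own argument, which likewise decomposes the region into dyadic annuli $\{2^k R < \|(x,u)\| \leq 2^{k+1}R\}$, bounds the integrand by $(2^k R)^{-N}$ and the annulus measure by $C(2^k R)^Q$, and sums the geometric series using $N>Q$. The extra polar-coordinate remark is fine but unnecessary.
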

\begin{proof}
Decomposing the integral into annular region for $N>Q$, we can see
\begin{align*}
  \int_{\|(x,u)\| > R} \frac{d(x,u)}{\big( 1 + \|(x,u)\| \big)^N} &= \sum_{k = 0} ^\infty \int_{2^k R < \|(x,u)\| \leq 2^{k+ 1} R}  \frac{d(x,u)}{\big( 1 + \|(x,u)\| \big)^N} \\
  & \leq C \sum_{k = 0} ^\infty \frac{1}{(2^k R)^N} (2^k R)^{Q} \leq C R^{-N + Q} .
\end{align*} 
\end{proof}

\section{Kernel Estimates}
\label{Section: pointwise kernel estimate}
Recall that from \eqref{Bilinear Bochner-Riesz menas}, for $f, g \in \mathcal{S}(G)$, the bilinear Bochner-Riesz mean $\mathcal{B}^{\alpha}$ is defined by
\begin{align*}
    \mathcal{B}^{\alpha}(f,g)(x,u) &= \frac{1}{(2\pi)^{2 d_2}} \int_{\mathfrak{g}_{2,r}^{*}} \int_{\mathfrak{g}_{2,r}^{*}} e^{i \langle \lambda_1 + \lambda_2 , u \rangle} \sum_{\mathbf{k}_1, \mathbf{k}_2 \in \mathbb{N}^\Lambda} \left(1- \eta_{\mathbf{k}_1}^{\lambda_1}- \eta_{\mathbf{k}_2}^{\lambda_2} \right)_{+}^{\alpha} \\
    &\nonumber \hspace{1cm} \left[f^{\lambda_1} \times_{\lambda_1} \varphi_{\mathbf{k}_1}^{\mathbf{b}^{\lambda_1}, \mathbf{r}_1}(R_{\lambda_1}^{-1}\cdot) \right](x) \left[g^{\lambda_2} \times_{\lambda_2} \varphi_{\mathbf{k}_2}^{\mathbf{b}^{\lambda_2}, \mathbf{r}_2}(R_{\lambda_2}^{-1}\cdot) \right](x) \  d\lambda_1 \  d\lambda_2.
\end{align*}
Consequently, we can express the operator $\mathcal{B}^{\alpha}$ in terms of its kernel as
\begin{align}
\label{Equation: Kernel representation of Bochner-Riesz}
    \mathcal{B}^{\alpha}(f,g)(x,u) &= \int_G \int_G \mathcal{K}^{\alpha}((y,t)^{-1}(x,u), (z,s)^{-1}(x,u)) f(y,t) g(z,s) \, d(y, t) \, d(z, s) ,
\end{align}
where $\mathcal{K}^{\alpha}$ denotes the associated kernel of the  bilinear Bochner-Riesz kernel, given by
\begin{align}
\label{Kernel expression}
    \mathcal{K}^{\alpha}((y,t), (z,s)) &= \frac{1}{(2\pi)^{2 d_2}} \int_{\mathfrak{g}_{2,r}^{*}} \int_{\mathfrak{g}_{2,r}^{*}} e^{i \langle \lambda_1, t \rangle} e^{i \langle \lambda_2, s \rangle} \sum_{\mathbf{k}_1, \mathbf{k}_2 \in \mathbb{N}^\Lambda} \left(1- \eta_{\mathbf{k}_1}^{\lambda_1}- \eta_{\mathbf{k}_2}^{\lambda_2} \right)_{+}^{\alpha} \\
    &\nonumber \hspace{4cm} \times \varphi_{\mathbf{k}_1}^{\mathbf{b}^{\lambda_1}, \mathbf{r}_1}(R_{\lambda_1}^{-1} y) \, \varphi_{\mathbf{k}_2}^{\mathbf{b}^{\lambda_2}, \mathbf{r}_2}(R_{\lambda_2}^{-1} z) \ d\lambda_1 \  d\lambda_2 .
\end{align}

Let us set $m(\eta_1, \eta_2)= (1-\eta_1-\eta_2)_{+}^{\alpha}$. Also, let $\mathcal{L}_1 := \mathcal{L} \otimes I$ and $\mathcal{L}_2 := I \otimes \mathcal{L}$. It follows that the operators $\mathcal{L}_1$ and $\mathcal{L}_2$ commute strongly (see \cite[Lemma 7.24]{Konrad_Unbounded_Selfadjoint_operator_2012}). Then, bivariate spectral theorem (see \cite[Theorem 5.21]{Konrad_Unbounded_Selfadjoint_operator_2012}) allows us to consider the operator given by
\begin{align}
\label{Bivariate bochner-Riesz multiplier}
    & m(\mathcal{L}_1,\mathcal{L}_2)(f \otimes g)((x,u),(x',u'))
    = \frac{1}{(2\pi)^{2 d_2}} \int_{\mathfrak{g}_{2,r}^{*}} \int_{\mathfrak{g}_{2,r}^{*}} e^{i \langle \lambda_1 , u \rangle} e^{i \langle \lambda_2 , u' \rangle} \sum_{\mathbf{k}_1, \mathbf{k}_2 \in \mathbb{N}^N} m(\eta_{\mathbf{k}_1}^{\lambda_1}, \eta_{\mathbf{k}_2}^{\lambda_2}) \\
    &\nonumber \hspace{3cm} \times \left[f^{\lambda_1} \times_{\lambda_1} \varphi_{\mathbf{k}_1}^{\mathbf{b}^{\lambda_1}, \mathbf{r}_1}(R_{\lambda_1}^{-1}\cdot) \right](x) \left[g^{\lambda_2} \times_{\lambda_2} \varphi_{\mathbf{k}_2}^{\mathbf{b}^{\lambda_2}, \mathbf{r}_2}(R_{\lambda_2}^{-1}\cdot) \right](x') \  d\lambda_1 \  d\lambda_2 .
\end{align}

If we take $f, g \in \mathcal{S}(G)$, then it is straightforward to check that the above expression for $m(\mathcal{L}_1,\mathcal{L}_2)(f \otimes g)((x,u),(x',u'))$ is well-defined everywhere in $G \times G$. In fact, an application of Lebesgue dominated convergence theorem, shows that $m(\mathcal{L}_1,\mathcal{L}_2)(f \otimes g)((x,u),(x',u'))$ is continuous on $G \times G$. This implies that the restriction of $m(\mathcal{L}_1,\mathcal{L}_2)(f \otimes g)$ to the diagonal $\{((x,u), (x,u)) : (x,u) \in G\}$ is well-defined and $m(\mathcal{L}_1,\mathcal{L}_2)(f \otimes g)((x,u),(x,u))$ coincides with the bilinear Bochner-Riesz operator $\mathcal{B}^{\alpha}(f,g)(x,u)$.

Choose a non-negative function $\Psi \in C_c  ^{\infty}(\tfrac{1}{2}, 2)$ such that $\sum_{j \in \mathbb{Z}} \Psi(2^j t) = 1$ for $t> 0$. Then for $0\leq \eta_1, \eta_2 \leq 1$, we decompose the bilinear Bochner-Riesz multiplier as
\begin{align*}
    (1-\eta_1-\eta_2 )_{+}^{\alpha} = \sum_{j\in \mathbb{Z}} (1-\eta_1-\eta_2 )_+^{\alpha} \Psi \big(2^j (1 - \eta_1 - \eta_2)\big) = \sum_{j\in \mathbb{Z}} \Psi_j^{\alpha}(\eta_1, \eta_2) ,
\end{align*}
where 
\begin{align*}
    \Psi_j^{\alpha} (\eta_1, \eta_2) :=  (1-\eta_1-\eta_2 )_+^{\alpha} \Psi \big(2^j (1 - \eta_1 - \eta_2)\big) .
\end{align*}
Note that $\Psi_j^{\alpha} =0$ for $j <0$. Thus, for 
$f, g \in \mathcal{S}(G)$, based on  the above decomposition,   $\mathcal{B}^{\alpha}$ can be written as
\begin{align}
\label{Equation: Dyadic decomposition of Bochner-Riesz}
    \mathcal{B}^{\alpha} = \sum_{j=0}^{\infty} \mathcal{B}_j^{\alpha} ,
\end{align}
where
\begin{align}
\label{Expression after dyadic decomposition}
    \mathcal{B}_j^{\alpha}(f,g)(x,u) &= \frac{1}{(2\pi)^{2 d_2}} \int_{\mathfrak{g}_{2,r}^{*}} \int_{\mathfrak{g}_{2,r}^{*}} e^{i \langle \lambda_1 + \lambda_2 , u \rangle} \sum_{\mathbf{k}_1, \mathbf{k}_2 \in \mathbb{N}^\Lambda} \Psi_j^{\alpha}(\eta_{\mathbf{k}_1}^{\lambda_1}, \eta_{\mathbf{k}_2}^{\lambda_2}) \\
    &\nonumber \hspace{0.5cm} \times \left[f^{\lambda_1} \times_{\lambda_1} \varphi_{\mathbf{k}_1}^{\mathbf{b}^{\lambda_1}, \mathbf{r}_1}(R_{\lambda_1}^{-1}\cdot) \right](x) \left[g^{\lambda_2} \times_{\lambda_2} \varphi_{\mathbf{k}_2}^{\mathbf{b}^{\lambda_2}, \mathbf{r}_2}(R_{\lambda_2}^{-1}\cdot) \right](x) \  d\lambda_1 \  d\lambda_2 .
\end{align}

We have the following pointwise kernel estimate of $\mathcal{B}_j^{\alpha}$, which will be useful later in our proofs.
\begin{lemma}
\label{Lemma: Pointwise kernel estimate for Bj}
Let $\mathcal{K}_j^{\alpha}$ denote the kernel corresponding to the operator $\mathcal{B}_j^{\alpha}$. Then for all $\beta_1, \beta_2 \geq 0$ and $\epsilon>0$, we have 
    \begin{align*}
     \left|\mathcal{K}_j^{\alpha}((y,t),(z,s))\right| (1+\|(y,t)\|)^{\beta_1} (1+\|(z,s)\|)^{\beta_2} &\leq C\, 2^{j(\beta_1 +\beta_2 +\epsilon-\alpha-1/2)} ,
\end{align*}
for some constant $C>0$, independent of $j$.
\end{lemma}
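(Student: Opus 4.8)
The strategy is to reduce the bilinear kernel estimate to the known linear pointwise kernel estimates for the single-variable Bochner-Riesz pieces, using the tensor-product structure of the multiplier $\Psi_j^\alpha(\eta_1,\eta_2)$ in the two spectral variables $\mathcal{L}_1$ and $\mathcal{L}_2$. First I would write $\mathcal{K}_j^\alpha((y,t),(z,s))$ as the diagonal restriction of the kernel of $m_j(\mathcal{L}_1,\mathcal{L}_2):=\Psi_j^\alpha(\mathcal{L}_1,\mathcal{L}_2)$, so that formally $\mathcal{K}_j^\alpha((y,t),(z,s)) = \mathcal{K}_{\Psi_j^\alpha(\mathcal{L}_1,\mathcal{L}_2)}((y,t),0,(z,s),0)$. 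The key device is a Fourier-series / integral decomposition of the bump $\Psi_j^\alpha$ in its single variable: writing $\Psi_j^\alpha(\eta_1,\eta_2) = \psi(2^j(1-\eta_1-\eta_2))$ with $\psi(\sigma)=\sigma^\alpha\Psi(\sigma)$ supported in $(1/2,2)$, one expands $\psi(2^j(1-\eta_1-\eta_2))$ (after a suitable smooth cutoff localizing $\eta_1+\eta_2$ near $1$) as a superposition $\int \widehat{\psi}(\tau) e^{i\tau 2^j(1-\eta_1-\eta_2)}\,d\tau$ (up to normalization), which factors as $e^{i\tau 2^j}\,e^{-i\tau 2^j\eta_1}\,e^{-i\tau 2^j\eta_2}$. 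This converts the bilinear multiplier into an integral over $\tau$ of products of two one-parameter spectral multipliers $e^{-i\tau 2^j \mathcal{L}_1}$ and $e^{-i\tau 2^j \mathcal{L}_2}$, each further multiplied by a fixed compactly supported cutoff of $\mathcal{L}$; since $\widehat\psi$ is rapidly decaying, the $\tau$-integral is effectively over $|\tau|\lesssim 2^{j\epsilon}$ at the cost of an arbitrarily small power loss.

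Next I would invoke the linear theory: for each fixed $\tau$ with $|\tau|\lesssim 2^{j\epsilon}$, the convolution kernel of $\chi(\mathcal{L})\,e^{-i\tau 2^j\mathcal{L}}$ (with $\chi\in C_c^\infty$ equal to $1$ on a neighbourhood of $[0,1]$) obeys weighted $L^\infty$ bounds of the form $|\mathcal{K}(y,t)|(1+\|(y,t)\|)^{\beta}\lesssim_\beta C_\tau$ with $C_\tau$ polynomial in $2^j$; more precisely, the finite-propagation-speed property of the wave operator associated to $\mathcal{L}$ (so that $\mathcal{K}_{\chi(\mathcal{L})e^{-i\tau 2^j\mathcal{L}}}$ is supported where $\|(y,t)\|\lesssim \tau 2^j$, or more carefully in $\|(y,t)\|\lesssim$ something polynomial in $2^j$ after exploiting $\|\psi\|$-type bounds) together with the Plancherel / Sobolev-embedding estimate from the heat kernel bound \eqref{Heat kernel bound for subLaplacians} gives $\|\mathcal{K}_{\chi(\mathcal{L})e^{-i\tau 2^j\mathcal{L}}}\|_\infty \lesssim (1+|\tau|2^j)^{Q/2}$, say. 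The precise exponent is not what matters here: one needs the $1/2$ in the final bound to come from the single scalar Bochner-Riesz factor $\psi$, namely the one-dimensional Sobolev embedding $\|\psi(2^j(1-\cdot))\|_{L^2_\beta}$-type gain, which is exactly where the $2^{j(1/2+\epsilon)}$ in the statement originates (an extra $2^{j/2}$ beyond the weights $\beta_1,\beta_2$). So the plan is: pull out the weights $(1+\|(y,t)\|)^{\beta_1}(1+\|(z,s)\|)^{\beta_2}$ inside the $\tau$-integral, absorb each weight into the corresponding linear kernel to produce a factor $2^{j\beta_i}$ (times harmless constants), and bound what remains by a one-dimensional $L^1$ or $L^2$ Sobolev estimate on $\psi$ contributing the single $2^{j(1/2+\epsilon)}$.

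I would organize this as: (i) set up the Fourier-integral (or Fourier-series on a period $\sim 2^j$) representation of $\Psi_j^\alpha$, carefully inserting cutoffs so that the twisted-convolution sums in \eqref{Kernel expression} remain absolutely convergent and commute with the $\tau$-integral; (ii) factor the resulting operator as $\int \widehat\psi(\tau)\,e^{i\tau 2^j}\, \big(\chi(\mathcal{L}_1)e^{-i\tau 2^j\mathcal{L}_1}\big)\otimes\big(\chi(\mathcal{L}_2)e^{-i\tau 2^j\mathcal{L}_2}\big)\,d\tau$ and take the diagonal restriction of its kernel, which becomes a product of two one-variable kernels; (iii) apply the weighted pointwise bound for $\mathcal{K}_{\chi(\mathcal{L})e^{-i\tau 2^j\mathcal{L}}}$ — which follows from finite propagation speed plus the heat-kernel Gaussian bound \eqref{Heat kernel bound for subLaplacians} and Lemma~\ref{Lemma: Decomposition of ball}–type volume estimates — to each factor, picking up $2^{j\beta_1}$ and $2^{j\beta_2}$; (iv) integrate in $\tau$, using rapid decay of $\widehat\psi$ to truncate to $|\tau|\lesssim 2^{j\epsilon}$ and to sum the polynomial-in-$(1+|\tau|2^j)$ losses into the single $2^{j(1/2+\epsilon)}$ coming from the scalar Bochner-Riesz symbol's $L^2_{1/2+\epsilon}$ norm. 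The main obstacle I anticipate is step (iii): obtaining the weighted pointwise kernel bound with the correct, \emph{sharp} power $2^{j\beta_i}$ of the weight (rather than a power of $Q$), since the naive bound from the heat kernel costs $\sim 2^{jQ/2}$; the resolution is to exploit that the wave operator $e^{-i\tau 2^j\mathcal{L}}$ has finite propagation speed so its kernel lives on a ball of radius $\lesssim |\tau|2^j$, on which $(1+\|(y,t)\|)^{\beta_i}\lesssim (1+|\tau|2^j)^{\beta_i}\lesssim 2^{j(\beta_i+\epsilon)}$, and the extra powers are harmless after the $\tau$-integration — so the genuine $1/2$ gain is purely one-dimensional and decoupled from the two weight exponents. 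A secondary technical point is justifying all interchanges of sum, integral, and functional calculus for Schwartz data, which is routine given the rapid decay of the rescaled Laguerre functions and of $\widehat\psi$.
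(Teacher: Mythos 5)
Your reduction to a superposition of one-parameter propagators is close in spirit to what the paper does, but the key step (iii) as you describe it does not work. The operator $e^{-i\tau 2^{j}\mathcal{L}}$ is a Schr\"odinger-type propagator, not a wave propagator: it has no finite propagation speed, and its convolution kernel is not supported in any ball of radius $\lesssim |\tau|2^{j}$ (even for the genuine wave group $\cos(t\sqrt{\mathcal{L}})$, composing with a spectral cutoff $\chi(\mathcal{L})$ destroys compact support of the kernel, leaving only rapid decay off the light cone, which would itself have to be proved). So the mechanism you invoke to absorb the weights $(1+\|(y,t)\|)^{\beta_1}(1+\|(z,s)\|)^{\beta_2}$ at cost $2^{j\beta_1}2^{j\beta_2}$ is unavailable, and a genuine weighted pointwise bound for $\mathcal{K}_{\chi(\mathcal{L})e^{-is\mathcal{L}}}$ with arbitrary polynomial weights would require a separate argument (e.g.\ weighted $L^2$ kernel estimates with $\varrho^{N}$ weights via transmutation to $\cos(t\sqrt{\mathcal{L}})$, which the paper never needs). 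Moreover your bookkeeping does not produce the stated exponent: a loss of $(1+|\tau|2^{j})^{Q/2}$ \emph{per factor} is not ``harmless after the $\tau$-integration''---it would give $2^{jQ}$, which cannot be absorbed into $2^{j(1/2+\epsilon)}$, and since the lemma is asserted for every $\alpha\geq 0$ you cannot rely on the prefactor $2^{-j\alpha}$ to cancel it. Finally, in your rescaled one-variable expansion $\Psi_j^{\alpha}=2^{-j\alpha}\psi(2^{j}(1-\eta_1-\eta_2))$ the function $\widehat{\psi}$ is a fixed, $j$-independent Schwartz function, so the ``$L^2_{1/2+\epsilon}$ norm of the scalar Bochner--Riesz symbol'' you cite as the source of the factor $2^{j(1/2+\epsilon)}$ never actually enters your computation; the $1/2$ has no identified origin in your scheme.

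The paper avoids all of this by damping before taking the Fourier transform: it sets $F(\eta_1,\eta_2)=e^{\eta_1+\eta_2}\Psi_j^{\alpha}(\eta_1,\eta_2)$, uses the two-dimensional Fourier inversion of $F$, and applies the bivariate spectral theorem to write
\begin{align*}
\mathcal{K}_j^{\alpha}((y,t),(z,s))=\frac{1}{4\pi^{2}}\int_{\mathbb{R}^{2}}\widehat{F}(\tau_1,\tau_2)\,\mathcal{K}_{\exp((i\tau_1-1)\mathcal{L})}(y,t)\,\mathcal{K}_{\exp((i\tau_2-1)\mathcal{L})}(z,s)\,d\tau_1\,d\tau_2 ,
\end{align*}
so that each factor is a \emph{complex-time heat kernel}. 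By the Gaussian bound \eqref{Heat kernel bound for subLaplacians} together with Ouhabaz's complex-time estimate, $|\mathcal{K}_{\exp((i\tau-1)\mathcal{L})}(x,u)|\leq C\exp\{-c\|(x,u)\|^{2}/(1+\tau^{2})\}$, and the weights are absorbed at the cost of only $(1+|\tau_1|)^{\beta_1}(1+|\tau_2|)^{\beta_2}$, with no appeal to propagation speed. Cauchy--Schwarz in $(\tau_1,\tau_2)$ then bounds everything by $\|F\|_{L^2_{\beta_1+\beta_2+1+\epsilon}(\mathbb{R}^{2})}\lesssim \|\Psi_j^{\alpha}\|_{L^2_{\beta_1+\beta_2+1+\epsilon}}\lesssim 2^{j(\beta_1+\beta_2+1/2+\epsilon)}$, which is exactly where the $1/2$ comes from (the symbol has $L^2$ norm $\sim 2^{-j/2}$ on its support and loses $2^{j}$ per derivative). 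If you wish to salvage your undamped Schr\"odinger route, you would need to (a) replace the finite-propagation claim by proved weighted kernel bounds for $\chi(\mathcal{L})e^{-is\mathcal{L}}$, and (b) redo the exponent count so that the unweighted contribution is $O(2^{j(1/2+\epsilon)})$ rather than a power of $Q$; as written, the proof has a gap at precisely the point the lemma is about.
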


\begin{proof}

Proof of Lemma 3.1. is a direct consequence of \cite[Theorem 2.7]{Martini_Lie_groups_Polynomial_Growth_2012}. Since $\mathcal{K}_j^{\alpha}$ is the convolution kernel of $\Psi_j^{\alpha}(\mathcal{L}_1, \mathcal{L}_2) = \widetilde{\Psi}_j^{\alpha}(\mathcal{L}_1 + \mathcal{L}_2)$ on $G \times G$, where $\widetilde{\Psi}_j^{\alpha}(s) = (1-s)_{+}^{\alpha} \Psi(2^j (1-s))$ and $\mathcal{L}_1 + \mathcal{L}_2$ is a homogeneous sub-Laplacian on a product of M\'etivier groups $G \times G$, from \cite[Theorem 2.7]{Martini_Lie_groups_Polynomial_Growth_2012} one deduce that, for all $\beta_1, \beta_2 \geq 0$ and $\epsilon>0$,
\begin{align*}
    \left|\mathcal{K}_j^{\alpha}((y,t),(z,s))\right| (1+\|(y,t)\|+\|(z,s)\|)^{\beta_1+\beta_2} &\leq C\, \|\widetilde{\Psi}_j^{\alpha}\|_{L^2_{\beta_1+\beta_2+\epsilon}} \leq C\, 2^{j(\beta_1 +\beta_2 +\epsilon-\alpha-1/2)} ,
\end{align*}
for some constant $C>0$ independent of $j$.
\end{proof}

Now we discuss weighted Plancherel estimate for the sub-Laplacian $\mathcal{L}$, which plays a significant role in our subsequent proofs. Recall that $X_1, \ldots, X_{d_1}, T_1, \ldots, T_{d_2}$ form an orthonormal basis for $\mathfrak{g}$, and the associated left-invariant sub-Laplacians is given by 
\begin{align*}
    \mathcal{L} = -(X_1^2 + \cdots + X_{d_1}^2) .
\end{align*}
The operators $\mathcal{L}, -i T_1, \ldots, -i T_{d_2}$ constitute a system of formally self-adjoint, left-invariant, and pairwise commuting differential operators; hence, they admit a joint functional calculus. Therefore, if we define $T := (-( T_1^2+ \cdots + T_{d_2}^2))^{1/2}$, then the operators $\mathcal{L}$ and $T$ also admit a joint functional calculus.

Let $\Theta : \mathbb{R} \to [0,1]$ be compactly supported smooth function such that it is supported in $[1/2, 2]$ and satisfies
\begin{align}
\label{Definition: Cutoff function theta}
    \sum_{M \in \mathbb{Z}} \Theta_{M}(\tau) =1 ,
\end{align}
where $\Theta_{M}(\tau) = \Theta(2^{M} \tau)$. Also, let $F : \mathbb{R} \to \mathbb{C}$ be a bounded Borel function supported in $ [0, 2]$. Then, for $M \in \mathbb{Z}$, we define $F_M : \mathbb{R} \times \mathbb{R} \to \mathbb{C}$ by 
\begin{align}
\label{Introducing theta in multiplier}
    F_M(\eta, \tau) = F(\eta) \Theta( 2^{M} \tau) .
\end{align}
Consequently, we can decompose $F(\mathcal{L})$ as
\begin{align}
\label{Use of Remark in Niedorf}
  F(\mathcal{L}) = \sum_{M= -\ell_0 }^{\infty} F_{M}(\mathcal{L}, T) ,
\end{align}
where $\ell_0 \in \mathbb{N}$ depends solely on $J_{\lambda}$ and the inner product on $\mathfrak{g}$. The fact that no terms with $M < -\ell_0$ contribute can be proved from an argument from \cite[Remark 5.2]{Niedorf_Restriction_Stratified_group_2025}. Indeed, recall that the functions $\lambda \mapsto b_n^{\lambda}$ are homogeneous of degree $1$, hence, $b_n^{\lambda} = |\lambda| b_n^{\Bar{\lambda}}$ where $\Bar{\lambda} = |\lambda|^{-1} \lambda$. Now as we have $\eta_{\mathbf{k}}^{\lambda} \in \supp{F}$ and $2^{M} |\lambda| \in \supp{\Theta}$, it follows that
\begin{align}
    \label{Inequality: Vanishing of multiplier for l small}
        1 \gtrsim \eta_{\mathbf{k}}^{\lambda} \geq \sum_{n=1}^{\Lambda} r_n b_n^{\lambda} = |\lambda| \sum_{n=1}^{\Lambda} r_n b_n^{\Bar{\lambda}} \sim 2^{-M} \sum_{n=1}^{\Lambda} r_n b_n^{\Bar{\lambda}} .
\end{align}
From \eqref{HilbertSchmidt norm on dual}, we can see that the summand $\sum_{n=1}^{\Lambda} r_n b_n^{\lambda}$ is non-zero for every $\lambda \neq 0$. Also, from Proposition \ref{Prop: Spectral decomposition of Jmu}, the maps $\lambda \mapsto b_n^{\lambda}$ are continuous on $\mathfrak{g}_2^{*}$. As $\Bar{\lambda} \in \{ \lambda \in \mathfrak{g}_2^{*} : |\lambda|=1\}$, from (\ref{Inequality: Vanishing of multiplier for l small}), we obtain $2^{-M} \lesssim 1$. Therefore, there exists $\ell_0 \in \mathbb{N}$ such that \eqref{Use of Remark in Niedorf} holds.

With the same notation introduced above, we now state the following weighted Plancherel estimate.
\begin{proposition}\cite[Proposition 6.1]{Niedorf_Metivier_group_2023}, \cite[Lemma 5.1]{Niedorf_Restriction_Stratified_group_2025}
\label{Proposition: First layer weighted Plancherel}
    Let $F : \mathbb{R} \to \mathbb{C}$ is a bounded Borel function supported in $ [0, 2]$ and $F_M$ be  defined as in \eqref{Introducing theta in multiplier}. Then the convolution kernel $\mathcal{K}_{F_{M}(\mathcal{L}, T)}$ of the  operator $F_{M}(\mathcal{L}, T)$ satisfies the estimate
    \begin{align}
    \label{First layer kernel estimate inside Prop}
        \int_G \left| |x|^{N} \mathcal{K}_{F_{M}(\mathcal{L}, T)}(x,u) \right|^2 \, d(x,u) &\leq C 2^{M(2 N-d_2)} \|F\|_{L^2(\mathbb{R})}^2 ,
    \end{align}
    for all $N \geq 0$.

    Moreover, we also have
    \begin{align}
    \label{Restriction estimate inside prop}
        \| F_{M}(\mathcal{L}, T)f\|_{L^2 } &\leq C 2^{-M d_2/2} \|F\|_{L^2(\mathbb{R})} \, \|f\|_{L^1} .
    \end{align}
\end{proposition}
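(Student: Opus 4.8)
The plan is to deduce Proposition~\ref{Proposition: First layer weighted Plancherel} from the structure formula \eqref{Functional calculus for sub Laplacian} for the functional calculus, combined with the known $L^2$ weighted Plancherel theory for twisted convolutions with rescaled Laguerre functions. Writing the kernel of $F_M(\mathcal{L},T)$ via \eqref{Functional calculus for sub Laplacian} with $F$ replaced by $F_M$, one sees that the central-variable factor $\Theta(2^M\tau)$ localizes $|\lambda|\sim 2^{-M}$, so after taking $\mathcal{F}_2$ in the central variable (which is an $L^2$ isometry up to constants by Plancherel on $\mathfrak{g}_2$), the estimate \eqref{First layer kernel estimate inside Prop} reduces to a fibrewise bound: for $|\lambda|\sim 2^{-M}$,
\begin{align*}
    \int_{\mathfrak{g}_1} \Big| |x|^N \sum_{\mathbf{k}} F(\eta_{\mathbf{k}}^{\lambda})\, \varphi_{\mathbf{k}}^{\mathbf{b}^\lambda,\mathbf{r}}(R_\lambda^{-1} x) \Big|^2 dx \lesssim 2^{2MN}\, \#\{\mathbf{k} : \eta_{\mathbf{k}}^{\lambda}\in\supp F\}^{\text{-weighted}} \|F\|_{L^2}^2,
\end{align*}
integrated in $\lambda$ over $|\lambda|\sim 2^{-M}$, which has measure $\sim 2^{-Md_2}$. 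The extra scale-dependent gains come from two places: the rescaling $\varphi_{\mathbf{k}}^{(b^\lambda_n,r_n)}(R_\lambda^{-1}x)$ carries an explicit $(b^\lambda_n)^{r_n}$-type normalization, and $|x|^N$ acting on a function concentrated at scale $|x|\sim (b^\lambda)^{-1/2}\sim 2^{M/2}$ produces the factor $2^{MN}$. Collecting the powers of $2^M$ — one factor $2^{-Md_2}$ from the $\lambda$-measure, together with the weight-induced $2^{2MN}$ and the normalization factors — yields precisely $2^{M(2N-d_2)}$.

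The concrete mechanism for the fibrewise estimate is the weighted Plancherel theorem for the (rescaled) special Hermite / Laguerre expansion: on $\mathbb{R}^{2m}$ one has, for $G$ supported in a bounded set, $\big\||z|^N \sum_k G(\mu(2k+m)) \varphi_k^{(\mu,m)}\big\|_{L^2}^2 \lesssim \mu^{-?}\,\|G\|_{L^2}^2$ with the exponent of $\mu$ dictated by scaling. This is exactly the content underlying \cite[Proposition 6.1]{Niedorf_Metivier_group_2023}, and the cleanest route is to cite that proposition for the building-block estimate on each eigenspace block $\mathrm{ran}\,P_{n,\lambda}$ and then tensor over $n=1,\dots,\Lambda$ using that the ranges are pairwise orthogonal (Proposition~\ref{Prop: Spectral decomposition of Jmu}(2)) and that $|x|^2 = \sum_n |P_{n,\lambda}x|^2$, so $|x|^{2N}$ is controlled by a sum of products of the block weights. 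The orthogonality of the $R_\lambda$ (it lies in $O(d_1)$) means the change of variables $x\mapsto R_\lambda^{-1}x$ is measure-preserving and commutes with $|x|$, so it does not affect any of the $L^2$ norms or the weight.

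For the second inequality \eqref{Restriction estimate inside prop}, the standard argument is: $F_M(\mathcal{L},T)f = f * \mathcal{K}_{F_M(\mathcal{L},T)}$, so by Young's inequality $\|F_M(\mathcal{L},T)f\|_{L^2} \le \|\mathcal{K}_{F_M(\mathcal{L},T)}\|_{L^2}\|f\|_{L^1}$, and then apply \eqref{First layer kernel estimate inside Prop} with $N=0$ to get $\|\mathcal{K}_{F_M(\mathcal{L},T)}\|_{L^2} \lesssim 2^{-Md_2/2}\|F\|_{L^2}$. This is immediate once the first part is in hand.

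The main obstacle is the bookkeeping in the fibrewise reduction: one must carefully track how the rescalings $\mathbf{b}^\lambda$ interact with the homogeneity (each $b_n^\lambda = |\lambda| b_n^{\bar\lambda}$ with $b_n^{\bar\lambda}$ bounded above and below on $|\lambda|=1$, by Proposition~\ref{Prop: Spectral decomposition of Jmu}(1) and \eqref{HilbertSchmidt norm on dual}), so that the per-fibre constants are uniform in the direction $\bar\lambda$ and depend on $M$ only through the stated power of $2$; and one must verify that the sum over $\mathbf{k}$ does not lose more than $\|F\|_{L^2}^2$ — i.e. that the ``density'' of the Laguerre spectrum $\{\eta_{\mathbf{k}}^\lambda\}$ in a unit interval, weighted by the $L^2$ masses of the $\varphi_{\mathbf{k}}$, is $O(1)$ after the normalization, which is precisely where the $d_2$ (rather than $Q$) enters. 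Since both ingredients are established in the cited works of Niedorf, the proof here is essentially an assembly, and I would present it as such, referring to \cite[Proposition 6.1]{Niedorf_Metivier_group_2023} and \cite[Lemma 5.1]{Niedorf_Restriction_Stratified_group_2025} for the technical core.
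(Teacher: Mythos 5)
The paper does not prove this proposition: it is quoted directly from Niedorf's works, so your plan---cite the weighted kernel bound there for the technical core and deduce \eqref{Restriction estimate inside prop} from the $N=0$ case of \eqref{First layer kernel estimate inside Prop} via Young's inequality $\|f*\mathcal{K}\|_{L^2}\le\|f\|_{L^1}\|\mathcal{K}\|_{L^2}$---matches the paper's treatment exactly. One small caveat in your heuristic bookkeeping: for $|\lambda|\sim 2^{-M}$ and $\eta_{\mathbf{k}}^{\lambda}\lesssim 1$ the Laguerre indices run up to $|\mathbf{k}|\sim 2^{M}$, so the kernel spreads over $|x|\lesssim 2^{M}$ rather than $(b_n^{\lambda})^{-1/2}\sim 2^{M/2}$, and it is this larger scale that produces the factor $2^{2MN}$ in the squared weighted norm; since you defer the fibrewise estimate to the cited results, this slip does not affect the correctness of the assembly.
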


With the help of \eqref{Use of Remark in Niedorf}, the following result can be easily deduced from Proposition \ref{Proposition: First layer weighted Plancherel}.
\begin{proposition}
\label{Proposition: First layer weighted Plancherel for limited alpha}
    If $F: \mathbb{R} \to \mathbb{C}$ is a bounded Borel function supported in $[0,2]$, then for all $0\leq \gamma<d_2/2$,
    \begin{align}
    \label{First layer kernel estimate in second prop}
        \int_G \left| |x|^{\gamma} \mathcal{K}_{F(\mathcal{L})}(x,u) \right|^2 \ d(x,u) &\leq C \|F\|_{L^2(\mathbb{R})}^2 .
    \end{align}
    In addition, we also have
    \begin{align}
    \label{First layer restriction estimate in second prop}
        \| F(\mathcal{L}) f\|_{L^2 } &\leq C \|F\|_{L^2(\mathbb{R})} \, \|f\|_{L^1} .
    \end{align}
\end{proposition}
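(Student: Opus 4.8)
The plan is to deduce both assertions from Proposition~\ref{Proposition: First layer weighted Plancherel} by inserting the dyadic decomposition \eqref{Use of Remark in Niedorf} in the $T$-variable and summing a convergent geometric series; no new analytic input is needed.

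First I would write $F(\mathcal{L}) = \sum_{M=-\ell_0}^{\infty} F_{M}(\mathcal{L},T)$ as in \eqref{Use of Remark in Niedorf}, which is legitimate since $F$ is supported in $[0,2]$ and $\sum_{M\in\mathbb{Z}}\Theta_M=1$. Each $F_M(\mathcal{L},T)$ is a left-invariant operator, so its convolution kernel exists and $\mathcal{K}_{F(\mathcal{L})}=\sum_{M\ge -\ell_0}\mathcal{K}_{F_M(\mathcal{L},T)}$, once we check the series converges in the relevant weighted $L^2$ space. For the first estimate, apply \eqref{First layer kernel estimate inside Prop} with $N=\gamma$ to each summand: taking square roots gives $\big\||x|^{\gamma}\mathcal{K}_{F_M(\mathcal{L},T)}\big\|_{L^2(G)} \le C\,2^{M(\gamma-d_2/2)}\|F\|_{L^2(\mathbb{R})}$. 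By Minkowski's inequality and the geometric series $\sum_{M\ge -\ell_0}2^{M(\gamma-d_2/2)}$, which converges precisely because $\gamma<d_2/2$, we obtain $\big\||x|^{\gamma}\mathcal{K}_{F(\mathcal{L})}\big\|_{L^2(G)}\le C\|F\|_{L^2(\mathbb{R})}$, which is \eqref{First layer kernel estimate in second prop}. This same bound shows the kernel series converges absolutely in $L^2(|x|^{2\gamma}\,d(x,u))$, so the termwise decomposition of $\mathcal{K}_{F(\mathcal{L})}$ is justified.

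For the restriction-type bound, I would repeat the argument with \eqref{Restriction estimate inside prop} in place of \eqref{First layer kernel estimate inside Prop}: for $f\in L^1(G)$ one has $\|F_M(\mathcal{L},T)f\|_{L^2}\le C\,2^{-Md_2/2}\|F\|_{L^2(\mathbb{R})}\|f\|_{L^1}$, and since $d_2\ge 1$ and the index runs over $M\ge -\ell_0$, the series $\sum_{M\ge -\ell_0}2^{-Md_2/2}$ is again a convergent geometric series; summing gives \eqref{First layer restriction estimate in second prop}. The only point requiring a word of care — and it is genuinely minor — is the passage from the operator identity $F(\mathcal{L})=\sum_M F_M(\mathcal{L},T)$ (convergence in the strong operator topology on $L^2(G)$) to the kernel identity $\mathcal{K}_{F(\mathcal{L})}=\sum_M\mathcal{K}_{F_M(\mathcal{L},T)}$: this follows from the standard fact that if the partial sums of the kernels converge in $L^2_{\mathrm{loc}}(G)$ while the associated operators converge strongly, the limit kernel represents the limit operator. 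Thus the entire content of the proposition is that the weight exponent $\gamma<d_2/2$ is exactly the threshold making the geometric series summable; everything else is bookkeeping.
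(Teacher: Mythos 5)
Your argument is exactly the deduction the paper intends: it states that Proposition \ref{Proposition: First layer weighted Plancherel for limited alpha} "can be easily deduced" from Proposition \ref{Proposition: First layer weighted Plancherel} with the help of the decomposition \eqref{Use of Remark in Niedorf}, which is precisely your step of applying \eqref{First layer kernel estimate inside Prop} with $N=\gamma$ (resp. \eqref{Restriction estimate inside prop}) to each dyadic piece and summing the geometric series in $M\ge -\ell_0$, convergent because $\gamma<d_2/2$ (resp. $d_2>0$). Your proof is correct and takes essentially the same route, with the justification of the termwise kernel identity being a harmless extra detail.
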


\section{Proof of Theorem \ref{Bilinear Bochner-Riesz Main theorem}}
\label{Section: Dyadic decomposition of Bochner-Riesz}
We begin by recalling the following decomposition of $\mathcal{B}^{\alpha}$:
\begin{align*}
    \mathcal{B}^{\alpha} = \sum_{j=0}^{\infty} \mathcal{B}_j^{\alpha} ,
\end{align*}
with $\mathcal{B}_j^{\alpha}$ given by the expression  in \eqref{Expression after dyadic decomposition}. Therefore, in order to prove Theorem \ref{Bilinear Bochner-Riesz Main theorem}, it is enough to prove that, there exists some $\delta>0$ (depending on $\alpha$) such that for $f, g \in \mathcal{S}(G)$, the following inequality holds
\begin{align}
\label{Inequality: Estimate of Bj}
    \|\mathcal{B}^{\alpha}_{j}(f,g)\|_{L^p(G)} &\leq C 2^{-j \delta} \|f\|_{L^{p_1}(G)} \|g\|_{L^{p_2}(G)},
\end{align}
where $(p_1, p_2, p)$ satisfies $1/p=1/p_1+1/p_2$ and $1\leq p_1, p_2 \leq \infty$.

Let $\mathcal{K}_j^{\alpha}$ denote the kernel corresponding to the operator $\mathcal{B}_j^{\alpha}$ (see \eqref{Kernel expression}). Then for some fixed $\varepsilon>0$, we split the kernel $\mathcal{K}_j^{\alpha}$ as 
\begin{align}
\label{Decomposition kernel into four parts}
     \mathcal{K}_j^{\alpha} &= \mathcal{K}_{j,1}^{\alpha} + \mathcal{K}_{j,2}^{\alpha} + \mathcal{K}_{j,3}^{\alpha} + \mathcal{K}_{j,4}^{\alpha} , 
\end{align}
where
\begin{align}
\label{Expression: Kernel expression for Kj1}
      \mathcal{K}_{j,1}^{\alpha}((y,t), (z, s)) = \mathcal{K}_{j}^{\alpha}((y,t), (z,s))\  \chi_{B (0, 2^{j(1+\varepsilon)})} (y,t) \  \chi_{B (0, 2^{j(1+\varepsilon)})} (z,s),
\end{align}
\begin{align*}
      \mathcal{K}_{j,2}^{\alpha}((y,t), (z, s)) = \mathcal{K}_{j}^{\alpha}((y,t), (z,s))\  \chi_{B (0, 2^{j(1+\varepsilon)})} (y,t) \  \chi_{B (0, 2^{j(1+\varepsilon)})^c} (z,s),
\end{align*}
\begin{align*}
      \mathcal{K}_{j,3}^{\alpha}((y,t), (z, s)) = \mathcal{K}_{j}^{\alpha}((y,t), (z,s))\  \chi_{B (0, 2^{j(1+\varepsilon)})^c} (y,t) \  \chi_{B (0, 2^{j(1+\varepsilon)})} (z,s),
\end{align*} 
\begin{align*}
      \mathcal{K}_{j,4}^{\alpha}((y,t), (z, s)) = \mathcal{K}_{j}^{\alpha}((y,t), (z,s))\  \chi_{B (0, 2^{j(1+\varepsilon)})^c} (y,t) \  \chi_{B (0, 2^{j(1+\varepsilon)})^c} (z,s) .
\end{align*} 

For each $l=1, 2, 3, 4$, we denote the bilinear operator corresponding to the kernel $\mathcal{K}_{j,l}^{\alpha}$ by $\mathcal{B}^{\alpha}_{j,l}$. Now in order to prove \eqref{Inequality: Estimate of Bj}, again it is enough to prove that, there exists some $\delta>0$ such that for $f, g \in \mathcal{S}(G)$,
\begin{align}
\label{Inequality: Estimate of Bjl}
    \|\mathcal{B}^{\alpha}_{j,l}(f,g)\|_{L^p(G)} &\leq C 2^{-j \delta} \|f\|_{L^{p_1}(G)} \|g\|_{L^{p_2}(G)},
\end{align}
with $1/p=1/p_1+1/p_2$ and $1\leq p_1, p_2 \leq \infty$.

In the sequel, we only demonstrate how to establish \eqref{Inequality: Estimate of Bjl} when $l=1,3,4$. The estimate of $\mathcal{B}^{\alpha}_{j,2}$ is similar to that of $\mathcal{B}^{\alpha}_{j,3}$. Let us first start with the estimate of $\mathcal{B}^{\alpha}_{j,4}$.

\subsection{Estimate of \texorpdfstring{$\mathcal{B}^{\alpha}_{j,4}$}{}}
\label{Subsection: Estimate of Bj4}
Note that Lemma \ref{Lemma: Pointwise kernel estimate for Bj} also holds if we replace $\mathcal{K}_j^{\alpha}$ by $\mathcal{K}^{\alpha}_{j,l}$, for each $l=1,2,3,4$. Hence for any $N>0$ and $\epsilon_1>0$, applying Lemma \ref{Lemma: Pointwise kernel estimate for Bj} we can estimate $\mathcal{B}^{\alpha}_{j,4}$ as
\begin{align*}
    & |\mathcal{B}^{\alpha}_{j,4}(f,g)(x,u)| \\
    & \leq C 2^{j2N} \Big\{ \int_{G} \frac{|f(y,t)| \chi_{B (0, 2^{j(1+\varepsilon)})^c} ((y,t)^{-1}(x,u))} {{\big(1 + \|(y,t)^{-1}(x,u)\| \big)^N}} d(y,t) \Big\} \\
    & \hspace{6cm} \times \Big\{ \int_{G}  \frac{|g(z,s)| \chi_{B (0, 2^{j(1+\varepsilon)})^c}((z,s)^{-1}(x,u))} {{\big(1 + \|(z,s)^{-1}(x,u)\|\big)^N}} d(z,s) \Big\} \\
    & \leq C 2^{j2N} (|f| * k_1)(x,u) (|g| * k_1)(x,u),
\end{align*}
where $k_1(y,t)= \frac{\chi_{B (0, 2^{j(1+\varepsilon)})^c} (y,t)}{{(1 + \|(y,t)\|)^N}}$.

Using Lemma \ref{lemma: Estimate of distance on outside ball}, the $L^1$-norm of $k_1$ can be estimated as
\begin{align}
\label{Estimate of L1 norm of k1 factor}
    \|k_1\|_{L^1} &\leq C 2^{j(1+\varepsilon)(-N+Q)} .
\end{align}

Subsequently, using the above estimate, together with H\"older's inequality and Young's inequality, we obtain
\begin{align*}
    \|\mathcal{B}^{\alpha}_{j,4}(f,g)\|_{L^p} &\leq C 2^{j2N} \|k_1\|_{L^1}^2 \|f\|_{L^{p_1}} \|g\|_{L^{p_2}} \\
    &\leq C 2^{j2N} 2^{2 j(1+\varepsilon)(-N+Q)} \|f\|_{L^{p_1}} \|g\|_{L^{p_2}} \\
    &\leq C 2^{-j \delta} \|f\|_{L^{p_1}} \|g\|_{L^{p_2}} ,
\end{align*}
where $\delta=2\varepsilon N-2Q(1+\varepsilon)$. By choosing $N$ sufficiently large we can make $N \varepsilon>Q(1+\varepsilon)$ so that $\delta>0$.

\subsection{Estimate of \texorpdfstring{$\mathcal{B}^{\alpha}_{j,3}$}{}}
Using Lemma \ref{Lemma: Pointwise kernel estimate for Bj}, similarly to the estimate of $\mathcal{B}^{\alpha}_{j,4}$, it follows that for any $N>0$, 
\begin{align*}
    |\mathcal{B}^{\alpha}_{j,3}(f,g)(x,u)| & \leq C 2^{jN} (|f| * k_1)(x,u) (|g| * k_2)(x,u) ,
\end{align*}
where $k_1$ is as defined in the estimate of $\mathcal{B}^{\alpha}_{j,4}$ and $k_2(z,s)= \chi_{B (0, 2^{j(1+\varepsilon)})}(z,s)$.

We then proceed by applying H\"older's inequality, along with Young's inequality and \eqref{Estimate of L1 norm of k1 factor}, and deduce that 
\begin{align*}
    \|\mathcal{B}^{\alpha}_{j,3}(f,g)\|_{L^p} &\leq C 2^{jN} \|k_1\|_{L^1} \|f\|_{L^{p_1}} \|k_2\|_{L^1} \|g\|_{L^{p_2}} \\
    &\leq C 2^{jN} 2^{j(1+\varepsilon)(-N+Q)} \|f\|_{L^{p_1}} 2^{j(1+\varepsilon)Q} \|g\|_{L^{p_2}} \\
    &\leq C 2^{-j \delta} \|f\|_{L^{p_1}} \|g\|_{L^{p_2}} ,
\end{align*}
where $\delta=\varepsilon N-2Q(1+\varepsilon)$. Again by choosing $N$ sufficiently large we can make $N \varepsilon>2Q(1+\varepsilon)$ such that $\delta>0$.

\subsection{Estimate of \texorpdfstring{$\mathcal{B}^{\alpha}_{j,1}$}{}}
\label{subsection:Estimate of Bj1}
Let $\varepsilon>0$ be the same as the one chosen before the equation \eqref{Decomposition kernel into four parts}. We can choose a sequence $\{(a_n, b_n)\}_{n \in \mathbb{N}}$ such that $\varrho((a_{n_1}, b_{n_1}), (a_{n_2}, b_{n_2})) > \frac{2^{j(1+\varepsilon)}}{10}$ for $n_1 \neq n_2$ and $\sup_{(a, b) \in G} \inf_{n} \varrho((a,b), (a_{n},b_{n})) \leq \frac{2^{j(1+\varepsilon)}}{10}$. With the help of this sequence, we define the following disjoint sets given by
\begin{align}
\label{First decomposition into disjoint balls}
    S_{n}^j = \Bar{B}\left((a_n, b_n), \tfrac{2^{j(1+\varepsilon)}}{10}\right) \setminus \bigcup_{m < n} \Bar{B}\left((a_m, b_m), \tfrac{2^{j(1+\varepsilon)}}{10} \right) .
\end{align}
From \eqref{Expression: Kernel expression for Kj1}, for $(x,u) \in G$ we write
\begin{align*}
    \mathcal{K}^{\alpha}_{j,1}((y,t)^{-1}(x,u), (z,s)^{-1}(x,u)) =: \widetilde{ \mathcal{K}^{\alpha}_{j,1}}((x,u),(y,t),(z,s)) ,
\end{align*}
then we see that
\begin{align*}
    \supp{\widetilde{ \mathcal{K}^{\alpha}_{j,1}}} \subseteq \mathcal{D}_{j} := \{((x,u),(y,t),(z,s)) : \,  \varrho((x,u),(y,t)) \leq 2^{j(1+\varepsilon)}, \varrho((x,u),(z,s)) \leq 2^{j(1+\varepsilon)} \},
\end{align*}
which readily implies
\begin{align*}
    \mathcal{D}_{j} \subseteq \bigcup_{\substack{n,n_1,n_2: \varrho((a_n,b_n), (a_{n_1},b_{n_1}))\leq 2\cdot 2^{j(1+\varepsilon)}, \\ \varrho((a_n,b_n), (a_{n_2},b_{n_2}))\leq 2\cdot 2^{j(1+\varepsilon)}}} S_{n}^j \times (S_{n_1}^j \times S_{n_2}^j) .
\end{align*}
As a result, we can decompose $\mathcal{B}^{\alpha}_{j,1}$ as 
\begin{align}
\label{Equality: First decom of f and g}
    \mathcal{B}^{\alpha}_{j,1}(f,g)(x,u) &= \sum_{n=0}^{\infty} \sum_{\substack{n_1: \varrho((a_n,b_n) , (a_{n_1},b_{n_1}))\leq 2 \cdot 2^{j(1+\varepsilon)} \\ n_2: \varrho((a_n, b_n) , (a_{n_2},b_{n_2}))\leq 2 \cdot 2^{j(1+\varepsilon)}}} \chi_{S_{n}^j}(x,u) \mathcal{B}^{\alpha}_{j,1} (f_{n_1}^j ,g_{n_2}^j)(x,u) ,
\end{align}
where $f_{n_1}^j = f \chi_{S_{n_1}^j}$ and $g_{n_2}^j = g \chi_{S_{n_2}^j}$.

Now, we make the following claim for the operator $\mathcal{B}^{\alpha}_{j,1}$. For 
\begin{align*}
    \varrho((a_n,b_n) , (a_{n_1},b_{n_1}))\leq 2 \cdot 2^{j(1+\varepsilon)} \quad \text{and} \quad \varrho((a_n,b_n) , (a_{n_2},b_{n_2}))\leq 2 \cdot 2^{j(1+\varepsilon)} ,
\end{align*}
whenever $\alpha>\alpha(p_1, p_2)$, there exists $\delta>0$ such that
\begin{align}
\label{Claim for boundedness of Bj1}
     \| \chi_{S_{n}^j} \mathcal{B}^{\alpha}_{j,1} (f_{n_1}^j ,g_{n_2}^j)\|_{L^{p}(G)} \leq C 2^{-j \delta} \|f_{n_1}^j\|_{L^{p_1}(G)} \|g_{n_2}^j\|_{L^{p_2}(G)} ,
\end{align}
where $(p_1, p_2, p)$ satisfies $1/p=1/p_1+1/p_2$ and $1\leq p_1, p_2 \leq \infty$.

In this subsection, we complete the proof of the estimate of \eqref{Inequality: Estimate of Bjl} for $\mathcal{B}^{\alpha}_{j,1}$, under the assumption that claim \eqref{Claim for boundedness of Bj1} holds. Observe that for $n \neq m$, the balls $B((a_{n}, b_{n}), \frac{2^{j(1+\varepsilon)}}{20})$ and $ B((a_{m}, b_{m}), \frac{2^{j(1+\varepsilon)}}{20})$ are disjoint. Therefore, we have the following bounded overlapping property,
\begin{align}
\label{Bounded overlapping property for j ball}
   \sup_{n} \#\{m : \varrho((a_{n}, b_{n}), (a_{m}, b_{m})) \leq  2 \cdot 2^{j(1+\varepsilon)}\} \leq C .
\end{align}
Since the sets $S_n^j$ are disjoint and applying bounded overlapping property, it follows from (\ref{Equality: First decom of f and g}) that
\begin{align*}
    &\|\mathcal{B}^{\alpha}_{j,1} (f,g)\|_{L^{p}(G)}^p = \Big\|\sum_{n=0}^{\infty} \sum_{\substack{n_1: \varrho((a_n,b_n) , (a_{n_1},b_{n_1}))\leq 2 \cdot 2^{j(1+\varepsilon)} \\ n_2: \varrho((a_n, b_n) , (a_{n_2},b_{n_2}))\leq 2 \cdot 2^{j(1+\varepsilon)}}} \chi_{S_{n}^j} \mathcal{B}^{\alpha}_{j,1} (f_{n_1}^j ,g_{n_2}^j) \Big\|_{L^{p}(G)}^p \\
    &\nonumber \leq C \sum_{n=0}^{\infty} \sum_{\substack{n_1: \varrho((a_n,b_n) , (a_{n_1},b_{n_1}))\leq 2 \cdot 2^{j(1+\varepsilon)} \\ n_2: \varrho((a_n, b_n) , (a_{n_2},b_{n_2}))\leq 2 \cdot 2^{j(1+\varepsilon)}}} \| \chi_{S_{n}^j} \mathcal{B}^{\alpha}_{j,1} (f_{n_1}^j ,g_{n_2}^j) \|_{L^{p}(G)}^{p} .
\end{align*}
Consequently, invoking the claim \eqref{Claim for boundedness of Bj1}, the above expression can be dominated by
\begin{align*}
    C 2^{-j p \delta } \sum_{n=0}^{\infty} \Big\{ \sum_{n_1: \varrho((a_n,b_n) , (a_{n_1},b_{n_1}))\leq 2 \cdot 2^{j(1+\varepsilon)}} \|f_{n_1}^j\|_{L^{p_1}(G)}^{p} \Big\} \Big\{ \sum_{n_2: \varrho((a_n, b_n) , (a_{n_2},b_{n_2}))\leq 2 \cdot 2^{j(1+\varepsilon)}} \|g_{n_2}^j\|_{L^{p_2}(G)}^{p} \Big\} .
\end{align*}
Since $1=p/p_1+p/p_2$, applying H\"older's inequality with respect to the sums over $n_1,n_2$ and $n$ respectively and again using bounded overlapping property, the above expression can be controlled by
\begin{align*}
    & C 2^{-j p \delta } \Big\{\sum_{n=0}^{\infty} \sum_{n_1: \varrho((a_n,b_n) , (a_{n_1},b_{n_1}))\leq 2 \cdot 2^{j(1+\varepsilon)}} \|f_{n_1}^j\|_{L^{p_1}(G)}^{p_1} \Big\}^{\frac{p}{p_1}} \\
    &\nonumber \hspace{6cm} \Big\{\sum_{n=0}^{\infty} \sum_{n_2: \varrho((a_n, b_n) , (a_{n_2},b_{n_2}))\leq 2 \cdot 2^{j(1+\varepsilon)}} \|g_{n_2}^j\|_{L^{p_2}(G)}^{p_2} \Big\}^{\frac{p}{p_2}} \\
    &\nonumber \leq C 2^{-j p \delta} \|f\|_{L^{p_1}(G)}^p \|g\|_{L^{p_2}(G)}^p .
\end{align*}
This completes the proof of the inequality \eqref{Inequality: Estimate of Bjl} for $\mathcal{B}^{\alpha}_{j,1}$, upon assuming the claim.

It remains to prove the claim stated in \eqref{Claim for boundedness of Bj1}. First we note that via an argument based on bilinear interpolation using real method \cite{Grafakos_Liu_Lu_Zhao_Marcinkiewicz_interpolation_2012},  explained in detail in \cite[Section 4.3]{Bernicot_Grafakos_Song_Yan_Bilinear_Bochner_Riesz_2015}, it is enough to verify the claim for $(p_1, p_2,p)=(2,2,1)$, $(1,1,2)$, $(1,2,2/3)$, $(2,1,2/3)$, $(1,\infty,1)$, $(\infty,1,1)$,  $(2,\infty,2)$, $(\infty,2,2)$ and $(\infty, \infty, \infty)$. Furthermore, by inter changing the role of input functions $f$ and $g$, we may exclude the cases when $(p_1, p_2,p)=$ $(2,1,2/3)$, $(\infty,1,1)$,  and $(\infty,2,2)$.


For each $n \in \mathbb{N}$, let us denote $S_{n,0}^j := (a_n, b_n)^{-1} S_n^j$, $S_{n_1,0}^j := (a_n, b_n)^{-1} S_{n_1}^j$ and $S_{n_2,0}^j := (a_n, b_n)^{-1} S_{n_2}^j$. Then we can easily see that
\begin{align*}
    \| \chi_{S_{n}^j} \mathcal{B}^{\alpha}_{j,1} (f_{n_1}^j ,g_{n_2}^j) \|_{L^{p}(G)} &= \| \chi_{S_{n,0}^j} \mathcal{B}^{\alpha}_{j,1} (f_{n_1,0}^j ,g_{n_2,0}^j) \|_{L^{p}(G)} ,
\end{align*}
where $f_{n_1,0}^j = f \chi_{S_{n_1,0}^j}$ and $g_{n_2,0}^j = g \chi_{S_{n_2,0}^j}$.

In view of left-invariance of $\varrho$, the claim \eqref{Claim for boundedness of Bj1} is further reduced to showing that: for 
\begin{align}
\label{Condition for all claims}
    \varrho((a_{n_1},b_{n_1}) , 0)\leq 2 \cdot 2^{j(1+\varepsilon)} \quad \text{and} \quad \varrho((a_{n_2},b_{n_2}) , 0)\leq 2 \cdot 2^{j(1+\varepsilon)} ,
\end{align}
whenever $\alpha>\alpha(p_1, p_2)$, there exists a $\delta>0$ such that
\begin{align}
\label{Reduced claim for Bj1}
     \| \chi_{S_{n,0}^j} \mathcal{B}^{\alpha}_{j,1} (f_{n_1,0}^j ,g_{n_2,0}^j) \|_{L^{p}(G)} \leq C 2^{-j \delta} \|f_{n_1,0}^j\|_{L^{p_1}(G)} \|g_{n_2,0}^j\|_{L^{p_2}(G)} ,
\end{align}
for $(p_1,p_2,p) \in \{(1,1,1/2),(1,2,2/3),(2,2,1),(1,\infty,1),(2,\infty,2),(\infty, \infty, \infty) \}$.

Note that $S_{n,0}^j \subseteq B(0, 2^{j(1+\varepsilon)}/10)$. From \eqref{Condition for all claims} we can easily see that
\begin{align*}
    S_{n_1,0}^j \subseteq B(0, 3 \cdot 2^{j(1+\varepsilon)}) \quad \text{and} \quad S_{n_2,0}^j \subseteq B(0, 3 \cdot 2^{j(1+\varepsilon)}) .
\end{align*}
Hence all the sets $S_{n,0}^j$, $S_{n_1,0}^j$ and $S_{n_2,0}^j$ are contained in a ball $B_j=B(0, 3 \cdot 2^{j(1+\varepsilon)})$ centered at origin and of radius $2^{j(1+\varepsilon)}$. Therefore in order to prove \eqref{Reduced claim for Bj1}, it is enough to prove the following:
\begin{align*}
    \|\chi_{B_j} \mathcal{B}^{\alpha}_{j,1}(\mathfrak{F}_j, \mathfrak{G}_j)\|_{L^p(G)} &\leq C 2^{-j \delta} \|\mathfrak{F}_j\|_{L^{p_1}(G)} \|\mathfrak{G}_j\|_{L^{p_2}(G)} \quad \text{whenever} \ \supp{\mathfrak{F}_j}, \  \supp{\mathfrak{G}_j} \subseteq B_j .
\end{align*}
Let us write
\begin{align*}
    \chi_{B_j}(x,u) \mathcal{B}^{\alpha}_{j,1}(\mathfrak{F}_j, \mathfrak{G}_j)(x,u) &= \chi_{B_j}(x,u) \mathcal{B}^{\alpha}_{j}(\mathfrak{F}_j, \mathfrak{G}_j)(x,u) + \chi_{B_j}(x,u) (\mathcal{B}^{\alpha}_{j,1}-\mathcal{B}^{\alpha}_{j})(\mathfrak{F}_j, \mathfrak{G}_j)(x,u) .
\end{align*}
Now note that
\begin{align*}
    & \chi_{B_j}(x,u) (\mathcal{B}^{\alpha}_{j,1}-\mathcal{B}^{\alpha}_{j})(\mathfrak{F}_j, \mathfrak{G}_j)(x,u) \\
    &= \chi_{B_j}(x,u) \int_G \int_G (\mathcal{K}^{\alpha}_{j,1}-\mathcal{K}^{\alpha}_{j})((y,t)^{-1}(x,u), (z,s)^{-1}(x,u)) \mathfrak{F}_j(y,t) \mathfrak{G}_j(z,s) \, d(y, t) \, d(z, s) \\
    &= \chi_{B_j}(x,u) \int_G \int_G \widetilde{K}_{j}((y,t)^{-1}(x,u), (z,s)^{-1}(x,u)) \mathfrak{F}_j(y,t) \mathfrak{G}_j(z,s) \, d(y, t) \, d(z, s),
\end{align*}
where
\begin{align*}
    \supp{\widetilde{K}_{j}}((y,t),(z,s)) &\subseteq B(0, 6 \cdot 2^{j(1+\varepsilon)}) \times (B(0, 6 \cdot 2^{j(1+\varepsilon)}) \setminus B(0, 2^{j(1+\varepsilon)})) \\
    &\hspace{2cm} \cup (B(0, 6 \cdot 2^{j(1+\varepsilon)}) \setminus B(0, 2^{j(1+\varepsilon)})) \times B(0, 6 \cdot 2^{j(1+\varepsilon)}) .
\end{align*}
Therefore similarly as in the estimate of $\mathcal{B}^{\alpha}_{j,2}$ and $\mathcal{B}^{\alpha}_{j,3}$ we get
\begin{align*}
    \|\chi_{B_j} (\mathcal{B}^{\alpha}_{j,1}-\mathcal{B}^{\alpha}_{j})(\mathfrak{F}_j, \mathfrak{G}_j)\|_{L^{p}(G)} &\leq C 2^{-j \delta} \|\mathfrak{F}_j\|_{L^{p_1}(G)} \|\mathfrak{G}_j\|_{L^{p_2}(G)} .
\end{align*}
Hence it suffices prove the following estimate:
\begin{align}
\label{Reduction to the same support ball}
    \|\chi_{B_j} \mathcal{B}^{\alpha}_{j}(\mathfrak{F}_j, \mathfrak{G}_j)\|_{L^p(G)} &\leq C 2^{-j \delta} \|\mathfrak{F}_j\|_{L^{p_1}(G)} \|\mathfrak{G}_j\|_{L^{p_2}(G)} \quad \text{whenever} \ \supp{\mathfrak{F}_j}, \  \supp{\mathfrak{G}_j} \subseteq B_j ,
\end{align}

\medskip
Now over the next several sections, our goal is to establish the claim stated in (\ref{Reduction to the same support ball}) for the points $(p_1,p_2,p)= (2,2,1)$ and $(1, \infty, 1)$. And for the remaining points $(p_1,p_2,p)=(1,1,1/2)$, $(1,2,2/3)$, $(2,2,1)$ and $(\infty, \infty, \infty)$ instead of proving the claim \eqref{Reduction to the same support ball}, we prove the estimate \eqref{Inequality: Estimate of Bj}.

\section{Proof of the claim (\ref{Inequality: Estimate of Bj}) at \texorpdfstring{$(p_1,p_2,p)=(1,1,1/2)$}{}{}}
\label{Section: Claim A}

This section is devoted to proving the claim \eqref{Inequality: Estimate of Bj} for the point $(p_1,p_2,p)=(1,1,1/2)$. Note that $\alpha(1,1)= d+1$. In the Euclidean setting, the kernel expression of the bilinear Bochner-Riesz means $B^{\alpha}_R$ is explicitly known and can be explicitly expressed in terms of Bessel functions. This fact has been exploited in the work of Bernicot et al. (see \cite[Proposition 4.2 (i)]{Bernicot_Grafakos_Song_Yan_Bilinear_Bochner_Riesz_2015}) to get the boundedness of $B^{\alpha}_R$ for $\alpha>n-1/2$, where $n$ is the Euclidean dimension. In contrast to  the case of M\'etivier groups, an explicit kernel representation of the bilinear Bochner-Riesz operator associated with the sub-Laplacian is not known. As a result, establishing the estimate of $\mathcal{B}^{\alpha}_{j,1}$ for $\alpha>d+1$ at the point $(1,1,1/2)$ becomes more delicate. In order to get the required estimate, here we draw upon some ideas from \cite{Niedorf_Metivier_group_2023}, where the author studied the $p$-specific Bochner-Riesz multiplier. However, in bilinear set-up the proofs are more technical and require additional adaptations.

From \eqref{Expression after dyadic decomposition}, we have
\begin{align*}
    & \mathcal{B}_{j}^{\alpha}(f,g)(x,u) = \frac{1}{(2\pi)^{2 d_2}} \int_{\mathfrak{g}_{2,r}^{*}} \int_{\mathfrak{g}_{2,r}^{*}} e^{i \langle \lambda_1 + \lambda_2 , u \rangle} \sum_{\mathbf{k}_1, \mathbf{k}_2 \in \mathbb{N}^N} \Psi_j^{\alpha}(\eta_{\mathbf{k}_1}^{\lambda_1}, \eta_{\mathbf{k}_2}^{\lambda_2}) \\
    & \hspace{2cm} \times \left[f^{\lambda_1} \times_{\lambda_1} \varphi_{\mathbf{k}_1}^{\mathbf{b}^{\lambda_1}, \mathbf{r}}(R_{\lambda_1}^{-1}\cdot) \right](x) \left[g^{\lambda_2} \times_{\lambda_2} \varphi_{\mathbf{k}_2}^{\mathbf{b}^{\lambda_2}, \mathbf{r}}(R_{\lambda_2}^{-1}\cdot) \right](x) \  d\lambda_1 \  d\lambda_2 .
\end{align*}
We fix $\eta_1= \eta_{\mathbf{k}_1}^{\lambda_1}$, and view $\Psi_j^{\alpha}(\eta_{1}, \cdot)$ as a function of the second variable, supported on $[0,1]$ and vanishing at $1$. Thus, for any fix $\eta_1$, the function $\eta_2 \to \Psi_j^{\alpha}(\eta_{1}, \eta_2)$ is a smooth function on $\mathbb{R}$ supported in $(-\infty, 1)$. Let $\Tilde{\chi} \in C_c^{\infty}(\mathbb{R})$ such that it equals to $1$ on $[-1,1]$ and $0$ outside  $[-2,2]$. Then the function $\eta_2 \to \Tilde{\chi}(\eta_2) \Psi_j^{\alpha}(\eta_{1}, \eta_2)$ become a smooth function on $\mathbb{R}$, supported on $(-2,2)$ and coincides with the function $\eta_2 \to \Psi_j^{\alpha}(\eta_{1}, \eta_2)$ on $[0, \infty)$. Subsequently, we extend the function $\eta_2 \to \Tilde{\chi}(\eta_2) \Psi_j^{\alpha}(\eta_{1}, \eta_2)$ periodically to $\mathbb{R}$ as a $4$-periodic function. Hence, we can expand it into a Fourier series as
\begin{align}
\label{Fourier series decomposition with chi tilde}
    \Tilde{\chi}(\eta_2) \Psi_j^{\alpha}(\eta_1, \eta_{2}) &= \sum_{l \in \mathbb{Z}} \phi_{j,l}^{\alpha}(\eta_1) e^{i \pi l \eta_{2}/2} ,
\end{align}
where $\phi_{j,l}^{\alpha}$ for $l \in \mathbb{Z}$, is given by $\displaystyle{\phi_{j,l}^{\alpha}(\eta_1)= \frac{1}{4} \int_{-2}^2 \Tilde{\chi}(\eta_2) \Psi_j^{\alpha}(\eta_1, \eta_{2}) e^{-i \pi l \eta_{2}/2}\, d\eta_{2}}$. It then follows that $\phi_{j,l}^{\alpha}$ satisfies the following estimate:
\begin{align}
\label{Convergence of sum of l using phi}
\sup_{\eta_1 \in [0,1]} |\phi_{j,l}^{\alpha}(\eta_1)| (1 + |l|)^{1 + \beta} \leq C 2^{-j \alpha} 2^{j \beta} \quad \text{for all} \quad \beta \geq 0 .
\end{align}
Since $\Tilde{\chi}(\eta_2) \Psi_j^{\alpha}(\eta_{1}, \eta_2)$ coincides with $\Psi_j^{\alpha}(\eta_{1}, \eta_2)$ on $[0, \infty)$ as a function of $\eta_2$, from \eqref{Fourier series decomposition with chi tilde} for $\eta_1, \eta_2 \in [0, \infty)$ we can write
\begin{align}
\label{Fourier series decomposition}
    \Psi_j^{\alpha}(\eta_1, \eta_{2}) &= \sum_{l \in \mathbb{Z}} \phi_{j,l}^{\alpha}(\eta_1) e^{i \pi l \eta_{2}/2} \Tilde{\chi}(\eta_2) .
\end{align}
The above expansion \eqref{Fourier series decomposition} of $\Psi_j^{\alpha}$, therefore leads us to the following representation of  $\mathcal{B}^{\alpha}_{j,1}$.
\begin{align}
\label{Equation: Decomposition in terms of Fourier series}
    \mathcal{B}_{j}^{\alpha}(f ,g)(x,u) &= C \sum_{l \in \mathbb{Z}} \Big\{\int_{\mathfrak{g}_{2,r}^{*}} e^{i \langle \lambda_1, u \rangle} \sum_{\mathbf{k}_1 \in \mathbb{N}^N} \phi_{j,l}^{\alpha}(\eta_{\mathbf{k}_1}^{\lambda_1}) \Big[f^{\lambda_1} \times_{\lambda_1} \varphi_{\mathbf{k}_1}^{\mathbf{b}^{\lambda_1}, \mathbf{r}}(R_{\lambda_1}^{-1}\cdot) \Big](x) \ d\lambda_1 \Big\} \\
    &\nonumber \hspace{1cm} \Big\{\int_{\mathfrak{g}_{2,r}^{*}} e^{i \langle \lambda_2, u \rangle} \sum_{\mathbf{k}_2 \in \mathbb{N}^N} e^{i \pi l \eta_{\mathbf{k}_2}^{\lambda_2}/2} \Tilde{\chi}(\eta_{\mathbf{k}_2}^{\lambda_2}) \Big[g^{\lambda_2} \times_{\lambda_2} \varphi_{\mathbf{k}_2}^{\mathbf{b}^{\lambda_2}, \mathbf{r}}(R_{\lambda_2}^{-1}\cdot) \Big](x) \ d\lambda_2 \Big\} \\
    &\nonumber= C \sum_{l \in \mathbb{Z}} \left\{ \phi_{j,l}^{\alpha}(\mathcal{L})f(x,u) \right\} \left\{ \psi_l(\mathcal{L})g(x,u) \right\} ,
\end{align}
where $\psi_l(\eta_2) := e^{i \pi l \eta_2/2} \Tilde{\chi}(\eta_2)$.

Note that for $0<p<1$, $\|\cdot\|_{L^p}^p$ satisfies the following estimate,
\begin{align}
\label{Triangle inequality for p less than 1 case}
    \|f+g\|_{L^p}^p \leq \|f\|_{L^p}^p + \|g\|_{L^p}^p .
\end{align}
Using this fact along with H\"older's inequality yields
    \begin{align}
    \label{Application of triangle for p less tha 1 case}
        \|\mathcal{B}_j^{\alpha}(f,g)\|_{L^{1/2}} &\leq C \left(\sum_{l \in \mathbb{Z}} \|\phi_{j,l}^{\alpha}(\mathcal{L})f\|_{L^1}^{1/2} \|\psi_l(\mathcal{L})g\|_{L^1}^{1/2} \right)^{2} .
    \end{align}
    Let $\mathcal{K}_{\phi_{j,l}^{\alpha}(\mathcal{L})}$ and $\mathcal{K}_{\psi_l(\mathcal{L})}$ denote the convolution kernel of $\phi_{j,l}^{\alpha}(\mathcal{L})$ and $\psi_l(\mathcal{L})$ respectively. Then from \eqref{Application of triangle for p less tha 1 case} applying Young's inequality we obtain
    \begin{align*}
        \|\mathcal{B}_j^{\alpha}(f,g)\|_{L^{1/2}} &\leq C \left(\sum_{l \in \mathbb{Z}} \|\mathcal{K}_{\phi_{j,l}^{\alpha}(\mathcal{L})}\|_{L^1}^{1/2} \|\mathcal{K}_{\psi_l(\mathcal{L})}\|_{L^1}^{1/2} \right)^{2} \|f\|_{L^1} \|g\|_{L^1} .
    \end{align*}
    Since $\mathcal{L}$ is a sub-Laplacian on M\'etivier groups $G$, from \cite[Theorem 3.11, Proposition 3.9]{Martini_Lie_groups_Polynomial_Growth_2012} for any $s>\dim G/2=d/2$ we get
    \begin{align*}
        \|\mathcal{K}_{\phi_{j,l}^{\alpha}(\mathcal{L})}\|_{L^1} \lesssim \|\phi_{j,l}^{\alpha}\|_{L_s^2(\mathbb{R})} \quad \text{and} \quad \|\mathcal{K}_{\psi_l(\mathcal{L})}\|_{L^1} \lesssim \|\psi_l\|_{L_s^2(\mathbb{R})} .
    \end{align*}
    Note that similar to (6.2), we also have
    \begin{align*}
        \|\phi_{j,l}^{\alpha}\|_{L_s^2(\mathbb{R})} \lesssim (1+|l|)^{-1-\beta} 2^{j(\beta+s-\alpha)} \quad \text{and} \quad \|\psi_l\|_{L_s^2(\mathbb{R})} \lesssim (1+|l|)^s ,
    \end{align*}
    for any $\beta>0$.

    Hence combining all the above estimates we obtain
    \begin{align*}
        \|\mathcal{B}_j^{\alpha}(f,g)\|_{L^{1/2}} &\leq C 2^{j(\beta+s-\alpha)} \left(\sum_{l \in \mathbb{Z}} (1+|l|)^{(s-1-\beta)/2} \right)^{2} \|f\|_{L^1} \|g\|_{L^1} .
    \end{align*}
    Note that if $\beta>1+s$, then the above series in $l$ will converge. Since $\alpha>d+1$ and $\beta>1+s$, so that $\beta+s>1+2s$ and as $s>d/2$, we can choose $\delta= \alpha-\beta-s>0$ such that
    \begin{align*}
        \|\mathcal{B}_j^{\alpha}(f,g)\|_{L^{1/2}} &\lesssim 2^{-j \delta} \|f\|_{L^1} \|g\|_{L^1} .
    \end{align*}
    This completes the proof of the claim \eqref{Inequality: Estimate of Bj} for the point $(p_1,p_2,p)=(1,1,1/2)$. \qed

\section{Proof of the claim (\ref{Inequality: Estimate of Bj}) at \texorpdfstring{$(p_1,p_2,p)=(1,2,2/3)$}{}{}} 
\label{Proof at (1,2,2/3) for claim A}

In this case $\alpha(1,2)=(d+1)/2$. The idea of the proof is similar to the argument used  in the estimate for $(p_1, p_2,p)=(1,1,1/2)$. Similarly as in \eqref{Application of triangle for p less tha 1 case}, here also we get
    \begin{align*}
        \|\mathcal{B}_j^{\alpha}(f,g)\|_{L^{2/3}} &\leq C \left(\sum_{l \in \mathbb{Z}} \|\phi_{j,l}^{\alpha}(\mathcal{L})f\|_{L^1}^{2/3} \|\psi_l(\mathcal{L})g\|_{L^2}^{2/3} \right)^{3/2} .
    \end{align*}
    Now using Young's convolution inequality we obtain
    \begin{align}
    \label{Application of Young for 1 2 case}
        \|\mathcal{B}_j^{\alpha}(f,g)\|_{L^{2/3}} &\leq C \left(\sum_{l \in \mathbb{Z}} \|\mathcal{K}_{\phi_{j,l}^{\alpha}(\mathcal{L})}\|_{L^1}^{2/3} \|\psi_l\|_{L^{\infty}}^{2/3} \right)^{3/2} \|f\|_{L^1} \|g\|_{L^2} .
    \end{align}
    Again similar to the case $(1,1,1/2)$, since $\mathcal{L}$ is a sub-Laplacian on M\'etivier groups $G$, from \cite[Theorem 3.11, Proposition 3.9]{Martini_Lie_groups_Polynomial_Growth_2012} for any $s>\dim G/2=d/2$ we get
    \begin{align}
    \label{Similar estimate of previous for phijl}
        \|\mathcal{K}_{\phi_{j,l}^{\alpha}(\mathcal{L})}\|_{L^1} \lesssim \|\phi_{j,l}^{\alpha}\|_{L_s^2(\mathbb{R})} \lesssim (1+|l|)^{-1-\beta} 2^{j(\beta+s-\alpha)} ,
    \end{align}
    for any $\beta>0$.

    Hence putting the estimate \eqref{Similar estimate of previous for phijl} into \eqref{Application of Young for 1 2 case} we have
    \begin{align*}
        \|\mathcal{B}_j^{\alpha}(f,g)\|_{L^{2/3}} &\leq C 2^{j(\beta+s-\alpha)} \left(\sum_{l \in \mathbb{Z}} (1+|l|)^{-(1+\beta)2/3} \right)^{3/2} \|f\|_{L^1} \|g\|_{L^2} .
    \end{align*}
    Now the above series in $l$ will converge provided $\beta>1/2$. Therefore we obtain
    \begin{align*}
        \|\mathcal{B}_j^{\alpha}(f,g)\|_{L^{2/3}} &\leq C 2^{-j \delta} \|f\|_{L^1} \|g\|_{L^2} ,
    \end{align*}
    where we choose $\alpha>(d+1)/2$, $s>d/2$ and $\beta>1/2$ such that $\delta=\alpha-\beta-s>0$.

This completes the proof of the claim \eqref{Inequality: Estimate of Bj} for the point $(p_1,p_2,p)=(1,2,2/3)$. \qed

\section{Proof of the claim (\ref{Inequality: Estimate of Bj}) at \texorpdfstring{$(p_1,p_2,p)=(2,2,1)$}{}{}}

Note that here we have $\alpha(2, 2)=0$. Using the decomposition from \eqref{Equation: Decomposition in terms of Fourier series} and applying Cauchy-Schwartz inequality, the fact from (\ref{Convergence of sum of l using phi}) we obtain
\begin{align*}
   \|\mathcal{B}^{\alpha}_{j}(f, g)\|_{L^1} &\leq C \sum_{l \in \mathbb{Z}} \| \phi_{j,l}^{\alpha}(\mathcal{L})f\|_{L^2} \| \psi_l(\mathcal{L})g \|_{L^2} \\
   &\leq C \sum_{l \in \mathbb{Z}} \frac{1}{(1+|l|)^{(1+\varepsilon)}} \{(1 + |l|)^{1 + \varepsilon} \|\phi_{j,l}^{\alpha}\|_{L^{\infty}}\} \|f\|_{L^{2}} \|g\|_{L^{2}} \\
   &\leq C 2^{-j\alpha} 2^{j \varepsilon} \|f\|_{L^{2}} \|g\|_{L^{2}} \\
   &\leq C 2^{-j \delta} \|f\|_{L^{2}} \|g\|_{L^{2}} .
\end{align*}
where $\delta=\alpha-\varepsilon>0$, since $\alpha > 0$ we can choose $\varepsilon>0$ so small such that $\alpha - \varepsilon>0$.

This completes the proof of the claim \eqref{Inequality: Estimate of Bj} for the point $(p_1,p_2,p)=(2,2,1)$. \qed

\section{Proof of the claim \ref{Inequality: Estimate of Bj} at \texorpdfstring{$(p_1,p_2,p)=(\infty,\infty,\infty)$}{}{}}
\label{section; Proof of claim C at infinity}

It is worth noting that in the Euclidean context, similar to the case $(1,1,1/2)$, the boundedness of bilinear Bochner-Riesz means $B^{\alpha}_R$ at $(\infty, \infty, \infty)$ is a consequence of the explicit expression of corresponding bilinear kernel of $B^{\alpha}_R$ and of H\"older's inequality. However, the present case requires a different approach. Note that $\alpha(\infty, \infty)=d-1/2$.


Let $\mathcal{K}_j^{\alpha}$ denote the kernel corresponding to the operator $\mathcal{B}_j^{\alpha}$, that is if we set $F=(f,g)$ then we have
    \begin{align*}
        \mathcal{B}_j^{\alpha}(f,g)(x,u) &= \mathcal{B}_j^{\alpha}F(x,u) \\
        &= (F * \mathcal{K}_j^{\alpha}) ((x_1,u_1), (x_2,u_2))|_{(x_1,u_1)=(x_2,u_2)=(x,u)} .
    \end{align*}
    Therefore $\mathcal{K}_j^{\alpha}$ is the convolution kernel of $\Psi_j^{\alpha}(\mathcal{L}_1, \mathcal{L}_2) = \widetilde{\Psi}_j^{\alpha}(\mathcal{L}_1 + \mathcal{L}_2)$ on $G \times G$, where $\widetilde{\Psi}_j^{\alpha}(s) = (1-s)_{+}^{\alpha} \Psi(2^j (1-s))$ with $\Psi \in C_c^{\infty}(1/2,2)$.

    Since $\mathcal{L}_1 + \mathcal{L}_2$ is a homogeneous sub-Laplacian on a product of M\'etivier groups $G \times G$, from \cite[Theorem 3.2]{Hebisch_Product_Generalized_Heisenberg_1996} and a standard argument (see \cite[Theorem 3.11, Proposition 3.9]{Martini_Lie_groups_Polynomial_Growth_2012}) for any $s>\dim(G \times G)/2=d$ we have
    \begin{align}
    \label{Application of hebisch result}
        \|\mathcal{K}_j^{\alpha}\|_{L^1(G \times G)} \lesssim \|\widetilde{\Psi}_j^{\alpha}\|_{L_s^2(\mathbb{R})} \lesssim 2^{j(s-\alpha-1/2)} .
    \end{align}
    
   On the other hand for $1\leq p_1, p_2, p\leq \infty$, similarly as in \cite[Lemma 2.1]{Bernicot_Grafakos_Song_Yan_Bilinear_Bochner_Riesz_2015}, an application of Minkowski's integral inequality and H\"older's inequality implies that
    \begin{align}
    \label{Application of Minkowski and Holder}
        \|\mathcal{B}_j^{\alpha}(f,g)\|_{L^p} &\leq \|\mathcal{K}_j^{\alpha}\|_{L^1(G \times G)} \|f\|_{L^{p_1}} \|g\|_{L^{p_2}} .
    \end{align}
    Hence from \ref{Application of hebisch result} and \eqref{Application of Minkowski and Holder} we obtain
    \begin{align*}
        \|\mathcal{B}_j^{\alpha}(f,g)\|_{L^p} &\lesssim 2^{-j\delta} \|f\|_{L^{p_1}} \|g\|_{L^{p_2}} ,
    \end{align*}
    where since $\alpha>d-1/2$ we can choose $s>d$ such that $\delta=\alpha+1/2-s>0$.

This completes the proof of the claim \eqref{Inequality: Estimate of Bj} for the point $(p_1,p_2,p)=(\infty,\infty,\infty)$. \qed

\section{Proof of the claim (\ref{Reduction to the same support ball}) at \texorpdfstring{$(p_1,p_2,p)=(2,\infty,2)$}{}{}}
\label{Section: proof of claim B}



In this section we establish the claim \eqref{Reduction to the same support ball} for the point $(p_1,p_2,p)=(2,\infty,2)$. Note that here $\alpha(2, \infty)= d/2$. To  derive the required estimate, the main ingredient we use is the weighted Plancherel estimate with respect to the first-layer weight (see Proposition \ref{Proposition: First layer weighted Plancherel for limited alpha}). Let $\gamma >0$. Then an application of Cauchy-Schwartz inequality implies that 
\begin{align}
\label{Estimate of first L2 norm in 2 infinity}
    & \|\chi_{B_j} \mathcal{B}^{\alpha}_{j}(\mathfrak{F}_j, \mathfrak{G}_j)\|_{L^2} \leq \Big[ \sup_x \Big(\int_{G} \frac{|\mathfrak{G}_j(z,s)|^2}{|z-x|^{2\gamma}} \, d(z,s)\Big)^{\frac{1}{2}} \Big] \times \\
    &\nonumber \hspace{1cm} \Big(\int_{G \times G} |x-z|^{2\gamma} \Big| \int_{G} \mathcal{K}^{\alpha}_{j} ((y,t)^{-1}(x,u), (z,s)^{-1}(x,u)) \mathfrak{F}_j(y,t)\, d(y,t) \Big|^2 d(z,s) \, d(x,u) \Big)^{\frac{1}{2}} .
\end{align}

For the first factor in the right hand side of the above inequality, using H\"older's inequality and Lemma \eqref{Lemma: Integral of weight over ball} for any $0\leq \gamma< d_1/2$, we get 
\begin{align}
\label{Inequality: Integration of g with weight}
    \Big(\int_{G} \frac{|\mathfrak{G}_j(z,s)|^2}{|z-x|^{2\gamma}} \, d(z,s)\Big)^{\frac{1}{2}} & \leq C \|\mathfrak{G}_j\|_{L^{\infty}} 2^{j(1+\varepsilon)(Q/2-\gamma)} .
\end{align}

In order to estimate the second factor, let us interpret the integral inside the modulus as the kernel of a spectral multiplier of sub-Laplacian in the following way,
\begin{align}
\label{Equation: simplification of bilinear kernel into linear kernel}
   & \int_{G} \mathcal{K}^{\alpha}_{j} ((y,t)^{-1}(x,u), (z,s)^{-1}(x,u))\, \mathfrak{F}_j(y,t)\, d(y,t) \\
   &\nonumber = \frac{1}{(2\pi)^{d_2}} \int_{\mathfrak{g}_{2,r}^{*}} e^{i \langle \lambda_2, u-s \rangle} \sum_{\mathbf{k}_2 \in \mathbb{N}^\Lambda} F_{(x,u)}^j (\eta_{\mathbf{k}_2}^{\lambda_2})\, \varphi_{\mathbf{k}_2}^{\mathbf{b}^{\lambda_2}, \mathbf{r}_2}(R_{\lambda_2}^{-1} (x-z)) \exp{\left(\tfrac{i}{2} \lambda_2([x, z]) \right)} \, d\lambda_2\\
   &\nonumber =: \mathcal{K}_{F_{(x,u)}^j(\mathcal{L})} (x-z, u-s-\tfrac{1}{2}[z,x]),
\end{align}
where 
\begin{align*}
  F_{(x,u)}^j(\eta_{\mathbf{k}_2}^{\lambda_2}) &= \frac{1}{(2\pi)^{d_2}} \int_{\mathfrak{g}_{2,r}^{*}} e^{i \langle \lambda_1, u \rangle} \sum_{\mathbf{k}_1 \in \mathbb{N}^\Lambda} \Psi_j^{\alpha}(\eta_{\mathbf{k}_1}^{\lambda_1}, \eta_{\mathbf{k}_2}^{\lambda_2}) \left[\mathfrak{F}_j^{\lambda_1} \times_{\lambda_1} \varphi_{\mathbf{k}_1}^{\mathbf{b}^{\lambda_1}, \mathbf{r}_1}(R_{\lambda_1}^{-1}\cdot) \right](x)\, d\lambda_1 .
\end{align*}
Thus, with the help of \eqref{Equation: simplification of bilinear kernel into linear kernel} and applying Proposition \ref{Proposition: First layer weighted Plancherel for limited alpha} for $0\leq \gamma<d_2/2$, we obtain
\begin{align}
\label{Estimate: Estimate of first term for 2,infinity,2}
    & \Big[\int_{G} \int_{G} |x-z|^{2\gamma}  \Big| \int_{G} \mathcal{K}^{\alpha}_{j} ((y,t)^{-1}(x,u), (z,s)^{-1}(x,u))\, \mathfrak{F}_j(y,t)\, d(y,t) \Big|^2 d(z,s) \, d(x,u) \Big]^{\frac{1}{2}} \\
    &\nonumber = \Big[\int_{G} \left(\int_{G} |x-z|^{2\gamma} |\mathcal{K}_{F_{(x,u)}^j(\mathcal{L})} (x-z, u-s-\tfrac{1}{2}[z,x]) |^2 d(z,s) \right) d(x,u) \Big]^{\frac{1}{2}} \\
    &\nonumber \leq C \Big[\int_{G} \int_{0}^{1} |F_{(x,u)}^j(\eta_2)|^2 \ d\eta_2\ d(x,u) \Big]^{\frac{1}{2}} .
\end{align}
As a result, the final expression in the quantity above can be estimated as follows.
\begin{align}
\label{Estimate of G(x,u) for 2 infinity}
    & \int_{0}^{1} \int_{G} |F_{(x,u)}^j(\eta_2)|^2 \ d(x,u)\ d\eta_2 \\
    &\nonumber=C \int_{0}^{1} \int_{\mathfrak{g}_{2,r}^{*}} \sum_{\mathbf{k}_1 \in \mathbb{N}^\Lambda} |\Psi_j^{\alpha}(\eta_{\mathbf{k}_1}^{\lambda_1}, \eta_2)|^2 \|\mathfrak{F}_j^{\lambda_1} \times_{\lambda_1} \varphi_{\mathbf{k}_1}^{\mathbf{b}^{\lambda_1}, \mathbf{r}_1}(R_{\lambda_1}^{-1}\cdot)\|_{L^2}^2 \, d\lambda_1 \, d\eta_2 \\
    &\nonumber = C \int_{\mathfrak{g}_{2,r}^{*}} \sum_{\mathbf{k}_1 \in \mathbb{N}^\Lambda} \Big(\int_{0}^{1} |\Psi_j^{\alpha}(\eta_{\mathbf{k}_1}^{\lambda_1}, \eta_2)|^2 \, d\eta_2 \Big) \|\mathfrak{F}_j^{\lambda_1} \times_{\lambda_1} \varphi_{\mathbf{k}_1}^{\mathbf{b}^{\lambda_1}, \mathbf{r}_1}(R_{\lambda_1}^{-1}\cdot)\|_{L^2}^2 \, d\lambda_1 \\
    &\nonumber \leq C 2^{-2 j \alpha} 2^{-j} \|\mathfrak{F}_j\|_{L^2}^2 ,
\end{align}
where we have used the fact that, $\displaystyle{\sup_{\eta_{\mathbf{k}_1}^{\lambda_1}} \int_{0}^{1} |\Psi_j^{\alpha}(\eta_{\mathbf{k}_1}^{\lambda_1}, \eta_2)|^2 \, d\eta_2 \leq C\, 2^{-2 j \alpha}\, 2^{-j}}$.

Finally, combining \eqref{Inequality: Integration of g with weight}, \eqref{Estimate: Estimate of first term for 2,infinity,2}, and \eqref{Estimate of G(x,u) for 2 infinity} and plugging them into the estimate \eqref{Estimate of first L2 norm in 2 infinity}, yields
\begin{align*}
    \|\chi_{B_j} B^{\alpha}_{j} (\mathfrak{F}_j, \mathfrak{G}_j)\|_{L^2} &\leq C 2^{-j \alpha} 2^{-j/2} \|\mathfrak{F}_j\|_{L^2} \|\mathfrak{G}_j\|_{L^{\infty}} 2^{j(1+\varepsilon)(Q/2-\gamma)} \\
    &\leq C 2^{-j \delta} \|\mathfrak{F}_j\|_{L^2} \|\mathfrak{G}_j\|_{L^{\infty}} , 
\end{align*}
where as $\alpha>(d-1)/2$, we can choose $\varepsilon>0$ so small and $\gamma$ very close to $d_2/2$ such that $\delta=\alpha-(Q/2-\gamma)(1+\varepsilon)-1/2>0$. It is important to note that since $G$ is M\'etivier group, we always have $d_1>d_2$, so that $0\leq \gamma<d_2/2<d_1/2$.

This completes the proof of the claim \eqref{Reduction to the same support ball} for the point $(p_1,p_2,p)=(2,\infty,2)$. \qed

\section{Proof of the claim (\ref{Reduction to the same support ball}) at \texorpdfstring{$(p_1,p_2,p)=(1,\infty,1)$}{}{}}
\label{Section: Proof of claim at 1 infinity}


In this Section, we focus on proving the claim \eqref{Reduction to the same support ball} for the points $(p_1, p_2,p)=(1,\infty,1)$. Note that for $(p_1, p_2,p)=(1, \infty,1)$, we have $\alpha(1, \infty)=Q/2$. Using Cauchy-Schwartz inequality, \eqref{First layer restriction estimate in second prop} of Proposition \ref{Proposition: First layer weighted Plancherel for limited alpha}, the fact from (\ref{Convergence of sum of l using phi}), and H\"older's inequality from the expression (\ref{Equation: Decomposition in terms of Fourier series}), we obtain
\begin{align*}
   \|\chi_{B_j} B^{\alpha}_{j} (\mathfrak{F}_j, \mathfrak{G}_j)\|_{L^1} &\leq C \sum_{l \in \mathbb{Z}} \| \phi_{j,l}^{\alpha}(\mathcal{L})\mathfrak{F}_j\|_{L^2} \| \psi_l(\mathcal{L})\mathfrak{G}_j \|_{L^2} \\
   &\leq C \sum_{l \in \mathbb{Z}} \frac{1}{(1+|l|)^{(1+\varepsilon)}} \{(1 + |l|)^{1 + \varepsilon} \|\phi_{j,l}^{\alpha}\|_{L^{\infty}}\} \|\mathfrak{F}_j\|_{L^{1}} \|\mathfrak{G}_j\|_{L^{2}} \\
   &\leq C 2^{-j\alpha} 2^{j \varepsilon} \|\mathfrak{F}_j\|_{L^{2}} 2^{j Q/2} \|\mathfrak{G}_j\|_{L^{\infty}} \\
   &\leq C 2^{-j \delta} \|\mathfrak{F}_j\|_{L^{2}} \|\mathfrak{G}_j\|_{L^{\infty}} .
\end{align*}
where $\delta=\alpha-Q/2-\varepsilon>0$, since $\alpha > Q/2$ we can choose $\varepsilon>0$ so small such that $\alpha -Q/2 - \varepsilon>0$.

This completes the proof of the claim \eqref{Reduction to the same support ball} for the point $(p_1,p_2,p)=(1,\infty,1)$. \qed

\section{Proof of Theorem \ref{Bilinear Bochner-Riesz theorem with restricted f and g}}
\label{Section: Proof of theorem for restricted f and g}
In this section, we prove Theorem \ref{Bilinear Bochner-Riesz theorem with restricted f and g}. Under certain assumptions on the support of the Fourier transforms of $f$ and $g$, this theorem serves as a precise analogue of the corresponding Euclidean results (see Theorem \ref{Theorem: Euclidean bilinear Bochner-Riesz for Grafakos}) except at the point $(1,1,1/2)$. In our setting, the Euclidean dimension $n$ in the smoothness threshold is replaced by the topological dimension $d$ of $G$.

Analogous to  Theorem \ref{Bilinear Bochner-Riesz Main theorem}, the proof of Theorem \ref{Bilinear Bochner-Riesz theorem with restricted f and g} essentially reduces to the estimate of $\mathcal{B}^{\alpha}_{j,1}$ for the points $(p_1,p_2,p) \in \{(1,1,1/2)$,$(1,2,2/3)$,$(2,2,1)$,$(1,\infty,1)$,$(2,\infty,2)$, $(\infty, \infty, \infty) \}$ (see \ref{subsection:Estimate of Bj1}). Here we only prove Theorem \ref{Bilinear Bochner-Riesz theorem with restricted f and g} at the point $(p_1,p_2,p)=(1,\infty,1)$. The argument for the remaining cases follows from similar ideas.

\subsection*{Proof of Theorem \ref{Bilinear Bochner-Riesz theorem with restricted f and g} for \texorpdfstring{$\mathbf{(p_1,p_2,p)=(1,\infty,1)}$}{}}
Note that in this case $\alpha(1, \infty)=d/2$ and $\supp \mathcal{F}_2 g(z, \cdot) \subseteq \{\lambda_2 : |\lambda_2| \geq \kappa_2 \}$ for some $\kappa_2>0$ and every $z \in \mathfrak{g}_2$.

Let $\Omega : \mathbb{R} \to \mathbb{R}$ be a smooth function such that, $1-\Omega$ is bump function which equals to $1$ in $(-\kappa_2/2, \kappa_2/2)$ and is supported on $(-\kappa_2, \kappa_2)$. Then from \eqref{Expression after dyadic decomposition}, for each $j \geq 0$, using the support of $\mathcal{F}_2 g(z, \cdot)$, we can express
\begin{align*}
    \mathcal{B}_{j}^{\alpha}(f,g)(x,u) &= \frac{1}{(2\pi)^{2 d_2}} \int_{\mathfrak{g}_{2,r}^{*}} \int_{\mathfrak{g}_{2,r}^{*}} e^{i \langle \lambda_1 + \lambda_2 , u \rangle} \sum_{\mathbf{k}_1, \mathbf{k}_2 \in \mathbb{N}^N} \Psi_j^{\alpha}(\eta_{\mathbf{k}_1}^{\lambda_1}, \eta_{\mathbf{k}_2}^{\lambda_2}) \Omega(|\lambda_2|) \\
    & \hspace{2cm} \left[f^{\lambda_1} \times_{\lambda_1} \varphi_{\mathbf{k}_1}^{\mathbf{b}^{\lambda_1}, \mathbf{r}}(R_{\lambda_1}^{-1}\cdot) \right](x) \left[g^{\lambda_2} \times_{\lambda_2} \varphi_{\mathbf{k}_2}^{\mathbf{b}^{\lambda_2}, \mathbf{r}}(R_{\lambda_2}^{-1}\cdot) \right](x) \,  d\lambda_1 \,  d\lambda_2 \\
    &=: \mathcal{B}_{j}^{\alpha, \kappa_2}(f,g)(x,u) .
\end{align*}
Let $\mathcal{K}_j^{\alpha, \kappa_2}$ denote the kernel of the operator $\mathcal{B}_{j}^{\alpha, \kappa_2}$. Similarly as in \eqref{Reduction to the same support ball}, it is enough to prove that, whenever $\alpha>d/2$, there exists a $\delta>0$ such that
\begin{align*}
    \| \chi_{B_j} \mathcal{B}^{\alpha, \kappa_2}_{j} (\mathfrak{F}_j ,\mathfrak{G}_j) \|_{L^{1}(G)} \leq C 2^{-j \delta} \|\mathfrak{F}_j\|_{L^{1}(G)} \|\mathfrak{G}_j\|_{L^{\infty}(G)} \quad \text{whenever} \ \supp{\mathfrak{F}_j}, \supp{\mathfrak{G}_j} \subseteq B_j ,
\end{align*}

Furthermore, from (\ref{Equation: Decomposition in terms of Fourier series}), we also decompose $\mathcal{B}_{j}^{\alpha, \kappa_2}$ as follows:
\begin{align*}
    \chi_{B_j}(x,u) \mathcal{B}_{j}^{\alpha, \kappa_2}(\mathfrak{F}_j,\mathfrak{G}_j)(x,u) &= C \chi_{B_j}(x,u) \sum_{l \in \mathbb{Z}} \left\{ \phi_{j,l}^{\alpha}(\mathcal{L}) \mathfrak{F}_j(x,u) \right\} \left\{ \psi_l^{\kappa_2}(\mathcal{L}, T) \mathfrak{G}_j(x,u) \right\} ,
\end{align*}
where $\psi_l^{\kappa_2} : \mathbb{R} \times \mathbb{R} \to \mathbb{C}$ defined by $\psi_l^{\kappa_2}(\eta_2, \tau_2)= \psi_l(\eta_2) \Omega(\tau_2)$. Let $\Theta$ be the function as defined in \eqref{Definition: Cutoff function theta}. Then similar to \eqref{Use of Remark in Niedorf}, for $M_1, M_2 \in \mathbb{Z}$, we have the following decomposition 
\begin{align}
\label{Cutoff of M1 for the first linear multiplier}
    \phi_{j,l}^{\alpha}(\mathcal{L}) \mathfrak{F}_j &= \sum_{M_1=-\ell_0 }^{\infty} \phi_{j,l,M_1}^{\alpha}(\mathcal{L}, T) \mathfrak{F}_j , \quad \text{and} \quad \psi_{l}(\mathcal{L}) \mathfrak{G}_j = \sum_{M_2=-\ell_0 }^{\infty} \psi_{l,M_2}(\mathcal{L}, T)\mathfrak{G}_j ,
\end{align}
where
\begin{align*}
    \phi_{j,l,M_1}^{\alpha} (\eta_1, \tau_1)= \phi_{j,l}^{\alpha}(\eta_1) \,\Theta(2^{M_1}\tau_1) \quad \text{and} \quad \psi_{l,M_2}(\eta_2, \tau_2) = \psi_{l}(\eta_2) \,\Theta(2^{M_2}\tau_2) .
\end{align*}
Consequently, right now only introducing $M_1$-cut-off we can write
\begin{align}
\label{decomposition in M1 for 1,2}
    & \chi_{B_j}(x,u) \mathcal{B}_{j}^{\alpha, \kappa_2}(\mathfrak{F}_j,\mathfrak{G}_j)(x,u) \\
    &\nonumber= C\chi_{B_j}(x,u) \Big(\sum_{M_1=-\ell_0 }^{j} + \sum_{M_1=j+1 }^{\infty} \Big) \sum_{l \in \mathbb{Z}} \phi_{j,l,M_1}^{\alpha}(\mathcal{L}, T) \mathfrak{F}_j(x,u)\, \psi_{l}^{\kappa_2}(\mathcal{L}, T) \mathfrak{G}_j(x,u) \\
    &\nonumber=: S_1 + S_2 .
\end{align}

\subsection{Estimate of \texorpdfstring{$S_{2}$}{}{}}
Estimate of $I_4$ is easy and can be easily handled by Proposition \ref{Proposition: First layer weighted Plancherel}. An application of H\"older's inequality and \eqref{Restriction estimate inside prop} of Proposition \ref{Proposition: First layer weighted Plancherel} yields 
\begin{align*}
    \|S_2\|_{L^{1}} &\leq C \sum_{l \in \mathbb{Z}} \sum_{M_1=j+ 1 }^{\infty} \| \phi_{j,l, M_1}^{\alpha}(\mathcal{L}, T)\mathfrak{F}_j \|_{L^2} \|\psi_l^{\kappa_2}(\mathcal{L}, T)\mathfrak{G}_j \|_{L^2} \\
    &\leq C \sum_{l \in \mathbb{Z}} \sum_{M_1=j+ 1 }^{\infty} 2^{-M_1 d_2/2} \|\phi_{j,l}^{\alpha}\|_{L^{\infty}} \| \mathfrak{F}_j\|_{L^1} \| \mathfrak{G}_j \|_{L^2} .
\end{align*}
Furthermore, summing over $M_1 \geq j+1$, using the estimate \eqref{Convergence of sum of l using phi} and since $\supp{\mathfrak{G}_j} \subseteq B(0, 3 \cdot 2^{j(1+\varepsilon)})$, we obtain
\begin{align*}
    \|S_2\|_{L^{1}} &\leq C 2^{j \varepsilon} 2^{-j \alpha} 2^{-j d_2/2} \left\| \mathfrak{F}_j \right\|_{L^1} \|\mathfrak{G}_j\|_{L^{\infty}} 2^{jQ(1+\varepsilon)/2} \\ 
    &\leq C 2^{-j \delta} \| \mathfrak{F}_j \|_{L^1} \|\mathfrak{G}_j\|_{L^{\infty}} ,
\end{align*}
where $\delta=\alpha-d/2-\varepsilon(1+Q/2)>0$, as for $\alpha>d/2$, we can choose $\varepsilon$ sufficiently small such that $\alpha-d/2-\varepsilon(1+Q/2)>0$.

\subsection{Estimate of \texorpdfstring{$S_{1}$}{}{}}
To estimate $S_1$, as in \eqref{Cutoff of M1 for the first linear multiplier}, let us introduce an additional cut-off in $|\lambda_2|$ variable.
\begin{align*}
    S_1 &= C \chi_{B_j}(x,u) \sum_{l \in \mathbb{Z}} \sum_{M_1=-\ell_0 }^{j} \sum_{M_2=-\ell_0 }^{\infty} \phi_{j,l,M_1}^{\alpha}(\mathcal{L}, T) \mathfrak{F}_j(x,u)\, \psi_{l, M_2}^{\kappa_2}(\mathcal{L}, T) \mathfrak{G}_j(x,u) ,
\end{align*}
where $\psi_{l,M_2}^{\kappa_2}(\eta_2, \tau_2) = \psi_l(\eta_2) \Omega(\tau_2) \,\Theta(2^{M_2}\tau_2)$.

Note that, due to the support of $\Omega$ and $\Theta$, there exists $L_0>0$ depending on $\delta_0$ such that $M_2 \leq L_0$. In order to estimate $S_1$, we have to further decompose both the support of $\mathfrak{F}_j$ and $\mathfrak{G}_j$. Recall that $\supp{\mathfrak{F}_j}, \supp{\mathfrak{G}_j}\subseteq B\left(0, 3 \cdot 2^{j(1+\varepsilon)}\right)$. Hence applying Lemma \ref{Lemma: Decomposition of ball}, there exists a $C>0$ such that
\begin{align*}
    B\left(0, 3 \cdot 2^{j(1+\varepsilon)}\right) \subseteq B^{|\cdot|}\left(0, 3C \cdot 2^{j(1+\varepsilon)}\right) \times B^{|\cdot|}\left(0, 9C \cdot 2^{2j(1+\varepsilon)}\right) .
\end{align*}
Note that in $S_1$, we always have $-\ell_0 \leq M_1 \leq j$. Accordingly, for each $M_1 \in \{-\ell_0, \ldots, j\}$, we decompose $B^{|\cdot|}\left(0, 3C \cdot 2^{j(1+\varepsilon)}\right) \times B^{|\cdot|}\left(0, 9C \cdot 2^{2j(1+\varepsilon)}\right)$ with respect to the first layer into disjoint sets $S_{m_1}^{M_1}$ such that
\begin{align}
\label{Expression: Decomposition of ball into smaller balls}
    \supp{\mathfrak{F}_j} &= \bigcup_{m_1=1}^{N_{M_1}} S_{m_1}^{M_1} ,
\end{align}
with the property 
\begin{align}
\label{Property of the smaller balls}
    S_{m_1}^{M_1} \subseteq B^{|\cdot|}\left(a_{m_i}^{M_1}, 3C \cdot 2^{M_1 (1+\varepsilon)}\right) \times B^{|\cdot|}\left(0, 9C \cdot 2^{2j(1+\varepsilon)}\right)
\end{align}
and whenever $m_i \neq m_i'$,  $|a_{m_1}^{M_1}-a_{m_1'}^{M_1}|> 3 C 2^{M_1 (1+\varepsilon)}/2 $ holds. Furthermore, the number of subsets $N_{M_1}$ in this decomposition is bounded by constant times $2^{(j-M_1)(1+\varepsilon)d_1}$. For each $1\leq m_1 \leq N_{M_1}$ and $\gamma>0$, we also define
\begin{align}
\label{Defination of dilated balls}
    \widetilde{S}_{m_1}^{M_1} &:= B^{|\cdot|}\left(a_{m_1}^{M_1}, 3 C \cdot 2^{M_1 (1+\varepsilon)} 2^{\gamma j+1} \right) \times B^{|\cdot|}\left(0, C 2^{2j (1+\varepsilon)}\right) .
\end{align}
Similarly we also decompose $\supp{\mathfrak{G}_j}$. With the aid of the above decomposition, we express $\mathfrak{F}_j$ and $\mathfrak{G}_j$ as: 
\begin{align}
\label{Decomposition of f and g}
    \mathfrak{F}_j = \sum_{m_1=1}^{N_{M_1}} \mathfrak{F}_{m_1}^j \quad \quad \text{and} \quad \quad \mathfrak{G}_j = \sum_{m_2=1}^{N_{M_1}} \mathfrak{G}_{m_2}^j ,
\end{align}
where $\mathfrak{F}_{m_1}^j = \mathfrak{F}_j \chi_{S_{m_1}^{M_1}} $ and $\mathfrak{G}_{m_2}^j = \mathfrak{G}_j \chi_{S_{m_2}^{M_1}}$.

Consequently, with the help of (\ref{Defination of dilated balls}) and (\ref{Decomposition of f and g}), we break the summand $S_1$ into three parts as follows 
\begin{align*}
    &S_{1}= \sum_{M_1=-\ell_0 }^{j} \sum_{M_2=-\ell_0 }^{L_0} \sum_{m_1=1}^{N_{M_1}} \chi_{B_j}(x,u) (1-\chi_{\widetilde{S}_{m_1}^{M_1}})(x,u) \mathcal{B}^{\alpha, \kappa_2}_{j,M_1, M_2}(\mathfrak{F}_{m_1}^j, \mathfrak{G}_j)(x,u) \\
    &+\sum_{M_1=-\ell_0 }^{j} \sum_{M_2=-\ell_0 }^{L_0} \sum_{m_1=1}^{N_{M_1}} \sum_{m_2=1}^{N_{M_1}} \chi_{B_j}(x,u) \chi_{\widetilde{S}_{m_1}^{M_1}}(x,u) \chi_{\widetilde{S}_{m_2}^{M_1}}(x) \mathcal{B}^{\alpha, \kappa_2}_{j,M_1, M_2}(\mathfrak{F}_{m_1}^j, \mathfrak{G}_{m_2}^j)(x,u) \\
    &+ \sum_{M_1=-\ell_0 }^{j} \sum_{M_2=-\ell_0 }^{L_0} \sum_{m_1=1}^{N_{M_1}} \sum_{m_2=1}^{N_{M_1}} \chi_{B_j}(x,u) \chi_{\widetilde{S}_{m_1}^{M_1}}(x,u) (1-\chi_{\widetilde{S}_{m_2}^{M_1}})(x,u)  \mathcal{B}^{\alpha, \kappa_2}_{j,M_1, M_2}(\mathfrak{F}_{m_1}^j, \mathfrak{G}_{m_2}^j)(x,u) \\
    &=: S_{11} + S_{12} + S_{13} ,
\end{align*}
where
\begin{align*}
    \mathcal{B}^{\alpha, \kappa_2}_{j,M_1, M_2}(f, g)(x,u) &= \sum_{l \in \mathbb{Z}} \phi_{j,l,M_1}^{\alpha}(\mathcal{L}, T) f(x,u) \psi_{l, M_2}^{\kappa_2}(\mathcal{L}, T) g(x,u) .
\end{align*}

\subsubsection{\textbf{Estimate of} \texorpdfstring{$S_{11}$}{}{}}
\label{subsubsection: estimate of S11}
We show that $S_{11}$ has arbitrarily large decay. An application of H\"older's inequality implies
\begin{align}
\label{Estimate: Estimate of S_{12} in 1,1}
     \|S_{11}\|_{L^{1}} & \leq \sum_{M_1=-\ell_0 }^{j} \sum_{M_2=-\ell_0 }^{L_0} \sum_{m_1=1}^{N_{M_1}} \|\chi_{B_j} (1-\chi_{\widetilde{S}_{m_1}^{M_1}}) \mathcal{B}^{\alpha, \kappa_2}_{j,M_1, M_2}(\mathfrak{F}_{m_1}^j, \mathfrak{G}_j) \|_{L^{1}} .
\end{align}
Again using H\"older's inequality, we further see that 
\begin{align}
\label{Estimate: Application of Holder inequality for error part in 1,1}
    & \|\chi_{B_j} (1-\chi_{\widetilde{S}_{m_1}^{M_1}}) \mathcal{B}^{\alpha, \kappa_2}_{j,M_1, M_2}(\mathfrak{F}_{m_1}^j, \mathfrak{G}_j)\|_{L^{1}} \\
    & \nonumber \leq C \sum_{l \in \mathbb{Z}} \|\chi_{B_j} (1-\chi_{\widetilde{S}_{m_1}^{M_1}}) \phi_{j,l,M_1}^{\alpha}(\mathcal{L}, T) \mathfrak{F}_{m_1}^j \|_{L^2} \| \psi_{l,M_2}^{\kappa_2}(\mathcal{L}, T) \mathfrak{G}_j \|_{L^2} .
\end{align}
Let us focus on the factor $\|\chi_{B_j} (1-\chi_{\widetilde{S}_{m_1}^{M_1}}) \phi_{j,l,M_1}^{\alpha}(\mathcal{L}, T) \mathfrak{F}_{m_1}^j \|_{L^2}$. We denote the convolution kernel of $\phi_{j,l,M_1}^{\alpha}(\mathcal{L}, T)$ by $\mathcal{K}_{\phi_{j,l,M_1}^{\alpha}(\mathcal{L}, T)}$. An application of Minkowski's integral inequality gives
\begin{align}
\label{Estimate: Application of Minkowski inequality for error part in 1,1}
    & \|\chi_{B_j} (1-\chi_{\widetilde{S}_{m_1}^{M_1}}) \phi_{j,l,M_1}^{\alpha}(\mathcal{L}, T) \mathfrak{F}_{m_1}^j \|_{L^2} \\
    &\nonumber \leq \int_{G} |\mathfrak{F}_{m_1}^j(y,t)| \Big( \int_{G} |\chi_{B_j}(x,u) (1-\chi_{\widetilde{S}_{m_1}^{M_1}})(x,u)  \\
    &\nonumber \hspace{6cm} \times  \mathcal{K}_{\phi_{j,l,M_1}^{\alpha}(\mathcal{L}, T)}((y,t)^{-1}(x,u))|^2 \ d(x,u) \Big)^{1/2} d(y,t) .
\end{align}
Note that if $(x,u) \in \supp{\chi_{B_j} (1-\chi_{\widetilde{S}_{m_1}^{M_1}})}$ and $(y,t) \in \supp{\mathfrak{F}_{m_1}^j}$, then
\begin{align*}
    |x-a_{m_1}^{M_1}| \geq C 2^{\gamma j+1} 2^{M_1 (1+\varepsilon)} \quad \text{and} \quad |y-a_{m_1}^{M_1}| \leq C 2^{M_1 (1+\varepsilon)} ,
\end{align*}
and this in particular implies $|x-y| \geq C 2^{\gamma j} 2^{M_1 (1+\varepsilon)}$. Therefore, using this observation along with the translation invariance of the Haar measure, we see that for any $N >0$,
\begin{align}
\label{Estimate: Weighted plancherel on outside ball in (1,1,1/2)}
    & \Big( \int_{G} |\chi_{B_j}(x,u) (1-\chi_{\widetilde{S}_{m_1}^{M_1}})(x,u) \mathcal{K}_{\phi_{j,l,M_1}^{\alpha}(\mathcal{L}, T)}((y,t)^{-1}(x,u))|^2 \ d(x,u) \Big)^{1/2} \\
    &\nonumber \leq C (2^{\gamma j} 2^{M_1 (1+\varepsilon)})^{-N} \Big( \int_{G} ||x-y|^{N} \mathcal{K}_{\phi_{j,l,M_1}^{\alpha}(\mathcal{L}, T)}(x-y, u-t-\tfrac{1}{2}[y,x])|^2 \ d(x,u) \Big)^{1/2} \\
    &\nonumber \leq C (2^{\gamma j} 2^{M_1 (1+\varepsilon)})^{-N} \Big( \int_{G} ||x|^{N} \mathcal{K}_{\phi_{j,l,M_1}^{\alpha}(\mathcal{L}, T)}(x, u)|^2 \ d(x,u) \Big)^{1/2} .
\end{align}
Then substituting the above estimate (\ref{Estimate: Weighted plancherel on outside ball in (1,1,1/2)}) into \eqref{Estimate: Application of Minkowski inequality for error part in 1,1} and applying \eqref{First layer kernel estimate inside Prop} Proposition \ref{Proposition: First layer weighted Plancherel} for $\phi_{j,l}^{\alpha}$ in place of $F$ leads us to 
\begin{align}
\label{Estimate: Outside of M1 ball for f in (1,1,1/2)}
    \|\chi_{B_j} (1-\chi_{\widetilde{S}_{m_1}^{M_1}}) \phi_{j,l,M_1}^{\alpha}(\mathcal{L}, T) \mathfrak{F}_{m_1}^j \|_{L^2} &\leq C 2^{-\gamma j N} 2^{-M_1 \varepsilon N} 2^{-M_1 d_2/2} \|\phi_{j,l}^{\alpha}\|_{L^{\infty}} \|\mathfrak{F}_{m_1}^j\|_{L^1},
\end{align}
for any $N>0$.

Therefore from \eqref{Estimate: Application of Holder inequality for error part in 1,1} and using the estimate \eqref{Estimate: Outside of M1 ball for f in (1,1,1/2)}, along with the fact established in \eqref{Convergence of sum of l using phi}, we obtain
\begin{align*}
    & \|\chi_{B_j} (1-\chi_{\widetilde{S}_{m_1}^{M_1}}) \mathcal{B}^{\alpha, \kappa_2}_{j,M_1, M_2}(\mathfrak{F}_{m_1}^j, \mathfrak{G}_j)\|_{L^{1}} \\
    &\leq C \sum_{l \in \mathbb{Z}} C 2^{-\gamma j N} 2^{-M_1 \varepsilon N} 2^{-M_1 d_2/2} \|\phi_{j,l}^{\alpha}\|_{L^{\infty}} \|\mathfrak{F}_{m_1}^j\|_{L^1} \|\mathfrak{G}_j\|_{L^2} \\
    &\leq C 2^{j \varepsilon} 2^{-j \alpha} 2^{-\gamma j N} 2^{-M_1 \varepsilon N} 2^{-M_1 d_2/2} \|\mathfrak{F}_{m_1}^j\|_{L^1} 2^{j Q (1+\varepsilon)/2} \|\mathfrak{G}_j\|_{L^{\infty}} .
\end{align*}
Consequently, with the help of the above estimate and by choosing $N>0$ large enough and $\varepsilon>0$ very small, there exists a $\delta>0$ such that
\begin{align}
\label{Estimate of S12 for 1,q0}
    \|S_{11}\|_{L^{1}} & \leq \sum_{M_1=-\ell_0 }^{j} \sum_{M_2=-\ell_0 }^{L_0} \sum_{m_1=1}^{N_{M_1}} \|\chi_{B_j} (1-\chi_{\widetilde{S}_{m_1}^{M_1}}) \mathcal{B}^{\alpha, \kappa_2}_{j,M_1, M_2}(\mathfrak{F}_{m_1}^j, \mathfrak{G}_j)\|_{L^{1}} \\
    &\nonumber \leq C 2^{j \varepsilon} 2^{-j \alpha} 2^{-\gamma j N} 2^{jQ(1+\varepsilon)/2} \|\mathfrak{F}_j\|_{L^1} \|\mathfrak{G}_j\|_{L^{\infty}} \\
    &\nonumber \leq C 2^{-j \delta} \|\mathfrak{F}_j\|_{L^1} \|\mathfrak{G}_j\|_{L^{\infty}} .
\end{align}

\subsubsection{\textbf{Estimate of} \texorpdfstring{$S_{12}$}{}{}}
First note that by Lemma \ref{Lemma: Decomposition of ball}, there exists a constant $C>0$ such that $B_j \subseteq B^{|\cdot|}(0, C 2^{j(1+\varepsilon)}) \times B^{|\cdot|}(0, C 2^{2j(1+\varepsilon)})$. Consequently, following the approach in \eqref{Expression: Decomposition of ball into smaller balls}, \eqref{Property of the smaller balls}, we decompose $B_j$ into disjoint sets $S_{m}^{M_1}$ with respect to the first layer and write
\begin{align*}
    & S_{12} = \sum_{M_1=-\ell_0 }^{j} \sum_{M_2=-\ell_0 }^{L_0} \sum_{m=1}^{N_{M_1}} \sum_{m_1=1}^{N_{M_1}} \sum_{m_2=1}^{N_{M_1}} \chi_{S_{m}^{M_1}}(x,u) \chi_{\widetilde{S}_{m_1}^{M_1}}(x,u) \chi_{\widetilde{S}_{m_2}^{M_1}}(x,u)  \mathcal{B}^{\alpha, \kappa_2}_{j,M_1, M_2}(\mathfrak{F}_{m_1}^j, \mathfrak{G}_{m_2}^j)(x,u) \\
    &= \sum_{M_1=-\ell_0 }^{j} \sum_{M_2=-\ell_0 }^{L_0} \sum_{m=1}^{N_{M_1}} \sum_{m_1: S_{m}^{M_1} \cap \widetilde{S}_{m_1}^{M_1} \neq \emptyset} \ \  \sum_{m_2: S_{m}^{M_1} \cap \widetilde{S}_{m_2}^{M_1} \neq \emptyset} \\
    & \hspace{4cm} \chi_{S_{m}^{M_1}}(x,u) \chi_{\widetilde{S}_{m_1}^{M_1}}(x,u) \chi_{\widetilde{S}_{m_2}^{M_1}}(x,u) \mathcal{B}^{\alpha, \kappa_2}_{j,M_1, M_2}(\mathfrak{F}_{m_1}^j, \mathfrak{G}_{m_2}^j)(x,u) .
\end{align*}

Let us first estimate $\|\mathcal{B}^{\alpha, \kappa_2}_{j,M_1, M_2}(\mathfrak{F}_{m_1}^j, \mathfrak{G}_{m_2}^j) \|_{L^{1}}$. An application of H\"older's inequality, \eqref{Restriction estimate inside prop} of Proposition \ref{Proposition: First layer weighted Plancherel}, and the fact \eqref{Convergence of sum of l using phi} yields
\begin{align}
\label{Estimate: L1 estimate for the point 1,q0}
    & \| \mathcal{B}^{\alpha, \kappa_2}_{j,M_1, M_2}(\mathfrak{F}_{m_1}^j, \mathfrak{G}_{m_2}^j) \|_{L^{1}} \leq C \sum_{l \in \mathbb{Z}} \|\phi_{j,l,M_1}^{\alpha}(\mathcal{L}, T) \mathfrak{F}_{m_1}^j \|_{L^2} \|\psi_{l,M_2}^{\kappa_2}(\mathcal{L}, T) \mathfrak{G}_{m_2}^j \|_{L^2} \\
    &\nonumber \leq C \sum_{l \in \mathbb{Z}} 2^{-M_1 d_2/2} \|\phi_{j,l}^{\alpha}\|_{L^{\infty}} \|\mathfrak{F}_{m_1}^j \|_{L^1} \|\mathfrak{G}_{m_2}^j\|_{L^2} \\
    &\nonumber \leq C 2^{j \varepsilon} 2^{-j \alpha} 2^{- M_1 d_2/2} \|\mathfrak{F}_{m_1}^j \|_{L^1} 2^{M_1 (1+\varepsilon) d_1/2} 2^{j (1+\varepsilon) d_2} \|\mathfrak{G}_{m_2}^j\|_{L^{\infty}} .
\end{align}
With the aid of the above estimate, we obtain the following.
\begin{align*}
    \|S_{12}\|_{L^{1}} & \leq C 2^{j \varepsilon(1+ Q/2)} 2^{-j \alpha} 2^{jd/2} \sum_{M_1=-\ell_0 }^{j} 2^{j (M_1-j)(d_1-d_2)/2} \sum_{M_2=-\ell_0 }^{L_0}  \\
    &\nonumber \hspace{1cm} \sum_{m=1}^{N_{M_1}} \Big\{\sum_{m_1: S_{m}^{M_1} \cap \widetilde{S}_{m_1}^{M_1} \neq \emptyset} \|\mathfrak{F}_{m_1}^j \|_{L^1} \Big\}  \Big\{\sum_{m_2: S_{m}^{M_1} \cap \widetilde{S}_{m_2}^{M_1} \neq \emptyset} \|\mathfrak{G}_{m_2}^j\|_{L^{\infty}} \Big\} .
\end{align*}

To continue, we need to estimate the number of overlaps between the sets $S_{m}^{M_1}$ and $\widetilde{S}_{m_i}^{M_1}$ for $i=1,2$. Since, we have chosen the disjoint sets $S_{m_i}^{M_1}$ in such a way that $|a_{m_i}^{M_1}-a_{m_i'}^{M_1}|> C 2^{M_1 (1+\varepsilon)}/2 $ for $m_i \neq m_i'$, and correspondingly defined $\widetilde{S}_{m_i}^{M_1}$ (see \eqref{Defination of dilated balls}), we have the following bounded overlapping property:
\begin{align}
\label{Bounded overlapping for m1 for 1,1}
    \sup_{m} \# \left\{ m_i : S_{m}^{M_1} \cap \widetilde{S}_{m_i}^{M_1} \neq \emptyset \right\} &\leq \sup_{m} \# \left\{ m_i : |a_{m}^{M_1} - a_{m_i}^{M_1}| \leq C 2^{{M_1}(1+\varepsilon)} 2^{\gamma j+1} \right\} \\
    &\nonumber \leq C 2^{C \gamma j} .
\end{align}
Similarly, we can also see
\begin{align}
\label{Bounded overlapping for m2 for 1,1}
    \sup_{m_i} \# \left\{ m : S_{m}^{M_1} \cap \widetilde{S}_{m_i}^{M_1} \neq \emptyset \right\} \leq C 2^{C \gamma j} .
\end{align}

Also, recall that since we are working on M\'etivier groups $G$, we always have $d_1>d_2$. Therefore, by applying the bounded overlapping property \eqref{Bounded overlapping for m1 for 1,1} and \eqref{Bounded overlapping for m2 for 1,1}, we obtain
\begin{align*}
    \|S_{12}\|_{L^{1}} & \leq C 2^{j \varepsilon(1+ Q/2)} 2^{-j \alpha} 2^{jd/2} \sum_{M_1=-\ell_0 }^{j} 2^{j (M_1-j)(d_1-d_2)/2} \sum_{M_2=-\ell_0 }^{L_0} \\
    & \hspace{1cm} \Big\{\sum_{m=1}^{N_{M_1}} \sum_{m_1: S_{m}^{M_1} \cap \widetilde{S}_{m_1}^{M_1} \neq \emptyset} \|\mathfrak{F}_{m_1}^j\|_{L^1} \Big\} \Big\{ \sup_m \sum_{m_2: S_{m}^{M_1} \cap \widetilde{S}_{m_2}^{M_1} \neq \emptyset} \|\mathfrak{G}_{m_2}^j\|_{L^{\infty}} \Big\} \\
    & \leq C 2^{j \varepsilon(1+ Q/2)} 2^{-j \alpha} 2^{2C\gamma j} 2^{jd/2} \|\mathfrak{F}_j\|_{L^1} \|\mathfrak{G}_j\|_{L^{\infty}} \\
    & \leq C 2^{-j \delta} \|\mathfrak{F}_j\|_{L^1} \|\mathfrak{G}_j\|_{L^{\infty}} ,
\end{align*}
where $\delta=\alpha-d/2-2C\gamma-\varepsilon(1+Q/2)$. Since $\alpha>d/2$, we can choose $\varepsilon$ and $\gamma$ to be sufficiently small such that $\delta>0$.

\subsubsection{\textbf{Estimate of} \texorpdfstring{$S_{13}$}{}{}}
The estimate of $S_{13}$ is similar to that of $S_{11}$ (see \ref{subsubsection: estimate of S11}), where we also obtain arbitrary large decay. This is the part where we need the assumption that the Fourier transform of $g$ in the second variable is supported outside the origin. 

Using H\"older's inequality, we observe that 
\begin{align}
\label{Estimate L1 estimate to Holder ineq}
    & \|\chi_{B_j} (1-\chi_{\widetilde{S}_{m_2}^{M_1}}) \mathcal{B}^{\alpha, \kappa_2}_{j,M_1, M_2}(\mathfrak{F}_{m_1}^j, \mathfrak{G}_{m_2}^j) \|_{L^{1}} \\
    &\nonumber \leq C \sum_{l \in \mathbb{Z}} \|\phi_{j,l,M_1}^{\alpha}(\mathcal{L}, T) \mathfrak{F}_{m_1}^j \|_{L^2} \|\chi_{B_j} (1-\chi_{\widetilde{S}_{n_2,m_2}^{M_1,j}}) \psi_{l,M_2}^{\kappa_2}(\mathcal{L}, T) \mathfrak{G}_{m_2}^j \|_{L^2} .
\end{align}

We then have, by applying Minkowski's integral inequality, 
\begin{align}
\label{Estimate: Application of Minkowski inequality for error part in 1,q0}
    & \|\chi_{B_j} (1-\chi_{\widetilde{S}_{n_2,m_2}^{M_1,j}}) \psi_{l,M_2}^{\kappa_2}(\mathcal{L}, T) \mathfrak{G}_{m_2}^j \|_{L^2} \leq \int_{G} |\mathfrak{G}_{m_2}^j(z,s)| \\
    &\nonumber \hspace{1cm} \times \left( \int_{G} |\chi_{B_j}(x,u) (1-\chi_{\widetilde{S}_{m_2}^{M_1}})(x,u) \mathcal{K}_{\psi_{l,M_2}^{\kappa_2}(\mathcal{L}, T)}((z,s)^{-1}(x,u))|^2 \ d(x,u) \right)^{1/2} d(z,s) ,
\end{align}
where $\mathcal{K}_{\psi_{l,M_2}^{\kappa_2}(\mathcal{L}, T)}$ denote the convolution kernel of $\psi_{l,M_2}^{\kappa_2}(\mathcal{L}, T)$.

If $(x,u) \in \supp{\chi_{B_j} (1-\chi_{\widetilde{S}_{m_2}^{M_1}})}$ and $(z,s) \in \supp{\mathfrak{G}_{m_2}^j}$ then one can easily see that $|x-z| \geq C\, 2^{\gamma j} 2^{M_1 (1+\varepsilon)}$. Similarly to \eqref{Estimate: Weighted plancherel on outside ball in (1,1,1/2)}, applying Proposition \ref{Proposition: First layer weighted Plancherel} for any $N>0$ yields
\begin{align}
\label{Estimate: Weighted plancherel on outside ball in (1,q0)}
    & \Big( \int_{G} |\chi_{B_j}(x,u) (1-\chi_{\widetilde{S}_{m_2}^{M_1}})(x,u) \mathcal{K}_{\psi_{l,M_2}^{\kappa_2}(\mathcal{L}, T)}((z,s)^{-1}(x,u))|^2 \ d(x,u) \Big)^{1/2} \\
    &\nonumber \leq C (2^{\gamma j} 2^{M_1 (1+\varepsilon)})^{-N} \Big( \int_{G} ||x-z|^{N} \mathcal{K}_{\psi_{l,M_2}^{\kappa_2}(\mathcal{L}, T)}(x-z, u-s-\tfrac{1}{2}[z,x])|^2 \ d(x,u) \Big)^{1/2} \\
    &\nonumber \leq C 2^{-\gamma j N} 2^{-M_1(1+\varepsilon) N} 2^{M_2(N-d_2/2)} .
\end{align}

Thus, by combining all the above estimates along with \eqref{Restriction estimate inside prop} of Proposition (\ref{Proposition: First layer weighted Plancherel}) and H\"older's inequality, from the estimate \eqref{Estimate L1 estimate to Holder ineq}, we have
\begin{align*}
    & \|\chi_{B_j} (1-\chi_{\widetilde{S}_{m_2}^{M_1}}) \mathcal{B}^{\alpha, \kappa_2}_{j,M_1, M_2}(\mathfrak{F}_{m_1}^j, \mathfrak{G}_{m_2}^j) \|_{L^{1}} \\
    &\leq C \sum_{l \in \mathbb{Z}} 2^{-M_1 d_2/2} \|\phi_{j,l}^{\alpha}\|_{L^{\infty}} \|\mathfrak{F}_{m_1}^j\|_{L^1} 2^{-\gamma j N} 2^{-M_1 (1+\varepsilon) N} 2^{M_2(N-d_2/2)} \|\mathfrak{G}_{m_2}^j\|_{L^1} \\
    &\leq C 2^{j \varepsilon} 2^{-j \alpha} 2^{-M_1 d_2/2} 2^{-\gamma j N} 2^{-M_1 (1+\varepsilon) N} 2^{M_2(N-d_2/2)} \|\mathfrak{F}_{m_1}^j\|_{L^1} 2^{j Q(1+\varepsilon)} \|\mathfrak{G}_{m_2}^j\|_{L^{\infty}} .
\end{align*}

Finally, the above estimate and the bound $N_{M_1} \lesssim 2^{(j-M_1)d_1/2}$ (see just below \eqref{Property of the smaller balls}), immediately implies that
\begin{align*}
    \|S_{13}\|_{L^1} &\leq C 2^{j \varepsilon} 2^{-j \alpha} 2^{-\gamma j N} 2^{j Q(1+\varepsilon)} \sum_{M_1=-\ell_0 }^{j} 2^{-M_1 d_2/2} 2^{-M_1 (1+\varepsilon) N} \sum_{M_2=-\ell_0 }^{L_0} 2^{M_2(N-d_2/2)} \\
    & \hspace{5cm} \times \Big\{\sum_{m_1=1}^{N_{M_1}} \|\mathfrak{F}_{m_2}^j\|_{L^1} \Big\} \Big\{\sum_{m_2=1}^{N_{M_1}} \|\mathfrak{G}_{m_2}^j\|_{L^{\infty}} \Big\} \\
    &\leq C_{N, L_0} 2^{j \varepsilon} 2^{-j \alpha} 2^{-\gamma j N} 2^{j Q(1+\varepsilon)} 2^{j d_1/2} \|\mathfrak{F}_j\|_{L^1} \|\mathfrak{G}_j\|_{L^{\infty}} .
\end{align*}
Now choosing $N>0$ large enough and $\varepsilon>0$ very small, we can find a $\delta>0$ such that
\begin{align*}
    \|S_{13}\|_{L^1} &\leq C 2^{-j \delta} \|\mathfrak{F}_j\|_{L^1} \|\mathfrak{G}_j\|_{L^{\infty}} .
\end{align*}
This completes the proof of Theorem (\ref{Bilinear Bochner-Riesz theorem with restricted f and g}) for $(p_1,p_2,p)=(1,\infty,1)$. \qed

\section{Mixed norm estimate}
\label{Section: Mixed norm estimates}
In this section, we prove Theorem \ref{Theorem: Mixed norm estimate for first layer} and Theorem \ref{Theorem: Mixed norm estimate for second layer}. Since the ideas of all these proofs are similar, we only provide the proof of Theorem \ref{Theorem: Mixed norm estimate for first layer}, and the others follow in an analogous manner. 

\subsection*{Proof of Theorem \ref{Theorem: Mixed norm estimate for first layer}}
Recall that, in view of the decomposition displayed in \eqref{Equation: Dyadic decomposition of Bochner-Riesz}, it is enough to show that, for each $j \geq 0$, whenever $\alpha>\alpha(p_1, p_2)$, there exists a $\delta>0$ such that
\begin{align*}
    \|\mathcal{B}^{\alpha}_{j}(f,g)\|_{L_x^{p'} L_u^{p''}(G)} &\leq C 2^{-j \delta} \|f\|_{L_x^{p_1'}L_u^{p_1''}(G)} \|g\|_{L_x^{p_2'}L_u^{p_2''}(G)} ,
\end{align*}
where $1\leq p_1',p_1'', p_2', p_2'' \leq \infty$, and $$1/p' = 1/p_1'+1/p_2',\quad 1/p'' = 1/p_1''+1/p_2''$$ for $(p_1',p_2',p')=(1,2,2/3)$ and $(p_1'',p_2'',p'')=(1,\infty,1)$.

Similar to \eqref{Decomposition kernel into four parts}, first we decompose $\mathcal{K}_j^{\alpha}$ as follows
\begin{align*}
     \mathcal{K}_j^{\alpha} &= \sum_{\theta_1, \theta_2 \in \{1,2,3,4\}} \mathcal{K}_{j,A_{\theta_1}, A_{\theta_2}}^{\alpha} , 
\end{align*}
where for $\varepsilon>0$,and $\theta_1, \theta_2 \in \{1,2,3,4\}$, we define
\begin{align*}
    A_{1} := B^{|\cdot|} (0, 2^{j(1+\varepsilon)}) \times B^{|\cdot|} (0, 2^{2j(1+\varepsilon)}) ; \quad \quad A_{2} := B^{|\cdot|} (0, 2^{j(1+\varepsilon)}) \times B^{|\cdot|} (0, 2^{2j(1+\varepsilon)})^c ; \\
    A_{3} := B^{|\cdot|} (0, 2^{j(1+\varepsilon)})^{c} \times B^{|\cdot|} (0, 2^{2j(1+\varepsilon)}) ; \quad \quad A_{4} := B^{|\cdot|} (0, 2^{j(1+\varepsilon)})^{c} \times B^{|\cdot|} (0, 2^{2j(1+\varepsilon)})^{c} , 
\end{align*}
and $\mathcal{K}_{j,A_{\theta_1}, A_{\theta_2}}^{\alpha}$ is given by
\begin{align}
\label{decomposition of kernel in mixed case}
      \mathcal{K}_{j,A_{\theta_1}, A_{\theta_2}}^{\alpha}((y,t), (z, s)) &= \mathcal{K}_{j}^{\alpha}((y,t), (z,s))\,  \chi_{A_{\theta_1}} (y,t) \,  \chi_{A_{\theta_2}} (z,s) .
\end{align}
Let $\mathcal{B}^{\alpha}_{j,A_{\theta_1}, A_{\theta_2}}$ denote the bilinear operator corresponding to the kernel $\mathcal{K}_{j,A_{\theta_1}, A_{\theta_2}}^{\alpha}$.

Estimate of $\mathcal{B}^{\alpha}_{j,A_{\theta_1}, A_{\theta_2}}$, except for the case where $\mathcal{B}^{\alpha}_{j,A_{1}, A_{1}}$, can be established using similar techniques to those used for estimating  $\mathcal{B}^{\alpha}_{j,l}$ for $l=2,3,4$ from Section \ref{Section: Dyadic decomposition of Bochner-Riesz}, with the help of H\"older's inequality and Young's inequality for mixed norms. As a representative case, let us prove the estimate for $\mathcal{B}^{\alpha}_{j,A_3, A_2}$; the estimates for the remaining terms can be obtained analogously.

As in subsection \eqref{Subsection: Estimate of Bj4}, by applying Lemma \ref{Lemma: Pointwise kernel estimate for Bj} for any $N>0$ and $\epsilon_1>0$, it follows that
\begin{align*}
    |\mathcal{B}^{\alpha}_{j,A_3,A_2}(f,g)(x,u)| &\leq C 2^{j@N} (|f|*k_1)(x,u) (|g|*k_2)(x,u) ,
\end{align*}
where
\begin{align*}
    k_1(y,t) = \frac{\chi_{A_3} (y,t)}{(1+\|(y,t)\|)^N} \quad  \text{and} \quad  k_2(z,s) = \frac{\chi_{A_2} (z,s)}{(1+\|(z,s)\|)^N} .
\end{align*}

Since $1/p' = 1/p_1'+1/p_2'$, $1/p'' = 1/p_1''+1/p_2''$, an application of H\"older's inequality and Young's inequality for mixed norm yields
\begin{align*}
    \|\mathcal{B}^{\alpha}_{j,A_3, A_2}(f,g)\|_{L_x^{p'} L_u^{p''}(G)} &\leq C 2^{j2N} \Big(\int_{\mathbb{R}^{d_2}} \Big( \int_{\mathbb{R}^{d_1}} ||f|*k_1(x,u)|^{p_1'} \ dx \Big)^{p_1''/p_1'} \ du \Big)^{1/p_1''} \\
    & \hspace{2cm} \times \Big(\int_{\mathbb{R}^{d_2}} \Big( \int_{\mathbb{R}^{d_1}} ||g|*k_2(x,u)|^{p_2'} \ dx \Big)^{p_2''/p_2'} \ du \Big)^{1/p_2''} \\
    &\leq C 2^{j2N} \|k_1\|_{L^1(G)} \|k_2\|_{L^1(G)} \|f\|_{L_x^{p_1'}L_u^{p_1''}(G)} \|g\|_{L_x^{p_2'}L_u^{p_2''}(G)} .
\end{align*}
The $L^1$-norm of $k_1$ and $k_2$ can be estimated as follows.
\begin{align*}
    \|k_1\|_{L^1(G)} \lesssim \int_{|y| > 2^{j(1+\varepsilon)}} \int_{|t| \leq 2^{2j(1+\varepsilon)}} \frac{dy\, dt}{(1+|y|)^N} \leq C 2^{-j (N-d_1-2d_2)(1+\varepsilon)} ,
\end{align*}
and
\begin{align*}
    \|k_2\|_{L^1(G)} \lesssim \int_{|z| \leq 2^{j(1+\varepsilon)}} \int_{|s| > 2^{j(1+\varepsilon)}} \frac{dz\, ds}{(1+|s|)^N} \leq C 2^{-j (N-d_2-d_1)(1+\varepsilon)} .
\end{align*}

Therefore, choosing $N>0$ sufficiently large and $\varepsilon>0$ sufficiently small, there exists $\delta=2N\varepsilon-(2d_1+3d_2)(1+\varepsilon)>0$ such that 
\begin{align*}
    \|\mathcal{B}^{\alpha}_{j,A_3,A_2}(f,g)\|_{L_x^{p'} L_u^{p''}(G)} &\leq C 2^{-j \delta} \|f\|_{L_x^{p_1'}L_u^{p_1''}(G)} \|g\|_{L_x^{p_2'}L_u^{p_2''}(G)} .
\end{align*}

It remains to estimate $\mathcal{B}^{\alpha}_{j,A_{1}, A_{1}}$. We again choose sequences $\{a_{n'}\}_{n' \in \mathbb{N}}$ and $\{b_{n''}\}_{n'' \in \mathbb{N}}$ (see subsection \eqref{subsection:Estimate of Bj1}) such that
\begin{align*}
    |a_{n'}-a_{m'}|>2^{j(1+\varepsilon)}/10, \quad \text{for} \quad n'\neq m' , \quad \sup_{a \in \mathbb{R}^{d_1}} \inf_{n'}|a-a_{n'}| \leq 2^{j(1+\varepsilon)}/10 ; \quad \text{and} \\
    |b_{n''}-b_{m''}|>2^{2j(1+\varepsilon)}/10 \quad \text{for} \ n'' \neq m'' , \quad \sup_{b \in \mathbb{R}^{d_2}} \inf_{n''}|b-b_{n''}| \leq 2^{2j(1+\varepsilon)}/10 .
\end{align*}

Recall that from \eqref{decomposition of kernel in mixed case}, for $(x,u) \in G$ we write
\begin{align*}
    \mathcal{K}^{\alpha}_{j,A_1, A_1}((y,t)^{-1}(x,u), (z,s)^{-1}(x,u)) =: \widetilde{\mathcal{K}^{\alpha}_{j,A_1, A_1}}((x,u),(y,t),(z,s)) ,
\end{align*}
and hence we have
\begin{align*}
    \supp{\widetilde{\mathcal{K}^{\alpha}_{j,A_1, A_1}}} \subseteq \mathcal{D}_{j}^{|\cdot|} &:= \{\big((x,u),(y,t),(z,s)\big) :\  |x-y|\leq 2^{j(1+\varepsilon)}, |u-t| \leq 2^{2j(1+\varepsilon)}, \\
    & \hspace{5cm} |x-z|\leq 2^{j(1+\varepsilon)}, |u-s| \leq 2^{2j(1+\varepsilon)} \} .
\end{align*}
Let us define $\displaystyle{S_{n'}^{|\cdot|_1,j} := \Bar{B}^{|\cdot|}(a_{n'}, \tfrac{2^{j(1+\varepsilon)}}{10}) \setminus \cup_{m < n} \Bar{B}^{|\cdot|}(a_{m'}, \tfrac{2^{j(1+\varepsilon)}}{10})}$ and similarly we also define $\displaystyle{S_{n''}^{|\cdot|_2,j} := \Bar{B}^{|\cdot|}(a_{n''}, \tfrac{2^{2j(1+\varepsilon)}}{10}) \setminus \cup_{m < n} \Bar{B}^{|\cdot|}(a_{m''}, \tfrac{2^{2j(1+\varepsilon)}}{10})}$, then we can easily see
\begin{align*}
    & \mathcal{D}_{j}^{|\cdot|} \subseteq \bigcup_{\substack{n',n'',n_1',n_1'',,n_2',n_2'': \\ |a_{n'}-a_{n_1'}|\leq 2 \cdot 2^{j(1+\varepsilon)}, |b_{n''}-b_{n_1''}|\leq 2 \cdot 2^{2j(1+\varepsilon)} \\
    |a_{n'}-a_{n_2'}|\leq 2 \cdot 2^{j(1+\varepsilon)}, |b_{n''}-b_{n_2''}|\leq 2 \cdot 2^{2j(1+\varepsilon)}}} (S_{n'}^{|\cdot|_1,j} \times S_{n''}^{|\cdot|_2,j}) \times \left((S_{n_1'}^{|\cdot|_1,j} \times S_{n_1''}^{|\cdot|_2,j}) \times (S_{n_2'}^{|\cdot|_1,j} \times S_{n_2''}^{|\cdot|_2,j}) \right) ,
\end{align*}

With the aid of this decomposition, we can write $\mathcal{B}^{\alpha}_{j,A_{1}, A_{1}}$ as 
\begin{align*}
    & \mathcal{B}^{\alpha}_{j,A_{1}, A_{1}}(f,g)(x,u) \\
    &= \sum_{n',n''=0}^{\infty} \sum_{\substack{n_1': |a_{n'}-a_{n_1'}|\leq 2 \cdot 2^{j(1+\varepsilon)}, n_1'':|b_{n''}-b_{n_1''}|\leq 2 \cdot 2^{2j(1+\varepsilon)} \\
    n_2':|a_{n'}-a_{n_2'}|\leq 2 \cdot 2^{j(1+\varepsilon)}, n_2'':|b_{n''}-b_{n_2''}|\leq 2 \cdot 2^{2j(1+\varepsilon)}}} \chi_{S_{n'}^{|\cdot|_1,j} \times S_{n''}^{|\cdot|_2,j}}(x,u) \mathcal{B}^{\alpha}_{j,A_{1}, A_{1}}(f_{n_1', n_1''}^j,g_{n_2', n_2''}^j)(x,u) ,
\end{align*}
where $f_{n_1', n_1''}^j = f \chi_{S_{n_1'}^{|\cdot|_1,j} \times S_{n_1''}^{|\cdot|_2,j}}$ and $g_{n_2', n_2''}^j = g \chi_{S_{n_2'}^{|\cdot|_1,j} \times S_{n_2''}^{|\cdot|_2,j}}$.

Before proceed further, let us make the following claim. There exist some $\epsilon_1>0$ such that
\begin{align}
\label{Expression: Assumption for mixed norms}
    \|\chi_{S_{n'}^{|\cdot|_1,j} \times S_{n''}^{|\cdot|_2,j}} \mathcal{B}^{\alpha}_{j,A_{1}, A_{1}}(f_{n_1', n_1''}^j,g_{n_2', n_2''}^j)\|_{L_x^{2/3} L_u^{1}} & \leq C 2^{-j \alpha} 2^{j \epsilon_1} 2^{j/2} 2^{j(d_1-d_2)/2} \|f_{n_1', n_1''}^j\|_{L^{1}} \|g_{n_2', n_2''}^j\|_{L^{2}} .
\end{align}

First, we assume that the claim holds for the moment and proceed to complete the estimate for $\mathcal{B}^{\alpha}_{j,A_{1}, A_{1}}$. Note that, similar to \eqref{Bounded overlapping property for j ball}, we also have the following bounded overlapping property in this context:
\begin{align}
\label{bounded overlapping for mixed norms}
    \sup_{n'} \#\{m' : |a_{n'}-a_{m'}| \leq  2 \cdot 2^{j(1+\varepsilon)}\} \leq C\  \text{and} \  \sup_{n''} \#\{m'' : |b_{n''}-b_{m''}| \leq  2 \cdot 2^{j(1+\varepsilon)}\} \leq C .
\end{align}

In the following, we adopt the following short hand notation: $\displaystyle{\sum_{n_i':} := \sum_{n_i':|a_{n'}-a_{n_i'}| \leq 2 \cdot 2^{j(1+\varepsilon)}}}$ for $i=1,2$ and also for $n_i''$. Using the fact that the sets $S_{n'}^{|\cdot|_1,j}$ and $S_{n''}^{|\cdot|_2,j}$ are disjoint, it follows that
\begin{align*}
    & \|\mathcal{B}^{\alpha}_{j,A_{1}, A_{1}} (f,g)\|_{L_x^{2/3} L_u^{1}} = \Big\|\sum_{n',n''=0}^{\infty} \sum_{n_1':,n_1'':,n_2':,n_2'':} \chi_{S_{n'}^{|\cdot|_1,j} \times S_{n''}^{|\cdot|_2,j}} \mathcal{B}^{\alpha}_{j,A_{1}, A_{1}}(f_{n_1',n_1''}^j,g_{n_2',n_2''}^j) \Big\|_{L_x^{2/3} L_u^{1}} \\
    &= \sum_{n''=0}^{\infty} \Big\{\Big[ \int_{\mathbb{R}^{d_2}} \Big(\sum_{n'=0}^{\infty} \int_{\mathbb{R}^{d_1}} \Big| \sum_{n_1':,n_1'':,n_2':,n_2'':} \chi_{S_{n'}^{|\cdot|_1,j} \times S_{n''}^{|\cdot|_2,j}} \mathcal{B}^{\alpha}_{j,A_{1}, A_{1}}(f_{n_1',n_1''}^j,g_{n_2',n_2''}^j) \Big|^{2/3} \ dx \Big)^{3/2} \ du \Big]^{2/3} \Big\}^{3/2} .
\end{align*}
Applying triangle inequality and the fact \eqref{Triangle inequality for p less than 1 case}, the last quantity can be dominated by
\begin{align*}
    & \sum_{n''=0}^{\infty} \Big\{\sum_{n'=0}^{\infty} \sum_{n_1':,n_1'':,n_2':,n_2'':} \Big[ \int_{\mathbb{R}^{d_2}} \Big( \int_{\mathbb{R}^{d_1}} \big| \chi_{S_{n'}^{|\cdot|_1,j} \times S_{n''}^{|\cdot|_2,j}} \mathcal{B}^{\alpha}_{j,A_{1}, A_{1}}(f_{n_1',n_1''}^j,g_{n_2',n_2''}^j) \big|^{2/3} \ dx \Big)^{3/2} \ du \Big]^{2/3} \Big\}^{3/2} .
\end{align*}

Consequently, by applying the claim \eqref{Expression: Assumption for mixed norms}, the quantity on the right hand side of the above term can be bounded by
\begin{align*}
    & C 2^{-j \alpha} 2^{j \epsilon_1} 2^{j/2} 2^{j(d_1-d_2)/2} \sum_{n''=0}^{\infty} \Big[\sum_{n'=0}^{\infty} \Big\{ \sum_{n_1':,n_1'':} \Big( \int_{\mathbb{R}^{d_2}} \int_{\mathbb{R}^{d_1}} | f_{n_1',n_1''}^j(y,t)| \ dy\ dt \Big)^{2/3} \\
    & \hspace{7cm} \times \sum_{n_2':,n_2'':} \Big( \int_{\mathbb{R}^{d_2}} \int_{\mathbb{R}^{d_1}} | g_{n_2',n_2''}^j(z,s)|^{2}  \ dz\ ds \Big)^{1/3} \Big\} \Big]^{3/2} .
\end{align*}

In addition, using H\"older's inequality and bounded overlapping property \eqref{bounded overlapping for mixed norms}, the above expression can be further dominated by
\begin{align*}
    & C 2^{-j \alpha} 2^{j \epsilon_1} 2^{j/2} 2^{j(d_1-d_2)/2} \sum_{n''=0}^{\infty} \Big[\sum_{n'=0}^{\infty} \Big\{\sum_{n_1'':} \int_{\mathbb{R}^{d_2}} \sum_{n_1':} \int_{\mathbb{R}^{d_1}} | f_{n_1',n_1''}^j(y,t)| \ dy\ dt \Big\}^{2/3} \\
    & \hspace{6cm} \times \Big\{\sum_{n_2'':} \int_{\mathbb{R}^{d_2}} \sum_{n_2':} \int_{\mathbb{R}^{d_1}} | g_{n_2',n_2''}^j(z,s)|^{2} \ dz \ ds \Big\}^{1/3} \Big]^{3/2} \\
    &\leq C 2^{-j \alpha} 2^{j \epsilon_1} 2^{j/2} 2^{j(d_1-d_2)/2} \sum_{n''=0}^{\infty} \Big[ \Big\{\sum_{n_1'':} \int_{\mathbb{R}^{d_2}} \sum_{n'=0}^{\infty} \sum_{n_1':} \int_{\mathbb{R}^{d_1}} | f_{n_1',n_1''}^j(y,t)| \ dy\ dt \Big\} \\
    & \hspace{6cm} \times \Big\{\sum_{n_2'':} \int_{\mathbb{R}^{d_2}} \sum_{n'=0}^{\infty} \sum_{n_2':} \int_{\mathbb{R}^{d_1}} | g_{n_2',n_2''}^j(z,s)|^{2} \ dz \ ds \Big\}^{1/2} \Big] .
\end{align*}

Again, applying bounded overlapping property \eqref{bounded overlapping for mixed norms}, we observe that the above quantity can be bounded by
\begin{align*}
    & C 2^{-j \alpha} 2^{j \epsilon_1} 2^{j/2} 2^{j(d_1-d_2)/2} 2^{j d_2(1+\varepsilon)} \Big\{\sum_{n''=0}^{\infty} \sum_{n_1'':} \int_{\mathbb{R}^{d_2}} \sum_{n'=0}^{\infty} \sum_{n_1':} \int_{\mathbb{R}^{d_1}} | f_{n_1',n_1''}^j(y,t)| \ dy\ dt \Big\}\\
    & \hspace{1cm} \sup_{n''} \Big\{\sum_{n_2'': |b_{n''}-b_{n_2''}| \leq 2 \cdot 2^{2j(1+\varepsilon)}} \sup_{s \in B^{|\cdot|}(b_{n_2''}, 2^{2j(1+\varepsilon)}/5)} \Big( \sum_{n'=0}^{\infty} \sum_{n_2':} \int_{\mathbb{R}^{d_1}} | g_{n_2',n_2''}^j(z,s)|^{2} \ dz \Big)^{\frac{1}{2} \cdot 2} \Big\}^{1/2} \\
    &\leq C 2^{-j \delta} \Big\{\int_{\mathbb{R}^{d_2}} \int_{\mathbb{R}^{d_1}} | f(x,u)| \ dx\ du \Big\} \Big\{\sup_{u} \Big( \int_{\mathbb{R}^{d_1}} | g(x,u)|^{2} \ dx \Big)^{\frac{1}{2}} \Big\} \\
    &\leq C 2^{-j \delta} \|f\|_{L_x^{1}L_u^{1}} \|g\|_{L_x^{2}L_u^{\infty}} ,
\end{align*}
where we have used as $\alpha>(d+1)/2$, we can choose $\varepsilon, \epsilon_1>0$ sufficiently small such that $\delta=\alpha-(d+1)/2+d_2 \varepsilon+\epsilon_1>0$. This completes the estimate of $\mathcal{B}^{\alpha}_{j,A_{1}, A_{1}}$, upon assuming the claim.

Note that similarly as in \eqref{Reduction to the same support ball} in order to prove \eqref{Expression: Assumption for mixed norms} enough to prove the following:
\begin{align}
\label{Expression: Assumption for mixed norms with only j case}
    \|\chi_{S_{n'}^{|\cdot|_1,j} \times S_{n''}^{|\cdot|_2,j}} \mathcal{B}^{\alpha}_{j}(f_{n_1', n_1''}^j,g_{n_2', n_2''}^j)\|_{L_x^{2/3} L_u^{1}} & \leq C 2^{-j \alpha} 2^{j \epsilon_1} 2^{j/2} 2^{j(d_1-d_2)/2} \|f_{n_1', n_1''}^j\|_{L^{1}} \|g_{n_2', n_2''}^j\|_{L^{2}} .
\end{align}
Thus, it only remains to prove claim \eqref{Expression: Assumption for mixed norms with only j case}. This can be estimated in a similar manner to claim \eqref{Claim for boundedness of Bj1} for $(p_1,p_2,p)=(1,2,2/3)$ (see subsection \ref{Proof at (1,2,2/3) for claim A}). Analogous to \eqref{decomposition in M1 for 1,2}, we first perform the following decomposition:
\begin{align*}
    & \chi_{S_{n'}^{|\cdot|_1,j} \times S_{n''}^{|\cdot|_2,j}}(x,u) \mathcal{B}^{\alpha}_{j}(f_{n_1', n_1''}^j, g_{n_2', n_2''}^j)(x,u) \\
    &= \chi_{S_{n'}^{|\cdot|_1,j} \times S_{n''}^{|\cdot|_2,j}}(x,u) \Big(\sum_{M_1=-\ell_0 }^{j} + \sum_{M_1=j+1 }^{\infty} \Big) \mathcal{B}^{\alpha}_{j, M_1}(f_{n_1', n_1''}^j, g_{n_2', n_2''}^j)(x,u) =: E_1 + E_2 ,
\end{align*}
where
\begin{align}
\label{Fourier series decomposition for mixed norm}
    \mathcal{B}^{\alpha}_{j, M_1}(f_{n_1', n_1''}^j, g_{n_2', n_2''}^j)(x,u) &= \sum_{l \in \mathbb{Z}} \phi_{j,l,M_1}^{\alpha}(\mathcal{L}, T) f_{n_1', n_1''}^j(x,u)\, \psi_{l}(\mathcal{L}) g_{n_2', n_2''}^j(x,u) .
\end{align}

\subsection{Estimate of \texorpdfstring{$E_{2}$}{}{}}
The estimate for $E_2$ can be deduced as follows. Application of H\"older's inequality, \eqref{Restriction estimate inside prop} of Proposition \ref{Proposition: First layer weighted Plancherel}, and \eqref{Convergence of sum of l using phi} yields
\begin{align*}
    \|E_2\|_{L_x^{2/3} L_u^{1}} & \leq C 2^{j d_1(1+\varepsilon)/2} \Big\|\sum_{l \in \mathbb{Z}} \sum_{M_1=j+ 1 }^{\infty} \phi_{j,l,M_1} ^{\alpha}(\mathcal{L}, T) f_{n_1', n_1''}^j \, \psi_{l}(\mathcal{L}) g_{n_2', n_2''}^j\Big\|_{L^1} \\
    &\leq C \sum_{l \in \mathbb{Z}} \sum_{M_1=j+ 1 }^{\infty} 2^{j d_1(1+\varepsilon)/2} \| \phi_{j,l, M_1}^{\alpha}(\mathcal{L}, T)f_{n_1', n_1''}^j \|_{L^2} \| \psi_l(\mathcal{L})g_{n_2', n_2''}^j \|_{L^2} \\
    &\leq C \sum_{l \in \mathbb{Z}} \sum_{M_1=j+ 1 }^{\infty} 2^{j d_1(1+\varepsilon)/2} 2^{-M_1 d_2/2} \|\phi_{j,l}^{\alpha}\|_{L^{\infty}} \| f_{n_1', n_1''}^j\|_{L^1} \| g_{n_2', n_2''}^j \|_{L^2} \\
    &\leq C 2^{j\epsilon_1} 2^{-j \alpha} 2^{j (d_1-d_2)/2} \| f_{n_1', n_1''}^j \|_{L^1} \| g_{n_2', n_2''}^j\|_{L^2} ,
\end{align*}
for some $\epsilon_1>0$.

\subsection{Estimate of \texorpdfstring{$E_{1}$}{}{}}
To estimate $E_{1}$, we first decompose only the support of $f_{n_1', n_1''}^j$ and the function $f_{n_1', n_1''}^j$ itself. For $M_1 \in \{-\ell_0, \ldots, j\}$, and in the same manner as \eqref{Expression: Decomposition of ball into smaller balls}, we decompose $S_{n_1'}^{|\cdot|_1,j}$ into disjoint sets $S_{n_1',m_1'}^{|\cdot|_1, M_1,j}$ such that $S_{n_1',m_1'}^{|\cdot|_1, M_1,j} \subseteq B^{|\cdot|}(a_{n_1', m_1'}^{M_1}, 2^{M_1(1+\varepsilon)}/5)$ and $|a_{n_1', m_1'}^{M_1}-a_{n_1', \Tilde{m}_1'}^{M_1}|>2^{M_1(1+\varepsilon)}/10$, whenever $m_1' \neq \Tilde{m}_1'$. For $\gamma>0$, we also define $\Tilde{S}_{n_1',m_1'}^{|\cdot|_1, M_1,j} := B^{|\cdot|}(a_{n_1', m_1'}^{M_1}, 2^{M_1(1+\varepsilon)} 2^{\gamma j+1}/5)$, and decompose $f_{n_1', n_1''}^j = \sum_{m_1'=1}^{N_{M_1}} f_{n_1',m_1', n_1''}^{M_1,j}$, where $f_{n_1',m_1', n_1''}^{M_1,j} = f_{n_1', n_1''}^j \chi_{S_{n_1',m_1'}^{|\cdot|, M_1,j}}$. Therefore, we break $E_1$ into two sums as
\begin{align*}
    E_{1} &= \sum_{M_1=-\ell_0 }^{j} \sum_{m_1'=1}^{N_{M_1}} \chi_{S_{n'}^{|\cdot|_1,j} \times S_{n''}^{|\cdot|_2,j}}(x,u) \chi_{\Tilde{S}_{n_1',m_1'}^{|\cdot|_1, M_1,j} \times S_{n''}^{|\cdot|_2,j}}(x,u) \mathcal{B}^{\alpha}_{j,M_1}(f_{n_1',m_1', n_1''}^{M_1,j}, g_{n_2', n_2''}^j)(x,u) \\
    &+ \sum_{M_1=-\ell_0 }^{j} \sum_{m_1'=1}^{N_{M_1}} \chi_{S_{n'}^{|\cdot|_1,j} \times S_{n''}^{|\cdot|_2,j}}(x,u) (1-\chi_{\Tilde{S}_{n_1',m_1'}^{|\cdot|_1, M_1,j} \times S_{n''}^{|\cdot|_2,j}})(x,u) \mathcal{B}^{\alpha}_{j,M_1}(f_{n_1',m_1', n_1''}^{M_1,j}, g_{n_2', n_2''}^j)(x,u) \\
    &=: E_{11} + E_{12} .
\end{align*}

\subsubsection{\textbf{Estimate of} \texorpdfstring{$E_{12}$}{}{}}
This estimate is similar to that of $S_{11}$ (see subsection \eqref{subsubsection: estimate of S11}), so we omit the details.

\subsubsection{\textbf{Estimate of} \texorpdfstring{$E_{11}$}{}{}}
Using \eqref{Fourier series decomposition for mixed norm}, let us first express $E_{11}$ as follows.
\begin{align*}
    E_{11} &= C \sum_{M_1=-\ell_0 }^{j} \sum_{l \in \mathbb{Z}} \Big\{ \sum_{m_1'=1}^{N_{M_1}} \chi_{\Tilde{S}_{n_1',m_1'}^{|\cdot|_1, M_1,j} \times S_{n''}^{|\cdot|_2,j}}(x,u) \phi_{j,l,M_1}^{\alpha}(\mathcal{L}, T) f_{n_1',m_1', n_1''}^{M_1,j}(x,u) \Big\}\{ \psi_{l}(\mathcal{L}) g_{n_2', n_2''}^j(x,u) \}.
\end{align*}
Applying the fact \eqref{Triangle inequality for p less than 1 case} and the triangle inequality, we see that
\begin{align*}
    \|E_{11}\|_{L_x^{2/3} L_u^1}^{2/3} &\leq C \sum_{M_1=-\ell_0 }^{j} \sum_{l \in \mathbb{Z}} \Big[\int_{\mathbb{R}^{d_2}} \Big( \int_{\mathbb{R}^{d_1}} | \sum_{m_1'=1}^{N_{M_1}} \chi_{\Tilde{S}_{n_1',m_1'}^{|\cdot|_1, M_1,j} \times S_{n''}^{|\cdot|_2,j}}(x,u) \phi_{j,l,M_1}^{\alpha}(\mathcal{L}, T) f_{n_1',m_1', n_1''}^{M_1,j}(x,u) \\
    &\hspace{6cm} \psi_{l}(\mathcal{L}) g_{n_2', n_2''}^j(x,u) |^{2/3} dx \Big)^{3/2} du \Big]^{2/3} .
\end{align*}

Consequently, using H\"older's inequality with respect to $x$ as well as $u$-variable, we find the quantity on the right hand side of the previous inequality is controlled by
\begin{align*}
     & C \sum_{M_1=-\ell_0 }^{j} \sum_{l \in \mathbb{Z}} \Big[\int_{\mathbb{R}^{d_2}} \Big(\int_{\mathbb{R}^{d_1}} \big|\sum_{m_1'=1}^{N_{M_1}} \chi_{\Tilde{S}_{n_1',m_1'}^{|\cdot|_1, M_1,j} \times S_{n''}^{|\cdot|_2,j}}(x,u) \phi_{j,l,M_1}^{\alpha}(\mathcal{L}, T) f_{n_1',m_1', n_1''}^{M_1,j}(x,u) \big| dx \Big)^{2} du \Big]^{\frac{1}{2} \cdot \frac{2}{3}} \\
     & \hspace{6cm} \times \Big[\int_{\mathbb{R}^{d_2}} \int_{\mathbb{R}^{d_1}} | \psi_{l}(\mathcal{L}) g_{n_2', n_2''}^j(x,u) |^2 dx \  du \Big]^{\frac{1}{2} \cdot \frac{2}{3}} .
\end{align*}
Notice that $|\Tilde{S}_{n_1',m_1'}^{|\cdot|_1, M_1,j}| \leq C 2^{M_1 d_1 (1+\varepsilon)} 2^{C\gamma j}$. Applying H\"older's inequality and \eqref{Restriction estimate inside prop} of Proposition \ref{Proposition: First layer weighted Plancherel}, we see that the above expression is dominated by
\begin{align}
\label{InE11 last expression befor phi estimate}
     & C \sum_{M_1=-\ell_0 }^{j} \sum_{l \in \mathbb{Z}} \Biggl[\sum_{m_1'=1}^{N_{M_1}} 2^{M_1 d_1(1+\varepsilon)/2} 2^{C \gamma j} \| \phi_{j,l,M_1}^{\alpha}(\mathcal{L}, T) f_{n_1',m_1', n_1''}^{M_1,j} \|_{L^2} \Biggr]^{2/3} \| g_{n_2', n_2''}^j \|_{L^2}^{2/3} \\
     &\nonumber\leq C \sum_{M_1=-\ell_0 }^{j} \sum_{l \in \mathbb{Z}} \Big[2^{M_1 d_1/2} 2^{C \gamma j} 2^{-M_1 d_2/2} \|\phi_{j,l}^{\alpha}\|_{L^{\infty}} \sum_{m_1'=1}^{N_{M_1}} \| f_{n_1',m_1', n_1''}^{M_1,j} \|_{L^1} \Big]^{2/3} \| g_{n_2', n_2''}^j \|_{L^2}^{2/3} .
\end{align}
Using the fact stated in (\ref{Convergence of sum of l using phi}), we can see that
\begin{align}
\label{sum of l for 1,2}
    \sum_{l \in \mathbb{Z}} \|\phi_{j,l}^{\alpha}\|_{L^{\infty}}^{2/3} &\leq \sum_{l \in \mathbb{Z}} \frac{1}{(1+|l|)^{1+\varepsilon}} \{(1+|l|)^{3/2+3\varepsilon/2} \|\phi_{j,l}^{\alpha}\|_{L^{\infty}} \}^{2/3} \leq C 2^{-j 2\alpha/3} 2^{j(1/3+\varepsilon)} .
\end{align}
With the help of the above fact, the expression in \eqref{InE11 last expression befor phi estimate} is further dominated by
\begin{align*}
    C 2^{\epsilon_1 j} 2^{-j 2 \alpha /3} 2^{j/3} 2^{j(d_1-d_2)/3} \| f_{n_1', n_1''}^{j} \|_{L^1}^{2/3} \| g_{n_2', n_2''}^j \|_{L^2}^{2/3} ,
\end{align*}
where we have used $\sum_{M_1=-\ell_0 }^{j} 2^{(M_1-j) (d_1-d_2)/3} \leq C$, since $d_1>d_2$ in case of M\'etivier groups.

Hence, we obtain
\begin{align*}
    \|E_{11}\|_{L_x^{2/3} L_u^1} &\leq C 2^{\epsilon_1 j} 2^{-j \alpha} 2^{j/2} 2^{j (d_1-d_2)/2} \| f_{n_1', n_1''}^{j} \|_{L^1} \| g_{n_2', n_2''}^j \|_{L^2} .
\end{align*}

This completes the proof of claim \eqref{Expression: Assumption for mixed norms with only j case}, and with it, the proof of Theorem \ref{Theorem: Mixed norm estimate for first layer} is also concluded. \qed

\section*{Acknowledgments}
The first and third author would like to acknowledge the support provided by the FIST grant (SR/FST/MS-II/2019/51) at IISER Kolkata provided by the Government of India. The second author gratefully acknowledges the financial support provided by the NBHM Post-doctoral fellowship, DAE, Government of India. The third author would like to acknowledge the support of the Prime Minister's Research Fellows (PMRF) supported by Ministry of Education, Government of India. The authors are indebted to the anonymous referee for insightful and valuable suggestions which helped to remarkably improve the paper.


\providecommand{\bysame}{\leavevmode\hbox to3em{\hrulefill}\thinspace}
\providecommand{\MR}{\relax\ifhmode\unskip\space\fi MR }
\providecommand{\MRhref}[2]{%
  \href{http://www.ams.org/mathscinet-getitem?mr=#1}{#2}
}
\providecommand{\href}[2]{#2}

\end{document}